\theoremstyle{plain}
\newtheorem{theo}[equation]{Theorem}
\newtheorem{crit}[equation]{Criterion}
\newtheorem{lem}[equation]{Lemma}
\newtheorem{cor}[equation]{Corollary}
\newtheorem{prop}[equation]{Proposition}
\theoremstyle{definition}
\newtheorem{Def}[equation]{Definition}
\newtheorem{Rem}[equation]{Remark} 
\newtheorem{Not}[equation]{Notation}
\newcommand{\F}{{\mathbb F}}
\newcommand{\Q}{{\mathbb Q}}
\newcommand{\W}{{\mathbb W}}
\newcommand{\Z}{{\mathbb Z}}
\newcommand{\bfa}{{\mathbf a}}
\newcommand{\bfr}{{\bf r}}
\newcommand{\bfs}{{\bf s}}
\newcommand{\bft}{{\bf t}}
\newcommand{\Kappa}{\boldsymbol \kappa}
\newcommand{\Mu}{\boldsymbol \mu}
\newcommand{\ga}{{\mathfrak a}}
\newcommand{\gb}{{\mathfrak b}}
\newcommand{\gE}{{\mathfrak E}}
\newcommand{\gf}{{\mathfrak f}}
\newcommand{\gh}{{\mathfrak h}}
\newcommand{\gm}{{\mathfrak m}}
\newcommand{\gp}{{\mathfrak p}}
\newcommand{\gP}{{\mathfrak P}}
\newcommand{\gq}{{\mathfrak q}}
\newcommand{\gR}{{\mathfrak R}}
\newcommand{\cC}{{\mathcal C}}
\newcommand{\cD}{{\mathcal D}}
\newcommand{\cE}{{\mathcal E}}
\newcommand{\cI}{{\mathcal I}}
\newcommand{\cO}{{\mathcal O}}
\newcommand{\cP}{{\mathcal P}}
\newcommand{\cR}{{\mathcal R}}
\newcommand{\cS}{{\mathcal S}}
\newcommand{\cU}{{\mathcal U}}
\newcommand{\cV}{{\mathcal V}}
\newcommand{\cW}{{\mathcal W}}
\newcommand{\Frob}{{\rm Frob}}
\newcommand{\rL}{{\rm L}}
\newcommand{\rM}{{\rm M}}
\newcommand{\rF}{{\rm F}}
\newcommand{\rR}{{\rm R}}
\newcommand{\rO}{{\rm O}}
\newcommand{\rV}{{\rm V}}
\newcommand{\wtk}{\widetilde{k}}
\newcommand{\wtF}{\widetilde{F}}
\newcommand{\wtL}{\widetilde{L}}
\newcommand{\un}[1]{\underline{#1}}
\newcommand{\Aut}{\operatorname{Aut}}
\newcommand{\CW}{\operatorname{CW}}
\newcommand{\disc}{\operatorname{disc}}
\newcommand{\End}{\operatorname{End}}
\newcommand{\Ext}{\operatorname{Ext}}
\newcommand{\rk}{\operatorname{rank}}
\newcommand{\Gal}{\operatorname{Gal}}
\newcommand{\Hom}{\operatorname{Hom}}
\newcommand{\SP}{{\rm Sp}}
\newcommand{\res}{\operatorname{res}}
\newcommand{\ord}{\operatorname{ord}}
\newcommand{\0}{\vec{0}}
\newcommand{\CWplus}{\, \dot{+} \, }
\newcommand{\ov}[1]{\overline{#1}}
\newcommand{\lr}[1]{\langle{#1}\rangle}
\newcommand{\vv}[1]{\vert {#1} \vert}
\newcommand{\fdeg}[2]{[{#1}\!:\!{#2}]}
\renewcommand*\l@subsection{\@tocline{2}{0pt}{30pt}{0pt}{}}
\author[A. Brumer]{Armand Brumer}
\address{Department of Mathematics, Fordham University, Bronx, NY 10458, USA}
\email{brumer@fordham.edu}
\author[K. Kramer]{Kenneth Kramer*}
\address{Department of Mathematics, Queens College (CUNY), Flushing, NY 11367, USA;  Department of Mathematics, The Graduate Center of CUNY, New York, NY 10016, USA}
\email{kkramer@qc.cuny.edu}
\begin{document}
\title{Hyperelliptic $\mathcal{S}_7$-Curves of Prime Conductor}

\subjclass[2010]{Primary 11G10; Secondary 14K15, 11R37, 11S31}

\begin{abstract}
An abelian threefold $A_{/\Q}$ of prime conductor $N$ is {\em favorable} if its 2-division field $F$ is an $\cS_7$-extension over $\Q$ with ramification index 7 over $\Q_2$.   Let $A$ be favorable and let $B$ be a semistable abelian variety of conductor $N^d$ with $B[2]$ filtered by $d$ copies of $A[2]$.  We obtain a class field theoretic criterion on $F$ to guarantee that $B$ is isogenous to $A^d$ and {\em a fortiori}, $A$ is unique up to isogeny.
\end{abstract}

\maketitle 
\tableofcontents

\numberwithin{equation}{section}

\section{Introduction} \label{Intro}

Schoof \cite{Sch1,Sch2,Sch3} describes the semistable abelian varieties $A$ over $\Q$ with
good reduction outside the set $S$ of primes dividing a small odd integer $N$. This is done by studying the group schemes over $\Z[1/N]$ that could occur in $A[2]$ and their field of points. Grothendieck and Fontaine give upper bounds for the root discriminants of those fields and when $N\le 37$, Odlyzko provides a bound for their degree.  Then class field theory and group theory allow the determination of all such simple group schemes and their extensions by one another.

To treat larger conductors $N$, one must restrict the simple group schemes that are permitted.  Let $N$ be squarefree, let $p$ be a prime not dividing $N$ and let $\cE$ be an absolutely  simple group scheme of exponent $p$ over $\Z[1/N]$. When is there at most one isogeny class of  abelian varieties $A$ of conductor $N$ such that $A[p]\simeq \cE$? In \cite{BK5}, using algebraic number theoretic criteria, we  proved such  a uniqueness result for many isogeny classes of abelian surfaces of prime conductor.  Here we treat the  uniqueness  question for the following threefolds.
 
\begin{Def}
An abelian threefold  $A$ of prime conductor $N$ is {\em favorable} if $F = \Q(A[2])$ is the Galois closure of a field $F_0$ of degree 7 such that 2 is totally ramified in $F_0/\Q$ and the discriminant of $F_0$ is $\pm 2^{6} N$.  
\end{Def}

Assume that $A$ is favorable and let $F = \Q(A[2])$. Up to isomorphism, there is a unique subfield $K$ of $F$ of degree 21 over $\Q$.  Let $\Omega_K$ be the maximal elementary 2-extension of $K$ unramified outside $\{2,\infty\}$  with ray class conductor exponent $\gf_\gp(\Omega_K/K) \le 6$ for the unique prime $\gp$ over 2 in $K$. 
   Suppose that $L \ne F$ is the Galois closure over $\Q$ of a quadratic extension of $K$ inside $\Omega_K$.   We show in \S \ref{3Folds} that $\Gal(L/\Q)$ is isomorphic to one of three groups $G_a$, with $\vv{G_a} = 7! \cdot 2^a$ and $a = 6, 14, 20$. We say that $F$ is {\em amiable} if one of the following conditions holds: 
\begin{enumerate} [{\rm i)}] 
\item $\Omega_K = K$,   \vspace{2 pt}
\item $\fdeg{\Omega_K}{K}=2$ and $\gf_\gp(\Omega_K/K) =6$,   \vspace{2 pt}
\item $\fdeg{\Omega_K}{K}=2$, $\gf_\gp(\Omega_K/K) =4$ and the Galois closure of $\Omega_K/\Q$ has group $G_6$.
\end{enumerate}

\begin{theo} \label{MainThm}
Let $A$ be a favorable abelian threefold of prime conductor $N$ such that $F=\Q(A[2])$ is amiable. If $B$ is a semistable abelian variety of  conductor $N^d$, with $B[2]$  filtered by $d$  copies of $A[2]$, then $B$ is isogenous to $A^d$.  In particular, if $B$ is an abelian threefold of conductor $N$ with $B[2]$  isomorphic to $A[2]$, then $B$ is isogenous to $A$. 
\end{theo}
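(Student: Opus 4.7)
The strategy is to work in the category $\cC$ of finite flat commutative $2$-group schemes over $\Z[1/N]$ whose generic fibers are semistable at $N$ and whose $2$-adic local structure matches that of $A[2^\infty]$. By hypothesis $A[2]\in\cC$, and each $B[2^n]$ lies in $\cC$ and inherits, via the exact sequences $0\to B[2^n]\to B[2^{n+1}]\to B[2]\to 0$, a filtration by copies of $A[2]$. The plan is to show $T_2(B)\cong T_2(A)^d$ as $G_\Q$-modules and then invoke Faltings' isogeny theorem to conclude that $B$ is isogenous to $A^d$; the last sentence of the theorem is the case $d=1$.

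The technical heart is the control of $\Ext^1_\cC(A[2],A[2])$ by class field theory of the pair-resolvent $K$. The module $A[2]$ is absolutely irreducible over $\F_2$, being the $6$-dimensional standard representation of $\cS_7=\Gal(F/\Q)$ on $\F_2^7/\F_2\cdot(1,\ldots,1)$, so extensions of $A[2]$ by $A[2]$ correspond to classes in $H^1(G_\Q,\End_{\F_2}(A[2]))$ with prescribed local conditions. Using that $\Gal(F/K)$ is the centralizer of a transposition in $\cS_7$, these cohomology classes restrict to a $2$-extension problem over $K$. Combining the semistability constraint at $N$ (forcing unramifiedness of the relevant extensions of $K$ outside $\{2,\infty\}$) with the local conductor bound at the unique prime $\gp$ above $2$ (at most $6$, inherited from the structure of $A[2]$ at $2$), such extensions are parametrized precisely by subfields of $\Omega_K$.

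Under the amiability hypothesis, $\Omega_K$ is severely constrained: either $\Omega_K=K$, in which case $\Ext^1_\cC(A[2],A[2])=0$, or $[\Omega_K:K]=2$ and the Galois closure over $\Q$ has the restricted form of case (i) or (ii). In each scenario the candidate nontrivial extension classes either do not arise from a $B$ of conductor $N^d$ (the groups $G_{14}$ and $G_{20}$ being incompatible with this conductor) or produce only the diagonal class already present in $A[2^\infty]$. Using this control, one shows by induction on $n$ that the extension class of $0\to B[2^n]\to B[2^{n+1}]\to B[2]\to 0$ matches that of $0\to A[2^n]^d\to A[2^{n+1}]^d\to A[2]^d\to 0$, yielding compatible isomorphisms $B[2^n]\cong A[2^n]^d$ in $\cC$. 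Passing to the inverse limit gives $T_2(B)\cong T_2(A)^d$ as $G_\Q$-modules, and Faltings' theorem then shows $B$ is isogenous to $A^d$.

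The main obstacle is the class field theoretic step: identifying the exact local conditions defining $\cC$ (in particular the $2$-adic conductor bound and the semistable shape at $N$), translating them into a Selmer-type subgroup of a ray class group of $K$, and verifying that amiability is exactly what makes this subgroup small enough. The distinction among $G_6$, $G_{14}$, $G_{20}$ is decisive here: one must show that only the $G_6$ case is compatible with a semistable $B$ of conductor $N^d$, which requires a careful analysis of the ramification behavior of the candidate quadratic extension of $K$ at primes above both $2$ and $N$ and its interaction with the inertia action of $G_{\Q_N}$ on $B[2^n]$.
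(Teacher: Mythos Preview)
Your overall architecture---reduce to vanishing of $\Ext^1$ in an appropriate category, then invoke the Faltings-based criterion of \cite{BK5}---matches the paper.  However, your identification of \emph{where} the obstructions die is incorrect, and this misidentification hides the main technical content of the argument.

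You write that $G_{14}$ and $G_{20}$ are ``incompatible with this conductor'' $N^d$, and later that ``only the $G_6$ case is compatible with a semistable $B$ of conductor $N^d$.''  This is false.  All of the groups $G_a$ for $a\in\{6,14,20,29,35\}$ arise from Galois modules satisfying the semistability and conductor-at-$N$ constraints (the conditions {\bf G1}--{\bf G3} of \S\ref{3Folds}); none is ruled out by the behavior at $N$.  The elimination happens entirely at the prime $2$, and it depends on a distinction your proposal elides: a $G_a$-class is an extension of Galois modules, but a class in $\Ext^1_{[2],\un{E}}(\cE,\cE)$ must in addition prolong to a finite flat group scheme over $\Z_2$.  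The paper's Proposition~\ref{Cond4} shows that this prolongation---checked by matching the restriction of the global cocycle to a decomposition group at $2$ against the \emph{Honda characters} classified in \S\ref{LocalCorners}---forces strictly sharper conductor bounds than the generic bound $\gf_\gp\le 6$: one gets $\gf_\gp(L_6/F)\le 2$, $\gf_\gp(L_{14}/F)\le 4$, and $\gf_\gp(L_{29}/F)\le 4$.  These refined bounds, not any incompatibility at $N$, are precisely what amiability is engineered to exploit.  For instance, in amiability case~(ii) there \emph{is} a $G_6$-class (the one attached to $\Omega_K$), but it has $\gf_\gp=4$, so by the refined bound it cannot come from a group scheme over $\Z[1/N]$; it is \emph{not} that this class ``produces only the diagonal class.''  The correct conclusion is that amiability forces $\Ext^1_{[2],\un{E}}(\cE,\cE)=0$ outright, after which Criterion~\ref{Bcrit} applies directly with no further induction on $B[2^n]$ needed on your part.
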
 

Despite our title, the threefold $A$ need not even be principally polarized. However, Jacobians do provide the simplest examples.  Let $f$ be a monic polynomial of degree 7 in $\Z[x]$ and let $J$ be the Jacobian of the hyperelliptic curve $C\!: \, y^2 + y = f(x)$.  Then $C$ has good reduction at 2 and so $J[2]$ is a finite flat group scheme over $\Z_2$.  If $\theta$ is a root of $h(x) = 1 + 4f(x)$, then $F = \Q(J[2])$ is the Galois closure of $F_0 = \Q[\theta]$.  When the discriminant of $h(x)$ is $\pm 2^{6} N$, with $N$ a prime, $J$ is a favorable threefold.

In \S \ref{data}, we report on a range of examples which suggest  that there might be infinitely many abelian threefolds with $\Q(A[2])$ amiable. Magma \cite{MAG} was used for those computations and to verify some of the group theoretical assertions in \S \ref{3Folds}.

\section{Overview}  \label{Overview}

Let $\ov{K}$ denote an algebraic closure of the field $K$.  When $R$ be a Dedekind domain with quotient field $K$, we use calligraphic letters for finite flat group schemes $\cV$ over $R$ and the corresponding Roman letter for the Galois module $V=\cV(\ov{K})$. The order of $\cV$ is the rank over $R$ of its affine algebra, or equivalently the order of the finite abelian group $V$.   As usual, $\Mu_n$ is the group scheme of $n$-th roots of unity. 

The next two sections contain the local theory of the relevant group schemes and comprise the most delicate part of our paper.   We treat simple 3-dimensional biconnected group schemes $\cE_i$ over the ring of integers of an unramified extension $K$ of $\Q_p$, to exhibit the dependence on $p$.  In \S \ref{Honda}, we describe the Honda system of  $\cE_i$.  Then we classify the Honda systems of the extensions $\cV$ of $\cE_i$ by $\cE_j$ of exponent $p$ for $1 \le i,j \le 2$.  

 In \S \ref{LocalFieldofPoints}, we carefully determine the field of points for each of the group schemes above.  We obtain the bound $p^2+p$ for the conductor exponent of the elementary abelian $p$-extension $K(V)/K(E_i,E_j)$ using a new conductor evaluation in Appendix \ref{ArtinSchreier}, perhaps of independent interest.  This improvement on Fontaine's general bound for group schemes of exponent $p$ relies on the biconnectedness of $\cE_i$ and is critical for our applications.  
 
The more precise local analysis needed to compare with the global results, is restricted to  $p=2$ and $K = \Q_2$.  Then $\Q_2(V)$ is contained in the maximal elementary 2-extension $T$ of $F = \Q_2(E_1,E_2)$ of conductor exponent $6=2^2+2$.  The extension class $[V]$ of $E_i$ by $E_j$ as local Galois modules gives rise to a cohomology class in 
$$
H^1(\Gal(T/\Q),\Hom_{\F_2}(E_i,E_j)).
$$
Let $\Delta = \Gal(F/\Q_2)$ and $\rR = \F_2[\Delta]$.   Using local reciprocity, we determine the $\rR$-module structure of $\Gal(T/F)$ and thereby control the cohomology classes associated to extensions $V$.

\vspace{5 pt}

We next introduce our generalization of favorability for arbitrary $g$.  The arithmetic of favorable fields, with applications to abelian varieties, is developed in  \S \ref{FavFieldsSection}.  
 
\begin{Def}  \label{FavField} 
Let $N$ be odd and square-free.  
\begin{enumerate}[\, i)]
\item  A field $F_0$ of degree $2g+1$ over $\Q$ is {\em favorable} if its discriminant has the form $\pm 2^{2g} N$ and the prime over 2 in $F_0$ is totally ramified.  \vspace{2 pt}
\item An abelian variety of dimension $g$ will also be called {\em favorable} if $\Q(A[2])$ is the Galois closure of a favorable field of degree $2g+1$. 
\end{enumerate}
\end{Def}

\noindent For the proof of Theorem \ref{MainThm}, we need the following concepts.

\begin{Def} \label{semistable}
Let $N$ be odd.  A 2-primary finite flat group scheme $\cV$ over $\Z[1/N]$ is {\em semistable} if, for each prime $q$ over $N$ in $\Q(V)$, the inertia group $\lr{\sigma_\gq}$ in $\Gal(\Q(V)/\Q)$ is cyclic and $(\sigma_\gq-1)^2$ annihilates the Galois module $V=\cV(\ov{\Q})$.  \vspace{2 pt}
\end{Def}
 
Fix an odd prime  $N$ and a simple semistable finite flat group scheme $\cE$ over $R =  \Z[1/N]$.  In \S \ref{3Folds}, as a special case of \cite[\S 3]{BK5}, we consider the category $\un{E}$ of finite flat group schemes $\cV$ over $R$ satisfying: 
\begin{enumerate} [{\bf \hspace{3 pt} E1.}]
\item Each composition factor of  $\cV$ is isomorphic to $\cE$.  \vspace{2 pt}
\item The group scheme $\cV$ is semistable over $N$.  \vspace{2 pt}
\item If $d$ is the multiplicity of $E$ in the semi-simplification $V^{ss}$ of $V$, then the two Artin conductor exponents at $N$ satisfy
$
\gf_N(V) = \gf_N(V^{ss}) = d.
$
\end{enumerate}

\vspace{5 pt}

The proof of Theorem \ref{MainThm} now reduces to showing that $\Ext^1_{[2], \un{E}}(\cE,\cE) = 0$, i.e., in the category $\un{E}$, all extensions $\cV$ of $\cE$ by $\cE$ such that $2\cV = 0$ are split.  By the local analysis above, the conductor exponent of the elementary 2-extension $\Q_2(V)/\Q_2(E)$ is at most 6.  Semistability at $N$ is used to deduce that, for some  $a$ in $\{0,6,14,20,29,35 \}$, $\Gal(\Q(V)/\Q)$ is isomorphic to the group $G_a$ of order $2^a \cdot 7!$ defined in Notation \ref{Ca}.  Proposition \ref{Cond4} gives better conductor bounds over 2 for certain $a$'s, based on \S\ref{LocalCorners}.  The improved conductor bounds and Proposition \ref{MakeGaFields} motivate the definition of amiability.  By Proposition \ref{ext2}, amiability implies the vanishing of $\Ext^1_{[2],\un{E}}(\cE,\cE)$ and Theorem \ref{MainThm} follows. 

Finally, we review some standard notation and facts from representation theory needed in both the local and global setting. 

\subsection*{Parabolics}  \label{Parabolics}

Let $G$ be a group and let $\rho_{E}$ be the representation afforded by a $G$-module $E$ over a field $k$.  The exact sequence of $k[G]$-modules 
$$
0 \to E_1 \xrightarrow{\iota} V \xrightarrow{\pi} E_2 \to 0
$$ 
gives rise to a cocycle $\psi_V\!: G \to \gh$ such that 
\begin{equation} \label{RepFromPsi}
\rho_V(g) = \left[\begin{matrix} \rho_{E_1}(g)  \, & \,  \psi_V(g) \, \rho_{E_2}(g) \\ 0 \, & \, \rho_{E_2}(g) \end{matrix} \right]
\end{equation}
where  $g$ in $G$ acts on $m$ in $\gh = \Hom_k(E_2,E_1)$ by $g(m) = \delta_1 m \delta_2^{-1}$, with $\delta_i = \rho_{E_i}(g)$.  The class $[V]$ in $\Ext^1_{k[G]}(E_2,E_1)$ corresponds to that of $[\psi_V]$ in $H^1(G,\gh)$.  Indeed, if $x$ is in $E_2$ and $s $ in $\Hom_k(E_2,V)$ is any section of $\pi$, then $g(s(g^{-1}x))-s(x) = \iota(y)$ for some $y$ in $E_1$ and one can take $\psi_V(g)(x) = y$.  Thus,  in the category of $k[G]$-modules, the extension classes of $E_2$ by $E_1$ under Baer sum form a group isomorphic to $H^1(G,\gh)$.   If $N$ is a normal subgroup of $G$ contained in $\ker \rho_V$, then $[\psi_V]$ comes by inflation from a unique class in $H^1(G/N,\gh)$ which we also denote by $[\psi_V]$. 

The image $\rho_V(G)$ lies in a {\em parabolic} matrix group
\begin{equation} \label{defP}
\cP = \cP_{E_1, E_2} = \left\{g = \left[\begin{smallmatrix} \delta_1 & m \\ 0 & \delta_2 \end{smallmatrix} \right] \, \vert \, \delta_i = \rho_{E_i}(g), \, m \in M_{n_1,n_2}(k) \right\}
\end{equation}
with $n_i = \dim_k E_i$.  If $H_i = \{ g \in G \, \vert \, g_{\vert E_i} = 1 \}$ and $\Delta_i = G/H_i$, then $E_i$ is a faithful $k[\Delta_i]$-module.  Any normal subgroup $H$ of $G$ acting trivially on both $E_1$ and $E_2$ satisfies
$$ 
\rho_V(H) \, \subseteq \,  \left\{\left[\begin{smallmatrix} 1 & m \\ 0 & 1 \end{smallmatrix} \right] \in \cP \, \vert \, m \in M_{n_1,n_2}(k) \right\}.
$$
The inflation-restriction sequence gives:
\begin{equation} \label{InfRes}
0 \to H^1(G/H, \gh) \xrightarrow{\rm inf} H^1(G,\gh) \xrightarrow{\rm res} H^1(H,\gh)^{G/H}.
\end{equation}

Suppose that $E_1 = E_2 = E$ and let $V$ represent a class in $\Ext^1_{k[G]}(E,E)$. 
Let $\Delta = \rho_E(G)$ and let $\pi\!: \cP \to \Delta$ be the projection $\left[\begin{smallmatrix} \delta & h \\ 0 & \delta \end{smallmatrix} \right] \mapsto \delta$.    Note that $\pi \rho_V = \rho_E$.

\begin{lem} \label{CocycleRep}
Let $\psi_V$ and $\psi_W$ correspond to representatives $V$ and $W$ for classes in $\Ext^1_{k[G]}(E,E)$ as in \eqref{RepFromPsi}.  Then $[W] = [V]$ if and only if
\begin{equation} \label{conj}
\rho_W(g) = \left[\begin{smallmatrix} 1  &  m  \\ 0 &1 \end{smallmatrix} \right] \rho_V(g) \left[\begin{smallmatrix} 1 & -m  \\ 0 & \hspace{5 pt} 1 \end{smallmatrix} \right].
\end{equation}
for some $m$ in $\gh$ and all $g$ in $G$.  If $\rho_W(g) = \alpha \, \rho_V(g) \, \alpha^{-1}$ for some $\alpha$ in $\cP$ and all $g$ in $G$ and the center of $\Delta$ is trivial, then $\alpha = \left[\begin{smallmatrix} 1  &  m  \\ 0 &1 \end{smallmatrix} \right]$ for some $m$ in $\gh$ and $[W] = [V]$.\end{lem}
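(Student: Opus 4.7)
The proof is essentially a direct block-matrix calculation coupled with the standard identification $\Ext^1_{k[G]}(E,E) \simeq H^1(G,\gh)$ already reviewed earlier in the appendix.

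For the first assertion, my plan is to unwind both sides of \eqref{conj} in block form. Writing $\delta = \rho_E(g)$ and using the formula \eqref{RepFromPsi}, a direct multiplication gives
\[
\left[\begin{smallmatrix} 1 & m \\ 0 & 1 \end{smallmatrix}\right]
\left[\begin{smallmatrix} \delta & \psi_V(g)\delta \\ 0 & \delta \end{smallmatrix}\right]
\left[\begin{smallmatrix} 1 & -m \\ 0 & 1 \end{smallmatrix}\right]
= \left[\begin{smallmatrix} \delta & \bigl(\psi_V(g) + m - \delta m \delta^{-1}\bigr)\delta \\ 0 & \delta \end{smallmatrix}\right].
\]
Recalling that $G$ acts on $\gh = \Hom_k(E,E)$ by $g\cdot m = \delta m \delta^{-1}$, the extra off-diagonal term is precisely the value $-(\partial m)(g)$ of the coboundary attached to $m$. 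So \eqref{conj} holds for some $m$ if and only if $\psi_W - \psi_V$ is a coboundary, i.e.\ $[\psi_W] = [\psi_V]$ in $H^1(G,\gh)$, which by the bijection recalled at the start of the appendix is equivalent to $[W] = [V]$.

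For the second assertion, I would write a general $\alpha \in \cP$ in block form as $\alpha = \left[\begin{smallmatrix} \epsilon_1 & n \\ 0 & \epsilon_2 \end{smallmatrix}\right]$, compute $\alpha \rho_V(g)\alpha^{-1}$ block by block, and match the diagonal blocks against those of $\rho_W(g)$. Since $\pi\rho_W = \pi\rho_V = \rho_E$, both diagonal blocks of $\rho_W(g)$ equal $\delta = \rho_E(g)$, so the matching forces $\epsilon_i \delta \epsilon_i^{-1} = \delta$ for every $\delta \in \Delta$ and $i = 1,2$. Thus each $\epsilon_i$ lies in the centre of $\Delta$, which by hypothesis is trivial, so $\epsilon_1 = \epsilon_2 = 1$ and $\alpha = \left[\begin{smallmatrix} 1 & n \\ 0 & 1 \end{smallmatrix}\right]$. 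Applying the first part with $m = n$ gives $[W] = [V]$.

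The only non-mechanical point is recognising, in the second part, that the hypothesis on $\Delta$ is needed exactly to eliminate the ambiguity caused by the diagonal part of $\alpha$; everything else is bookkeeping. There is no serious obstacle — the main care is simply to be consistent with the convention \eqref{RepFromPsi} (where the cocycle is twisted on the right by $\rho_E(g)$) so that the cohomological content of the coboundary is read off correctly.
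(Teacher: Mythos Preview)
Your proof is correct and follows essentially the same approach as the paper's. One minor remark: in this appendix the group $\cP$ (for $E_1 = E_2 = E$) consists of matrices with \emph{equal} diagonal blocks, so any $\alpha \in \cP$ already has the form $\left[\begin{smallmatrix} a & m \\ 0 & a \end{smallmatrix}\right]$; your allowance of distinct $\epsilon_1,\epsilon_2$ is more general than needed, though of course the argument still goes through.
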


\begin{proof}
Formula \eqref{conj} is equivalent to $\psi_W(g) = \psi_V(g) + m - g(m)$, so  $\psi_W-\psi_V$ is a coboundary for $H^1(G,\gh)$.   Suppose, instead that $\rho_W(g) = \alpha \, \rho_V(g) \, \alpha^{-1}$ for all $g$ in $G$ and set $\alpha =  \left[\begin{smallmatrix}  a  &  m  \\ 0 & a \end{smallmatrix} \right]$.  By multiplying out the matrices, we find that $a$ is in the center $Z(\Delta)$, so $a = 1$ and then \eqref{conj} implies that $[W] = [V]$.
\end{proof}

\section{Some Honda systems} \label{Honda}  \numberwithin{equation}{section}

In this section, we construct the Honda systems associated to group schemes over $\Z_2$ occurring in $A[2]$ when $A$ is a favorable abelian threefold.  To begin, we review some basic material on Honda systems found in \cite{BrCo,Con2,Fon3}.   For a perfect field $k$  of characteristic $p$, let $\W = \W(k)$ be the ring of Witt vectors and $K$ its field of fractions.   Let $\sigma:\W \to \W$  be the Frobenius automorphism characterized by $\sigma(x)\equiv x^p  \pmod{p}$ for $x$ in $\W$. The Dieudonn\'e ring $D_k=\W[\rF,\rV]$ is generated by the Frobenius operator $\rF$ and Verschiebung operator $\rV$. We have $\rF\rV=\rV\rF=p$, $\rF a=\sigma(a)\rF$ and $\rV a=\sigma^{-1}(a)\rV$ for all $a$ in $\W$. 

 A {\em finite Honda system} over $\W$ is a pair $(\rM,\rL)$ consisting of a left $D_k$-module $\rM$ of finite  $\W$-length and a $\W$-submodule $\rL$ with $\rV\!: \rL\to \rM$ injective and the induced map $\rL/p\rL\to \rM/\rF\rM$ an isomorphism. If $\rF$ is nilpotent on $\rM$, then $(\rM,\rL)$ is said to be {\em connected}.  Morphisms are defined in the obvious manner. 
 
\begin{lem} \label{pM}
Let $(\rM,\rL)$ be a Honda system of exponent $p$.  Then $\rM=\rL + \rF\rM$ is a direct sum, $\ker \rF=\rV\rL=\rV\rM$, $\dim \ker \rF = \dim \rL$ and $\ker \rV=\rF\rM$.
\end{lem}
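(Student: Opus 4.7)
The plan is to exploit three ingredients in sequence: the isomorphism axiom forces a direct sum decomposition, the identity $\rF\rV=\rV\rF=p$ vanishes because the exponent is $p$, and then a length count pins down all remaining inclusions as equalities.

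First, since $p\rM=0$ we also have $p\rL=0$, so the defining isomorphism $\rL/p\rL \xrightarrow{\sim} \rM/\rF\rM$ simplifies to $\rL \xrightarrow{\sim} \rM/\rF\rM$, which gives simultaneously $\rM=\rL+\rF\rM$ and $\rL\cap \rF\rM=0$. This is the direct sum assertion.

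Next, from $\rF\rV=\rV\rF=p=0$ on $\rM$ one reads off the inclusions
\[
\rV\rL \subseteq \rV\rM \subseteq \ker \rF \qquad \text{and}\qquad \rF\rM \subseteq \ker \rV.
\]
Writing $\ell(\cdot)$ for $\W$-length (equivalently $k$-dimension, as $p\rM=0$), the direct sum yields $\ell(\rM)=\ell(\rL)+\ell(\rF\rM)$, while rank-nullity applied to $\rF\colon \rM\twoheadrightarrow \rF\rM$ gives $\ell(\ker \rF)=\ell(\rM)-\ell(\rF\rM)=\ell(\rL)$, which is the claimed equality of dimensions. Since $\rV$ is assumed injective on $\rL$, $\ell(\rV\rL)=\ell(\rL)=\ell(\ker \rF)$, and the chain of inclusions above forces
\[
\ker \rF = \rV\rL = \rV\rM.
\]

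Finally, for the kernel of $\rV$, apply rank-nullity to $\rV\colon \rM \twoheadrightarrow \rV\rM=\ker \rF$: this gives $\ell(\ker \rV)=\ell(\rM)-\ell(\ker \rF)=\ell(\rM)-\ell(\rL)=\ell(\rF\rM)$, and combined with the inclusion $\rF\rM\subseteq \ker \rV$ already noted, equality follows. No step looks like a serious obstacle; the only point requiring any care is that $\rF$ and $\rV$ are $\sigma$- and $\sigma^{-1}$-semilinear respectively, but semilinearity does not affect $\W$-lengths over a perfect base, so the rank-nullity counts used above are valid verbatim.
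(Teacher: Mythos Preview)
Your proof is correct. The paper itself does not supply a proof of this lemma; it is stated as a summary of standard facts about Honda systems (with a pointer to \cite{BK5} in the surrounding discussion), so there is nothing to compare against beyond checking your argument on its own merits. Each step is sound: the direct sum comes straight from the induced isomorphism $\rL \xrightarrow{\sim} \rM/\rF\rM$ once $p\rL=0$, the inclusions follow from $\rF\rV=\rV\rF=0$, and the length counts using injectivity of $\rV$ on $\rL$ force all inclusions to be equalities. Your closing remark about semilinearity is the right caveat to make and disposes of the only possible subtlety.
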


Let $\widehat{\CW}_k$ denote the formal $k$-group scheme associated to the {\em Witt covector} group functor $\CW_k$.  When $k'$ is a finite extension of $k$ and $K'$ is the field of fractions of $\W(k')$, we have $\CW_k(k') \simeq K'/\W(k')$.  For any $k$-algebra $R$, let $D_k$ act on elements $\bfa = ( \dots, a_{-n}, \dots, a_{-1},a_{0})$ of $\CW_k(R)$ by 
\begin{eqnarray*}
\rF \bfa &=& ( \dots,a_{-n}^p, \dots, a_{-1}^p,a_{0}^p),  \hspace{10 pt} 
\rV \bfa = ( \dots, a_{-(n+1)}, \dots, a_{-2},a_{-1}), \\ 
\text{and} \quad \dot{c} \, \bfa &=& ( \dots, c^{p^{-n}}a_{-n}, \dots, c^{p^{-1}}a_{-1},ca_{0}), 
\end{eqnarray*}
where $\dot{c}$ in $\W$ is the Teichm\"uller lift of $c$.

The Hasse-Witt exponential map is a homomorphism of additive groups
\begin{equation}  \label{HW}
\xi: \, \widehat{CW}_k(\cO_{\ov{K}}/p\cO_{\ov{K}}) \to \ov{K}/p\cO_{\ov{K}} 
\end{equation}
defined by $\xi(\dots,a_{-n},\dots,a_{-1},a_0) = \sum \, p^{-n} \, \tilde{a}_{-n}^{p^n}$, independent of the choice of lifts $\tilde{a}_{-n}$ in $\cO_{\ov{K}}$.  If $\cU$ is the group scheme of a Honda system $(\rM,\rL)$, the points of the Galois module $U$ correspond to $D_k$-homomorphisms $\varphi\!: \, \rM \to  \widehat{CW}_k(\cO_{\ov{K}}/p\cO_{\ov{K}})$ such that $\xi(\varphi(\rL)) = 0$ and the action of $G_K$ on $U(\ov{K})$ is induced from its action on $\widehat{CW}_k(\cO_{\ov{K}}/p\cO_{\ov{K}})$.  

 We write $\CWplus$ for the usual Witt covector addition \cite[p.\! 242]{Con2} and state some related elementary facts.  For $q$ a power of $p$ and $x, y$ in $\cO_{\ov{K}}/p\cO_{\ov{K}}$, the congruence $\Phi_q(x,y) \equiv ((\tilde{x}+\tilde{y})^q-\tilde{x}^q-\tilde{y}^q)/q \pmod{p\cO_{\ov{K}}}$ defines a unique, possibly non-integral element of $\ov{K}/p\cO_{\ov{K}}$, independent of the choices of lifts $\tilde{x}, \tilde{y}$ in $\cO_{\ov{K}}$.   The binomial theorem yields the following estimates:

\begin{lem} \label{ppower}
{\rm (i)} $
\ord_p((\tilde{x} + \tilde{y})^q - \tilde{x}^q - \tilde{y}^q) \ge 1 + q \min\{\ord_p(\tilde{x}), \ord_p(\tilde{y}) \}.
$ 

\vspace{2 pt}

\hspace{46 pt} {\rm (ii)} If $\alpha, \beta \in \cO_{\ov{K}}$ and $\alpha \equiv \beta \pmod{p}$, then $\textstyle{\frac{1}{p} \alpha^p \equiv \frac{1}{p} \beta^p \pmod{p}}$.  \qed  
\end{lem}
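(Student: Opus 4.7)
The plan is to prove both parts by straightforward binomial expansion, relying on the elementary bound $\ord_p\binom{p^n}{k}\ge 1$ for $0<k<p^n$. This follows from the identity $\binom{p^n}{k}=\frac{p^n}{k}\binom{p^n-1}{k-1}$ together with the fact (Lucas's theorem, or induction on $n$) that $\binom{p^n-1}{k-1}$ is a $p$-adic unit for $0<k\le p^n$, so in fact $\ord_p\binom{p^n}{k}=n-\ord_p(k)\ge 1$.

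For (i), I would write $q = p^n$, expand
\[
(\tilde{x}+\tilde{y})^q-\tilde{x}^q-\tilde{y}^q=\sum_{k=1}^{q-1}\binom{q}{k}\tilde{x}^k\tilde{y}^{q-k},
\]
set $m=\min\{\ord_p\tilde{x},\ord_p\tilde{y}\}$, and bound each summand. The binomial factor contributes valuation at least $1$, and the monomial $\tilde{x}^k\tilde{y}^{q-k}$ contributes at least $k\ord_p\tilde{x}+(q-k)\ord_p\tilde{y}\ge qm$, since a non-negative linear combination of quantities $\ge m$ with weights summing to $q$ is bounded below by $qm$. Summing gives the required estimate $1+qm$. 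The edge case $q=1$ is vacuous, as the left-hand side vanishes.

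For (ii), since $\alpha\equiv\beta\pmod{p}$ I would write $\alpha=\beta+p\gamma$ with $\gamma\in\cO_{\ov{K}}$ and expand
\[
\alpha^p-\beta^p=\sum_{k=0}^{p-1}\binom{p}{k}\beta^k(p\gamma)^{p-k}.
\]
For $1\le k\le p-1$ the $k$-th term has valuation at least $1+(p-k)\ge 2$, one power of $p$ coming from $\binom{p}{k}$ and $p-k$ more from $(p\gamma)^{p-k}$. The $k=0$ term is $(p\gamma)^p$, of valuation at least $p\ge 2$. Thus $\alpha^p-\beta^p\in p^2\cO_{\ov{K}}$, and dividing by $p$ yields $\tfrac{1}{p}\alpha^p\equiv\tfrac{1}{p}\beta^p\pmod{p}$ as required.

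Neither part presents a real obstacle: the lemma is a routine application of the binomial theorem combined with standard valuations of $p$-power binomial coefficients. The only bookkeeping worth flagging is in (i), where the divisibility of $\binom{q}{k}$ by $p$ must be combined with the weighted-sum estimate for $k\ord_p\tilde{x}+(q-k)\ord_p\tilde{y}$ to produce the clean form $1+qm$; and in (ii) one should check that the small case $p=2$ still goes through, which it does because the $k=0$ term alone already contributes valuation $p\ge 2$.
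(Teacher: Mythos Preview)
Your proof is correct and is exactly the argument the paper has in mind: the lemma is stated with a \qed\ and introduced by the phrase ``The binomial theorem yields the following estimates,'' so the paper offers no proof beyond that hint. Your binomial expansion with the bound $\ord_p\binom{p^n}{k}=n-\ord_p(k)\ge 1$ is the intended justification.
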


Write $(\0,x_{-n},\dots,x_0)$ for $(\dots,0,0,x_{-n},\dots,x_0)$ in $\widehat{CW}_k(\cO_{\ov{K}}/p\cO_{\ov{K}})$.   A routine calculation using the formulas in \cite{Abr,Con2} gives:

\begin{lem} \label{Wittadd} 
Addition in $\widehat{CW}_k(\cO_{\ov{K}}/p\cO_{\ov{K}})$ specializes to:
$$
(\0,u_4,u_3,u_2,u_1,u_0) \CWplus (\0,v_2,v_1,v_0)  = (\0,u_4,u_3,u_2+v_2,w_1,w_0)
$$
where $w_1 = u_1+v_1-\Phi_p(u_2,v_2)$ and 
$$
w_0=u_0+v_0+ \frac{1}{p} (u_1^p+v_1^p)-\Phi_{p^2}(u_2,v_2)- \frac{1}{p} (u_1+v_1-\Phi_p(u_2,v_2))^p. \hspace{20 pt} \qed
$$
\end{lem}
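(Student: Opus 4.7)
The plan is to specialize the general Witt covector addition formulas of~\cite{Abr, Con2} to the particular truncations at hand. Letting $\bfu = (\0,u_4,u_3,u_2,u_1,u_0)$, $\mathbf{v} = (\0,v_2,v_1,v_0)$, and denoting by $\mathbf{w}$ the covector on the right-hand side, the claim is $\bfu \CWplus \mathbf{v} = \mathbf{w}$. The approach is to verify this by pairing both sides with the Hasse--Witt exponential $\xi$ of \eqref{HW}: since $\xi$ is a homomorphism of additive groups, it suffices to check the scalar congruence
\begin{equation*}
\xi(\bfu) + \xi(\mathbf{v}) \equiv \xi(\mathbf{w}) \pmod{p\cO_{\ov{K}}}.
\end{equation*}

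At positions $-4$ and $-3$, $\mathbf{v}$ contributes nothing and no Witt correction can arise from further left, so those entries of $\mathbf{w}$ are forced to be $u_4$ and $u_3$. Matching the $p^{-2}$-coefficient next forces $w_{-2} = u_2+v_2$, and the residual discrepancy collapses to terms arising from the binomial expansions of $(\tilde u_2 + \tilde v_2)^{p^n} - \tilde u_2^{p^n} - \tilde v_2^{p^n}$ for $n = 1, 2$, together with the expansion of $\tilde{w}_1^p$. Organizing these via the identity $(\tilde u_2 + \tilde v_2)^{p^2} = ((\tilde u_2 + \tilde v_2)^p)^p$ and matching the coefficient of $p^{-1}$ yields $w_1 = u_1 + v_1 - \Phi_p(u_2, v_2)$; matching the $p^0$-coefficient then yields the stated formula for $w_0$.

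The main technical point, rather than an obstacle, is verifying that $\tfrac{1}{p}\tilde{w}_1^p$ is well-defined modulo $p\cO_{\ov{K}}$ independently of the chosen lift $\tilde{w}_1$. This is exactly the content of Lemma~\ref{ppower}(ii): the congruence $\tilde{w}_1 \equiv \tilde{u}_1 + \tilde{v}_1 - \Phi_p(u_2, v_2) \pmod{p\cO_{\ov{K}}}$ implies $\tilde{w}_1^p \equiv (\tilde{u}_1 + \tilde{v}_1 - \Phi_p(u_2, v_2))^p \pmod{p^2 \cO_{\ov{K}}}$, so dividing by $p$ yields a well-defined element modulo $p\cO_{\ov{K}}$. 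Integrality of the final expression for $w_0$ is automatic since $\mathbf{w}$ is a Witt covector by construction, but can also be extracted term-by-term from Lemma~\ref{ppower}(i): applied to $\Phi_{p^2}(u_2,v_2)$ it shows that the latter has $p$-adic order at least $-1$, and the resulting $p^{-1}$-part cancels against $\tfrac{1}{p}(u_1 + v_1 - \Phi_p(u_2, v_2))^p - \tfrac{1}{p}(u_1^p + v_1^p)$, leaving an integral remainder. The argument is pure bookkeeping with binomial expansions, consistent with the authors' description as routine.
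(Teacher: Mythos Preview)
The paper offers no proof; the sentence before the lemma says only that it is ``a routine calculation using the formulas in \cite{Abr,Con2},'' so any correct derivation is acceptable here.

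Your strategy of recovering the formulas from the Hasse--Witt exponential is natural, and the level-by-level computation you sketch is essentially the right one. But the stated justification has a gap. You write that because $\xi$ is a homomorphism, ``it suffices to check the scalar congruence $\xi(\bfu) + \xi(\mathbf{v}) \equiv \xi(\mathbf{w})$.'' That does not follow: the homomorphism property gives $\xi(\bfu \CWplus \mathbf{v}) = \xi(\bfu) + \xi(\mathbf{v})$, but $\xi$ is not injective on $\widehat{\CW}_k(\cO_{\ov{K}}/p\cO_{\ov{K}})$, so agreement under a single application of $\xi$ does not force $\mathbf{w} = \bfu \CWplus \mathbf{v}$. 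Likewise, a single element of $\ov{K}/p\cO_{\ov{K}}$ carries no ``$p^{-j}$-coefficient'' one can isolate and match.

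What actually pins down each coordinate is the full family $\xi(\rV^j\,\cdot\,)$ for $j \ge 0$: since both $\rV$ and $\xi$ are additive, $\xi(\rV^j(\bfu \CWplus \mathbf{v})) = \xi(\rV^j\bfu) + \xi(\rV^j\mathbf{v})$, and for a truncated covector these relations recursively determine each entry from left to right. Concretely, $\xi \circ \rV^2$ yields $u_2 + v_2$ at position $-2$; then $\xi \circ \rV$ gives the formula for $w_1$; finally $\xi$ itself gives $w_0$. This is presumably what your ``matching the $p^{-j}$-coefficient'' was reaching for, and once framed this way your computation and your well-definedness check for $\tfrac{1}{p}\tilde{w}_1^{\,p}$ via Lemma~\ref{ppower}(ii) go through. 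Alternatively, one can simply specialize the universal Witt addition polynomials from the cited references, which is what the paper has in mind.
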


For any subset $S$ of a $k$-vector space, let $\lr{S}$ denote the subspace spanned by $S$. 

\begin{prop}  \label{LocalSimple}
The biconnected finite flat group schemes of order $p^3$ over $\W$ are described by the Honda systems $(\rM_1,\rL_1)$ or $(\rM_2,\rL_2)$ of the following form:
\begin{enumerate}[\rm i)]  
\item  For some  $\lambda$ in $k^\times$, $\rM_1$ has a $k$-basis $x_1,x_2,x_3$ such that $\rL_1 = \lr{x_1}$,  
\begin{equation} \label{VFE1}
\rV=\left[\begin{smallmatrix}0&0&0\\ 1&0&0\\ 0&0&0\end{smallmatrix}\right] \quad \text{and} \hspace{12 pt} \rF=\left[\begin{smallmatrix}0&0&0\\ 0&0&\lambda \\ 1&0&0 \end{smallmatrix}\right].
\end{equation}
\item For some  $\lambda'$ in $k^\times$, $\rM_2$ has a $k$-basis $y_1,y_2,y_3$  such that $\rL_2 = \lr{y_1,y_2}$, 
\begin{equation} \label{VFE2}
\rV=\left[\begin{smallmatrix}0&0&0\\ 1&0&0\\ 0&\lambda' &0\end{smallmatrix}\right] \quad \text{and} \hspace{12 pt} \rF=\left[\begin{smallmatrix}0&0&0\\ 0&0&0 \\ 1&0&0 \end{smallmatrix}\right].
\end{equation}
\end{enumerate}
\end{prop}

\begin{proof}
Refer to Lemma \ref{pM} as needed.   Since $(\rM,\rL)$ is biconnected, $\dim \rL = 1$ or 2.   In the first case, let $\rL_1 = \lr{x_1}$ and $x_2 = \rV x_1$.   Then $x_1$ and $x_2$ are linearly independent over $k$ because $\rV$ is injective on $\rL_1$ and nilpotent.   By Lemma \ref{pM}, $\dim \ker \rV = 2$, so $\rV ^2 = 0$.  Then $\rV x_2 = 0$, so there is a $k$-basis $x_2,z$ for $\ker \rV$ and $\rM_1 = \lr{x_1,x_2,z}$.  Since $\rF \rM_1 = \ker \rV$, we can write $\rF z = \lambda x_2 + \mu z$ with $\lambda, \mu$ in $k$.  But $\mu = 0$ because $\rF$ is nilpotent and $\rF x_2 = 0$.  Let $\rF x_1 = a x_1 + b x_2 + c z$ and use $\rV \rF = 0$ to find that $a = 0$.    Then $\lambda$ and $c$ are units because $\dim \rF \rM_1 = 2$.  Conclude that $x_1,x_2$ and $x_3 = \rF x_1$ form a basis for $\rM_1$ affording the matrix representations of $\rV$ and $\rF$ in \eqref{VFE1}.

In the second case, let $\dim \rL_2 = 2$. Then $\rV \rM_2 = \rV \rL_2 = \ker \rF$ is 2-dimensional and $\rF \rM_2 = \ker \rV$ is 1-dimensional.  In particular $\rF^2 = 0$, so that $\rM_2 = \rL_2 + \rF \rM_2$ gives $\rF \rM_2 = \rF \rL_2$.    Because $\rV$ is nilpotent and injective on $\rL_2$, we cannot have $\rV \rL_2 = \rL_2$ and so $\rV \rL_2 \cap \rL_2$ is 1-dimensional, say generated by $y_2 = \rV y_1$ with $y_1$ in $\rL_2$.   Thus $\rF y_2 = 0$.   In addition, $y_1$ and $y_2$ are linearly independent, so $\rL_2 = \lr{y_1,y_2}$.   Let $y_3 = \rF y_1$.  Then $\rM_2 = \rL + \rF \rL_2 = \lr{y_1,y_2, y_3}$ and $\rF y_3 = \rF^2 y_1 = 0$.  Write $\rV y_2 = a y_1 + b y_2 + c y_3$, use $\rF \rV = 0$ to get $a = 0$ and use nilpotence of $\rV$ to get $b = 0$.  This proves \eqref{VFE2} with $\lambda' = c$.
\end{proof}

\begin{cor} \label{basicdual}
The Cartier dual $(\rM_2^*,\rL_2^*)$ of $(\rM_2,\rL_2)$ is a Honda system of the form {\rm \eqref{VFE1}} with parameter $(\lambda')^{p ^2}$.  It is isomorphic to $(\rM_1,\rL_1)$ with parameter $\lambda$ precisely when there is some $r$ in $k^\times$ such that $\lambda = r^{1-p^3} (\lambda')^{p^2}$ in $k^\times$.
\end{cor}

\begin{proof}
Let $\rV$ and $\rF$ be the Verschiebung and Frobenius matrices in \eqref{VFE2} with respect to the basis  $y_1, y_2, y_3$ for $\rM_2$.  With respect to the dual basis $y_1^*,y_2^*,y_3^*$ of $\rM_2^*$, the Verschiebung and Frobenius for $(\rM_2^*,\rL_2^*)$ are given by
\begin{equation*} 
\rV^*= \sigma^{-1}(\rF^{\rm tr}) = \left[\begin{smallmatrix}0&0&1\\ 0 &0&0\\ 0&0&0\end{smallmatrix}\right] \quad \text{and} \hspace{12 pt} \rF^*= \sigma (\rV^{\rm tr}) = \left[\begin{smallmatrix}0&1&0\\ 0&0& \sigma(\lambda') \\ 0 &0&0 \end{smallmatrix}\right],
\end{equation*}
where $\rV^{\rm tr}$ and $\rF^{\rm tr}$ are the transposes of $\rV$ and $\rF$.  We have $\rL_2^* = \lr{y_3^*}$, since it is the annihilator of $\rL_2 = \lr{y_1,y_2}$.  Set 
$$
X_1 = y_3^*,  \quad X_2 = \rV^* X_1 =  y_1^*, \quad X_3 = \rF^* X_1 = \sigma(\lambda') y_2^*.
$$
Then $\rF^* X_3 = \sigma^2(\lambda') X_2$, so $X_1, X_2, X_3$ is the basis for a Honda system of the form \eqref{VFE1} with parameter $\lambda^* = \sigma^2(\lambda')$.  

The only freedom in the standard basis $x_1,x_2,x_3$ for $\rM_1$ in \eqref{VFE1} is to replace $x_1$ by $\tilde{x}_1 = \sigma(r)x_1$ for some $r$ in $k^\times$.  Then $\tilde{x}_2 = \rV \tilde{x}_1 = r x_2$ and $\tilde{x}_3 = \rF \tilde{x}_1 = \sigma^2(r)x_3$.  Since $\rF \tilde{x}_3 = \frac{\sigma^3(r)}{r} \lambda \tilde{x}_2$, the parameter for the new basis is $\tilde{\lambda} = \frac{\sigma^3(r)}{r}\lambda$.  Hence $(\rM_1,\rL_1)$ is isomorphic $(\rM_2^*,\rL_2^*)$ if and only if $\tilde{\lambda} = \lambda^*$ for some $r$.  

As a check, with respect to the dual basis $x_1^*,x_2^*,x_3^*$ for $(\rM_1^*,\rL_1^*)$,   Verschiebung and Frobenius are given by
\begin{equation*} 
\rV^*=  \sigma^{-1}(\rF^{\rm tr}) = \left[\begin{smallmatrix}0&0&1\\ 0 &0&0\\ 0&\sigma^{-1}(\lambda)&0\end{smallmatrix}\right] \quad \text{and} \hspace{12 pt} \rF^*= \sigma (\rV^{\rm tr}) = \left[\begin{smallmatrix}0&1&0\\ 0&0&0\\ 0&0&0 \end{smallmatrix}\right].
\end{equation*}
Then $Y_1 = x_2^*$, $Y_2 = \rV^*Y_1 =  \sigma^{-1}(\lambda) x_3^*$, $Y_3 = \rF^*Y_1 = x_1^*$ is a standard basis for a Honda system of the form \eqref{VFE2}  and $\rL_1^* = \lr{Y_1,Y_2}$ is the annihilator of $x_1^*$.  Since $V^* Y_2 = \sigma^{-2}(\lambda) Y_3$, the parameter for the basis $Y_1,Y_2,Y_3$ is $\sigma^{-2}(\lambda)$.  Taking the double dual gives the expected isomorphism.  
\end{proof}

Let $(\rM_1,\rL_1)$ be given by {\rm \eqref{VFE1}} and $(\rM_2,\rL_2)$ by {\rm \eqref{VFE2}}, with respective parameters $\lambda$ and $\lambda'$ in $k^\times$, fixed for the rest of this section.   For each pair $(i,j)$, $1 \le i, j \le 2$, we parametrize extensions of the form:
\begin{equation} \label{MExt}
0 \to (\rM_i,\rL_i) \xrightarrow{\iota} (\rM,\rL) \xrightarrow{\pi} ( \rM_j,\rL_j) \to 0, \text{ with } p\rM = 0.
\end{equation}

\begin{prop} \label{E1E1}
Let $(i,j) = (1,1)$.  For each triple $\bfs_{11} = (s_1,s_2,s_3)$ in $k^3$, there is a Honda system $(\rM,\rL)$ as in {\rm \eqref{MExt}} with $k$-basis $e_1, \dots, e_6$, such that $\rL = \lr{e_1,e_4}$, $\iota(x_1) = e_1$, $\pi(e_4) = x_1$, 
\vspace{2 pt}
$$
\rV=\left[\begin{array}{ccc|ccc}
              0&0&0&0& 0 &0 \\
              1&0&0&0& -\sigma^{-1} (s_1) &0  \\
              0&0&0&0&0&0  \\
                \hline
              0&0&0&0&0&0   \\ 
              0&0&0&1&0&0    \\ 
              0&0&0&0&0&0                                                                                
                                             \end{array}\right] \quad \text{and} \quad \,
\rF= \left[\begin{array}{ccc|ccc}
              0&0&0&0&0& \lambda s_1
           \\ 0&0&\lambda&0&0&  \lambda s_2
           \\ 1&0&0&0&0& \lambda s_3
           \\ \hline    0&0&0&0&0&0
           \\ 0&0&0&0&0&\lambda
           \\ 0&0&0&1&0&0  \end{array}\right]. 
$$
Conversely, every Honda system $(\rM,\rL)$ as in \eqref{MExt} has this form for suitable choice of $\bfs_{11}$.
\end{prop}

\begin{proof}
For a given {\em parameter vector} $\bfs_{11} = (s_1,s_2,s_3)$, this construction satisfies the conditions for a Honda system.  Conversely, given $(\rM,\rL)$ as in \eqref{MExt}, let $e_t = \iota(x_t)$ for $t =1,2,3$.   Exactness of  
$
0 \to \rL_1  \xrightarrow{\iota} \rL  \xrightarrow{\pi} \rL_1 \to 0
$ 
implies that $e_1 \ne 0$ is in $\rL$ and we can extend  to a basis for $\rL$ by adjoining $e_4$ such that $\pi(e_4) = x_1$.  We verify that $e_5 = \rV e_4$ and $e_6 = \rF e_4$ are lifts of $x_2$ and $x_3$ respectively:
$$
\pi(e_5) = \pi(\rV e_4) = \rV(\pi(e_4))= \rV x_1 = x_2,  \quad \pi(e_6) = \rF(\pi(e_4)) = \rF x_1 = x_3.
$$
Since $\pi(\rF e_6) = \rF x_3 = \lambda x_2 = \pi(\lambda e_5)$, we can write
$$
\rF e_6 = \lambda e_5 + \lambda( s_1 e_1 + s_2 e_2 + s_3 e_3)
$$
for some scalars $s_1$, $s_2$, $s_3$ in $k$.  It follows from $\rV \rF e_6 = 0$ that $\rV e_5 = -\sigma^{-1} (s_1) e_2$.  The matrix representations of $\rV$ and $\rF$ are completed by using $\rV e_6 = \rV \rF e_4 = 0$ and $\rF e_5 = \rF \rV e_4 = 0$.  
\end{proof}

\begin{Rem} \label{BasisChange11}
We examine possible changes of the above basis, if the matrices for $\rV$ and $\rF$ in the new basis have the same shape while keeping $\lambda$ and $\lambda'$ fixed.  Any other lift of $x_1$ to $\rL$ has the form $\tilde{e}_4 = e_4 + r e_1$.  Then, to obtain matrices for $\rV$ and $\rF$ with the above shape, $\tilde{e}_5 = e_5 + \sigma^{-1}(r) e_2$ and $\tilde{e}_6 = e_6+\sigma(r)e_3$.  Hence 
\begin{eqnarray*}
\rF \tilde{e}_6 &=& \lambda (e_5 + s_1 e_1 + s_2 e_2 + s_3 e_3) + \sigma^2(r) \rF e_1 \ \\
& = &  \lambda (\tilde{e}_5 - \sigma^{-1}(r) e_2+ s_1 e_1 + s_2 e_2 + s_3 e_3) + \sigma^2(r) \lambda e_2. 
\end{eqnarray*}
Thus $\sigma(\tilde{s}_2) = s_2 + \sigma^3(r) - r$, but the other parameters are preserved, suggesting that $s_2$ is related to the unramified degree $p$ extension of $F$.
\end{Rem}

\begin{prop} \label{E2E1} 
Let $(\rM,\rL)$ represent an extension of the form {\rm \eqref{MExt} with $(i,j) = (2,1)$.  There is a $k$-basis $e_1, \dots, e_6$ for $\rM$} and a parameter $\bfs_{21} = s$ in $k$ such that $\iota(y_1) = e_1$, $\pi(e_4) = x_1$, $\rL = \lr{ e_1,e_2,e_4}$,   
\vspace{2 pt}
$$
\rV=\left[\begin{array}{ccc|ccc}
              0&0&0&0& 0 &0 \\
              1&0&0&0& s &0  \\
              0&\lambda'&0&0&0&0  \\
                \hline
              0&0&0&0&0&0   \\ 
              0&0&0&1&0&0    \\ 
              0&0&0&0&0&0                                                                                
                                             \end{array}\right] \quad \text{and} \quad \,
\rF= \left[\begin{array}{ccc|ccc}
              0&0&0&0&0& -\lambda \sigma(s)
           \\ 0&0&0&0&0&0
           \\ 1&0&0&0&0&0
           \\ \hline    0&0&0&0&0&0
           \\ 0&0&0&0&0&\lambda
           \\ 0&0&0&1&0&0  \end{array}\right]. 
$$
\end{prop}

\begin{proof}
For $j =1,2,3$, let $e_j = \iota(y_j)$.   Since $0 \to \rL_2  \xrightarrow{\iota} \rL  \xrightarrow{\pi} \rL_1 \to 0$ is exact, $e_1$ and $e_2$ are in $\rL$ and we can extend  to a basis for $\rL$ by adjoining $\tilde{e}_4$ such that $\pi(\tilde{e}_4) = x_1$.   Define $\tilde{e}_5 = \rV \tilde{e}_4$ and $\tilde{e}_6 = \rF \tilde{e}_4$, so that $\pi(\tilde{e}_5) = \rV( \pi(\tilde{e}_4)) = \rV x_1 = x_2$ and $\pi(\tilde{e}_6) = \rF (\pi(\tilde{e}_4)) = \rF x_1 = x_3$.  
Since $\pi(\rF \tilde{e}_6) = \rF x_3 = \lambda x_2 = \pi(\lambda \tilde{e}_5)$, we have
$$
\rF \tilde{e}_6 = \lambda (\tilde{e}_5 + s_1 e_1 + s_2 e_2 + s_3 e_3) 
$$
for some scalars $s_1, s_2, s_3$ in $k$.

We can choose a different lift $e_4 = \tilde{e}_4 + \alpha_1 e_1 + \alpha_2 e_2$ of $x_1$ in $\rL$ with $\alpha_1, \alpha_2$ in $k$ to be determined.  Then we have the lift
$$
e_5 = \rV e_4 = \tilde{e}_5 + \sigma^{-1}(\alpha_1) e_2 + \sigma^{-1}(\alpha_2)\lambda'e_3
$$
of $x_2$ and the lift $e_6 = \rF e_4 = \tilde{e}_6 + \sigma(\alpha_1) e_3$
lof $x_3$.  We find that 
\begin{eqnarray*}
\rF e_6 &=& \lambda (\tilde{e}_5 + s_1 e_1 + s_2 e_2 + s_3 e_3) \\
            &=&  \lambda (e_5 -  \sigma^{-1}(\alpha_1) e_2 - \sigma^{-1}(\alpha_2)\lambda' e_3 + s_1 e_1 + s_2 e_2 + s_3 e_3)
\end{eqnarray*} 
Choose $\alpha_1 = \sigma(s_2)$ and $\alpha_2 = \sigma((\lambda')^{-1} s_3)$, so that $\rF e_6 = \lambda( e_5 + s_1e_1)$.  It follows from $\rV \rF e_6 = 0$ that $\rV e_5 = -\sigma^{-1}(s_1) e_2$.  Finally, let $s = -\sigma(s_1)$.
\end{proof}

\begin{prop} \label{E1E2} 
Let $(\rM,\rL)$ represent an extension of the form {\rm \eqref{MExt} with $(i,j) = (1,2)$.  There is a $k$-basis $e_1, \dots, e_6$ for $\rM$} and a parameter vector $\bfs_{12} = (s_1, s_2, s_3, s_4)$ in $k^4$ such that $\iota(x_1) = e_1$, $\pi(e_4) = y_1$, $\rL = \lr{ e_1,e_4,e_5}$,   
\vspace{2 pt}
$$
\rV=\left[\begin{array}{ccc|ccc}
              0&0&0&0& \lambda' s_1 &0 \\
              1&0&0&0& \lambda' s_2 &0  \\
              0&0&0&s_4& \lambda' s_3&0  \\
                \hline
              0&0&0&0&0&0   \\ 
              0&0&0&1&0&0    \\ 
              0&0&0&0&\lambda'&0                                                                                
                                             \end{array}\right] \hspace{2 pt} \text{and} \hspace{6 pt} 
\rF= \left[\begin{array}{ccc|ccc}
              0&0&0&0&0&0
           \\ 0&0&\lambda&0&- \lambda \sigma(s_4)& - \lambda  \sigma(s_3) 
           \\ 1&0&0&0&0& - \sigma(s_1)
           \\ \hline    0&0&0&0&0&0
           \\ 0&0&0&0&0& 0
           \\ 0&0&0&1&0& 0  \end{array}\right]. 
$$
\end{prop}

\begin{proof}
For $j =1,2,3$, let $e_j = \iota(x_j)$.   Since $0 \to \rL_1  \xrightarrow{\iota} \rL  \xrightarrow{\pi} \rL_2 \to 0$ is exact, $e_1$ is in $\rL$ and we can extend  to a basis for $\rL$ by adjoining $\tilde{e}_4$ and $\tilde{e}_5$ such that $\pi(\tilde{e}_4) = y_1$ and $\pi(\tilde{e}_5) = y_2$.  Then the most general lifts of $y_1$ and $y_2$ to $\rL$ are $e_4 = \tilde{e}_4 + \alpha_1 e_1$ and $e_5 = \tilde{e}_5+ \alpha_2 e_1$ respectively, with $\alpha_1, \alpha_2$ in $k$ to be determined.  Since 
$$
\pi(\rV \tilde{e}_4) = \rV y_1 = y_2 = \pi(\tilde{e}_5)
$$
we can write $\rV(\tilde{e}_4) = \tilde{e}_5 + t_1 e_1+ t_2 e_2 + t_3 e_3$ for some $t_1,t_2,t_3$ in $k$.   Then
\begin{eqnarray*}
\rV e_4 &=& \rV \tilde{e}_4 + \sigma^{-1}(\alpha_1) e_2 =  \tilde{e}_5 + t_1 e_1+ t_2 e_2 + t_3 e_3  + \sigma^{-1}(\alpha_1) e_2 \\
           &=& e_5 + (t_1-\alpha_2) e_1+ (t_2 + \sigma^{-1}(\alpha_1))e_2 + t_3 e_3.
\end{eqnarray*}
Choose $\alpha_2 = t_1$ and $\alpha_1 = - \sigma(t_2)$ to arrange that $\rV e_4 = e_5 + s_4 e_3$ with $s_4 = t_3$.   Define $e_6 = \rF e_4$, so that $\pi(e_6) = y_3$.  From
$$
0 =\rF \rV e_4 = \rF (e_5 + s_4 e_3) = \rF e_5 + \sigma(s_4) \lambda e_2,
$$ 
we find that $\rF e_5 = -\sigma(s_4) \lambda e_2$.  We have
$$
\pi(\rV e_5) = \rV (\pi(e_5)) = \rV y_2 = \lambda' y_3 = \lambda' \pi(e_6), 
$$
so we can write  $\rV e_5 = \lambda' (e_6 + s_1  e_1 + s_2 e_2 + s_3 e_3)$.  Then 
$$
0 =\rF \rV e_5 = \sigma(\lambda') (\rF e_6 + \sigma(s_1) e_3 + \sigma(s_3) \lambda e_2) \quad \Rightarrow \quad \rF e_6 = -\sigma(s_1) e_3 - \sigma(s_3) \lambda e_2.
$$
Finally, $\rV e_6 = \rV \rF e_4 =0$, to complete the matrix for $\rV$.
\end{proof}

\begin{prop} \label{E2E2} 
Let $(\rM,\rL)$ represent an extension of the form {\rm \eqref{MExt} with $(i,j) = (2,2)$.  There is a $k$-basis $e_1, \dots, e_6$ for $\rM$} and a parameter vector $\bfs_{22} = (s_1,s_2,s_3)$ in $k^3$ such that $\iota(y_1) = e_1$, $\pi(e_4) = y_1$, $\rL = \lr{ e_1,e_2,e_4,e_5}$,   
\vspace{2 pt}
$$
\rV=\left[\begin{array}{ccc|ccc}
              0&0&0&0& \lambda' s_1 &0 \\
              1&0&0&0& \lambda' s_2 &0  \\
              0&\lambda' &0&0& \lambda' s_3&0  \\
                \hline
              0&0&0&0&0&0   \\ 
              0&0&0&1&0&0    \\ 
              0&0&0&0&\lambda'&0                                                                                
                                             \end{array}\right] \quad \text{and} \quad \,
\rF= \left[\begin{array}{ccc|ccc}
              0&0&0&0&0&0
           \\ 0&0&0&0& 0 & 0
           \\ 1&0&0&0&0& - \sigma(s_1)
           \\ \hline    0&0&0&0&0&0
           \\ 0&0&0&0& 0 & 0
           \\ 0&0&0&1&0&0  \end{array}\right]. 
$$
\end{prop}

\begin{proof}
For $j =1,2,3$, let $e_j = \iota(y_j)$.   Since $0 \to \rL_2  \xrightarrow{\iota} \rL  \xrightarrow{\pi} \rL_2 \to 0$ is exact, $e_1$ and $e_2$ are in $\rL$.  We can extend  to a basis for $\rL$ by adjoining $\tilde{e}_4$ and $\tilde{e}_5$ such that $\pi(\tilde{e}_4) = y_1$ and $\pi(\tilde{e}_5) = y_2$.  Since $\pi(\rV\tilde{e}_4) = \rV y_1 = y_2$, there are scalars $\alpha_1, \alpha_2,\alpha_3$ in $k$ such that $\rV \tilde{e}_4 = \tilde{e}_5 + \alpha_1 e_1 + \alpha_2 e_2 + \alpha_3 e_3$.

We may correct $\tilde{e}_4$ by an element of $\iota(\rL_2) = \lr{e_1,e_2}$.  First make the change $e_4 = \tilde{e}_4 + u_2 e_2$ with $u_2$ in $k$ to be determined.  Then 
$$
\rV e_4 = \rV \tilde{e}_4 + \lambda' \sigma^{-1}(u_2) e_3 
             =  \tilde{e}_5 + \alpha_1 e_1 + \alpha_2 e_2 + (\alpha_3 + \lambda \sigma^{-1}(u_2)) e_3.
$$
Since $\tilde{e}_5$ also can be corrected by an element of $\lr{e_1,e_2}$, let 
$
e_5 = \tilde{e}_5+ \alpha_1e_1+ \alpha_2 e_2
$
and choose $u_2 = -\sigma((\lambda')^{-1} \alpha_3)$.  Then $e_5$ is another lift of $y_2$ in $L$ and $\rV e_4 = e_5$.  Define $e_6 = \rF e_4$, so that $\pi(e_6) = \rF y_1 = y_3$.  Since $\pi (\rV e_5) = \rV y_2 = \lambda' y_3$, we have 
$$
\rV e_5 =  \lambda' (e_6 + s_1 e_1 + s_2 e_2 + s_3 e_3)
$$
for some $s_1, s_2, s_3$ in $k$.  From $\rF \rV e_5 = 0$, we find that $\rF e_6 = - \sigma(s_1) e_3$.  Then use $\rV e_6 = \rV \rF e_4 = 0$ and $\rF e_5 = \rF \rV e_4 = 0$ to complete the matrices for $\rV$ and $\rF$.

We could still take  $e_4' = e_4 + u_1 e_1$. Then  $e_5' = \rV e_4' = e_5 + \sigma^{-1}(u_1) \lambda' e_2$ and $e_6' = \rF e_4' = e_6 + \sigma(u_1) e_3$ and so 
\begin{eqnarray*}
\rV e_5' &=& \rV e_5 + \sigma^{-2}(u_1) \lambda' e_3 =  \lambda' (e_6 + s_1 e_1 + s_2 e_2 + (s_3 + \sigma^{-2}(u_1)) e_3) \\ 
             &=&  \lambda' (e_6' + s_1 e_1 + s_2 e_2 + (s_3 + \sigma^{-2}(u_1) - \sigma^{-1}(u_1)) e_3).
\end{eqnarray*}
Thus   $s_3' = s_3 + \sigma^{-2}(u_1) - \sigma^{-1}(u_1)$. 
\end{proof}

Fix the parameters $\lambda$ for $(\rM_1,\rL_1)$ and $\lambda'$ for $(\rM_2,\rL_2)$.  Each of the Propositions \ref{E1E1}, \ref{E2E1}, \ref{E1E2}, \ref{E2E2} treats a specific pair $(i,j)$, $1 \le i,j \le 2$ and assigns to a parameter vector $\bfs_{ij}$, an extension $(\rM,\rL)$ as in \eqref{MExt}.  

\begin{prop} \label{BS}
Given $(i,j)$, let the parameter vectors $\bfs_{ij}$, $\bfs_{ij}'$ and $\bfs_{ij}'' = \bfs_{ij} + \bfs_{ij}'$ determine the Honda systems $(\rM,\rL)$,  $(\rM',\rL')$ and $(\rM'',\rL'')$, respectively.  Then $(\rM'',\rL'')$ is the Baer sum of $(\rM,\rL)$ and $(\rM',\rL')$.  
\end{prop}

\begin{proof} 
The underlying argument applies to additive groups $M_1$ and $M_2$.  Consider two extensions of  $M_2$ by $M_1$, say:  
$$
0 \to M_1 \xrightarrow{\iota} M \xrightarrow{\pi} M_2 \to 0 \quad \text{ and } \quad
0 \to M_1 \xrightarrow{\iota'} M' \xrightarrow{\pi'} M_2 \to 0.
$$  
Their Baer sum $M''$ is the fiber product of $M$ and $M'$ modulo the subgroup
$$
\Delta:=\left\langle \, (\iota(x),0)-(0,\iota'(x)) \hspace{4pt} | \hspace{4pt} x \in M_1\right\rangle.
$$ 
Thus, 
$
M''= \{(m,m') \in M \times M' \hspace{4pt} | \hspace{4pt} \pi(m)=\pi'(m') \}/\Delta
$ 
is an extension 
$$
0 \to M_1 \xrightarrow{\iota''} M'' \xrightarrow{\pi''} M_2 \to 0,
$$ 
with $\iota''$ induced by $x \mapsto (\iota(x), 0)$ and $\pi''$ induced by $(m,m') \mapsto \pi(m)$.

Fix sections $s\!: \, M_2 \to M$ and $s'\!: \, M_2 \to M'$ describing the two extensions, i.e., $\pi \circ s = {\rm Id}_{M_2}$ and $\pi' \circ s' = {\rm Id}_{M_2}$.  Then $s'' = s \oplus s'$ is a section of $\pi''$.  In fact, for $y \in M_2$, the map $s''$ is induced by $y \mapsto (s(y),s'(y)) \in M \oplus M'$ and 
$$
\pi''(s''(y)) = \pi''\left( \, (s(y),s'(y)) + \Delta \right) = \pi(s(y)) = y \, \text{ for all } y \in M_2.
$$

Fix endomorphisms $f_1$ and $f_2$ on $M_1$ and $M_2$ and let $g$ be an endomorphism of $M$ such that $g \circ \iota = \iota f_1$ and $\pi \circ g =  f_2 \circ \pi$.  For $y$ in $M_2$, we find that $g(s(y)) - s(f_2(y))$ is in $\ker \pi$, so $y$ determines a unique element $x$ in $M_1$ such that $g(s(y)) - s(f_2(y)) = \iota(x)$.  Define the homomorphism $\delta_g: M_2 \to M_1$ by $\delta_g(y) = x$.

Similarly, let $g'$ be an endomorphism of $M'$ respecting $\iota'$ and $\pi'$ and let $\delta'_{g'}$ be the corresponding homomorphism in $\Hom(M_2,M_1)$.  One easily checks that $g'' =g \oplus g'$ is an endomorphism of $M''$ respecting $\iota''$ and $\pi''$ and that $\delta''_{g''} = \delta_g + \delta_g'$.

For Honda systems, we treat $i=j=1$, since the other cases are similar.  Let $\bfs_{11} = (s_1,s_2,s_3)$ and $\bfs_{11}' = (s_1',s_2',s_3')$ determine the Honda systems $(\rM,\rL)$ and $(\rM',\rL')$, respectively.  Let $\rF$ and $\rV$ denote the Frobenius and Verschiebung endomorphisms on $\rM$.   Then $\delta_{\rV}$ and $\delta_{\rF}$ dictate the upper right $3 \times 3$ blocks in the matrices for $\rV$ and $\rF$ in Proposition \ref{E1E1} and thereby determine $(s_1,s_2,s_3)$.   The upper $3\times3$ block for the Verschiebung of the Baer sum is the sum of the corresponding $3 \times 3$ blocks for $(\rM,\rL)$ and $(\rM',\rL')$ and similarly for the Frobenius.  It follows that $\bfs_{11}+\bfs_{11}'$ gives the Baer sum.
\end{proof}

\section{The local field of points} \label{LocalFieldofPoints}

Let $\cE_i$ be the simple group scheme associated to the Honda system $\gE_i =  (\rM_i,\rL_i)$ of Proposition \ref{LocalSimple} for $i = 1,2$.  We preserve the notation of \S \ref{Honda}, except that we now write $\lambda$ and $\lambda'$ for lifts to $\W$ of what previously were the parameters in $k$ of $\gE_1$ and $\gE_2$.  The choices of $\lambda$ and $\lambda'$ are fixed throughout and will be suppressed from the notation.  In this section, we study the field of points of $\cE_i$ and of extensions $\cV$ of $\cE_i$ by $\cE_j$ such that $p\cV = 0$.  

\numberwithin{equation}{subsection}

\subsection{The simple group schemes} 
Let $x_1, x_2, x_3$ and $y_1, y_2, y_3$ be standard bases for $\gE_1$ and $\gE_2$ respectively.  Then $x_1$ generates $\rM_1$ and $y_1$ generates $\rM_2$ as a $D_k$-module.  Write $E_i$ for the Galois module of $\cE_i$.  The points of $E_i$ are $D_k$-homomorphisms 
\begin{equation} \label{DkHom}
\psi\!: \, \rM_i \to  \widehat{CW}_k(\cO_{\ov{K}}/p\cO_{\ov{K}}) 
\end{equation}
such that the Hasse-Witt exponential $\xi(\psi(\rL_i)) = 0$, as in \eqref{HW}. 

More generally, whenever a congruence involving elements of $\ov{K}/w \cO_{\ov{K}}$ is stated, it is meant to be valid, independent of the choice of lifts to $\ov{K}$.

We first  show that as Galois modules, $E_1$ and 
$$
\gR_\lambda = \{ a \in \cO_{\ov{K}}/p \cO_{\ov{K}}  \hspace{2 pt} \vert \hspace{2 pt} a^{p^3} +\lambda^p p a \equiv 0 \! \pmod{p^2 \cO_{\ov{K}}} \}
$$
are isomorphic and we describe the field of points $F = K(E_1)$.  For $a$ in $\gR_\lambda$, define $b \in \cO_{\ov{K}}/p \cO_{\ov{K}}$ by  
$
 b \equiv \lambda^{-1} a^{p^2} \! \pmod{p\cO_{\ov{K}}}  
$
and denote by $\psi_a$ the unique $D_k$-homomorphism $\rM_1 \to  \widehat{CW}_k(\cO_{\ov{K}}/p\cO_{\ov{K}})$ satisfying $\psi_a(x_1) =  (\0,b,a)$.

\begin{prop}  \label{E1Field} 
The field of points $F = K(E_1)$ is the splitting field over $K$ of $f_\lambda(x) = x^{{p^3}-1} + \lambda^p p$.  Moreover:  
\begin{enumerate}[{\rm i)}]
\item A $D_k$-map $\psi$ is in $E_1$ if and only if $\psi = \psi_a$ for some $a$ in $\gR_\lambda$.  \vspace{2 pt}

\item The maximal subfield of $F$ unramified over $K$ is $F_0 = K(\Mu_{p^3-1})$ and $F/F_0$ is totally ramified of degree $p^3-1$.  \vspace{2 pt}

\item   $\gR_\lambda$ is an $\F_{p^3}$-vector space under the usual operations on $\cO_{\ov{K}}/p \cO_{\ov{K}}$ and $a \mapsto \psi_a$ defines an $\F_p[G_K]$-isomorphism $\gR_\lambda \xrightarrow{\sim} E_1$.     
\end{enumerate}
\end{prop}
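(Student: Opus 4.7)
The strategy is to parameterize $D_k$-maps $\psi\!:\rM_1\to\widehat{CW}_k(\cO_{\ov K}/p\cO_{\ov K})$ by their value on the $D_k$-generator $x_1$ of $\rM_1$, impose the Hasse-Witt vanishing on $\rL_1=\langle x_1\rangle$, and read off the field of points. The defining relations of $\rM_1$ derived from \eqref{VFE1} are $px_1=0$, $\rV^2 x_1=0$, and $\rF^2 x_1=\dot\lambda\,\rV x_1$.

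From $\rV^2\psi(x_1)=0$ one gets $\psi(x_1)=(\0,b,a)$ with $b,a\in\cO_{\ov K}/p\cO_{\ov K}$. The relation $0=p\,\psi(x_1)=\rF\rV\psi(x_1)=(\0,0,b^p)$ forces $\tilde b^p\in p\cO_{\ov K}$, so $\tilde b^{p^2}\in p^p\cO_{\ov K}\subseteq p^2\cO_{\ov K}$; hence $\rF^2\psi(x_1)=(\0,0,a^{p^2})$ in $\widehat{CW}_k(\cO_{\ov K}/p\cO_{\ov K})$, and equating with $\dot\lambda\,\rV\psi(x_1)=(\0,0,\lambda b)$ yields $b\equiv\lambda^{-1}a^{p^2}\pmod{p}$. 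The Hasse-Witt condition then reads $\tilde a+p^{-1}\tilde b^p\equiv 0\pmod{p\cO_{\ov K}}$, and Lemma \ref{ppower}(ii) lifts the congruence on $b$ to $\tilde b^p\equiv\lambda^{-p}\tilde a^{p^3}\pmod{p^2}$, so the condition becomes $\tilde a^{p^3}+\lambda^p p\,\tilde a\equiv 0\pmod{p^2\cO_{\ov K}}$, i.e., $a\in\gR_\lambda$. Conversely, any $a\in\gR_\lambda$ yields a unique $\psi_a$, proving (i).

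For (iii), since $\lambda$ lifts to a unit in $\W$, the polynomial $f_\lambda$ is Eisenstein over $\cO_K$, so $h(X):=Xf_\lambda(X)=X^{p^3}+\lambda^p p X$ has exactly $p^3$ roots in $\ov K$: namely $0$ together with the $p^3-1$ elements $\pi\zeta$ where $\pi^{p^3-1}=-\lambda^p p$ and $\zeta\in\Mu_{p^3-1}$. Each such root has $p$-valuation $1/(p^3-1)$, and differences of distinct roots again have $p$-valuation $1/(p^3-1)<1$, so the $p^3$ reductions into $\cO_{\ov K}/p\cO_{\ov K}$ are distinct and lie in $\gR_\lambda$. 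Additive closure of $\gR_\lambda$ follows from Lemma \ref{ppower}(i): for $\tilde a_i$ of $p$-valuation $\ge 1/(p^3-1)$, one finds $\ord_p\!\bigl((\tilde a_1+\tilde a_2)^{p^3}-\tilde a_1^{p^3}-\tilde a_2^{p^3}\bigr)\geq 1+p^3/(p^3-1)>2$; scalar multiplication by $\F_{p^3}$ via Teichm\"uller lifts of $(p^3-1)$-th roots of unity preserves $\gR_\lambda$ because it permutes the roots of $f_\lambda$. A Hensel refinement (correcting $\tilde a$ by a suitable element of $p\cO_{\ov K}$ to arrange $\ord_p h(\tilde a)>2\ord_p h'(\tilde a)=2$) shows conversely that every element of $\gR_\lambda$ is the reduction of a root of $h$, whence $|\gR_\lambda|=p^3$ and the $\F_{p^3}$-structure is determined. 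The assignment $a\mapsto\psi_a$ is manifestly $G_K$-equivariant because the Galois action on $\widehat{CW}_k(\cO_{\ov K}/p\cO_{\ov K})$ is induced from the natural one on $\cO_{\ov K}$.

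Finally, $F=K(E_1)=K(\gR_\lambda)$ is the splitting field of $f_\lambda$, namely $K(\Mu_{p^3-1},\pi)$. The subfield $F_0:=K(\Mu_{p^3-1})$ is unramified over $K$ since $x^{p^3-1}-1$ is separable mod $p$, and $F/F_0$ is totally ramified of degree $p^3-1$ by the Eisenstein property of $f_\lambda$, giving (ii). The principal obstacle is the derivation of the congruence $b\equiv\lambda^{-1}a^{p^2}\pmod p$ from the Dieudonn\'e relations in $\widehat{CW}_k(\cO_{\ov K}/p\cO_{\ov K})$: the essential observation is that $p$-torsion of $\psi(x_1)$ automatically annihilates the $-1$-st coordinate of $\rF^2\psi(x_1)$ modulo $p$, collapsing an \emph{a priori} two-coordinate Witt covector identity to a single mod-$p$ equation.
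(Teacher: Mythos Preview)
Your argument follows the same approach as the paper's: parameterize $D_k$-maps by $\psi(x_1)=(\0,b,a)$, use $\rV\rF=0$ and $\rF^2 x_1=\lambda\,\rV x_1$ to force $b\equiv\lambda^{-1}a^{p^2}\pmod p$, then read off $a\in\gR_\lambda$ from the Hasse--Witt vanishing. Parts (i) and (ii) are fine and essentially match the paper.

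There is, however, a genuine gap in (iii). You verify that $\gR_\lambda$ is an $\F_{p^3}$-vector space and that $a\mapsto\psi_a$ is a $G_K$-equivariant \emph{bijection}, but you never check that it is a \emph{group homomorphism}. Addition in $E_1$ is Witt covector addition $\CWplus$, not ordinary addition, so $\psi_{a_1}+\psi_{a_2}=\psi_{a_1+a_2}$ is not automatic. Concretely, $(\0,b_1,a_1)\CWplus(\0,b_2,a_2)=(\0,b_1+b_2,\;a_1+a_2-\Phi_p(b_1,b_2))$, and you must show $\Phi_p(b_1,b_2)\equiv 0\pmod p$. This does follow from the valuation $\ord_p(b_i)=p^2/(p^3-1)$, since then $\ord_p\Phi_p(b_1,b_2)\ge p\cdot p^2/(p^3-1)=p^3/(p^3-1)>1$; the paper carries out exactly this check (either via the covector formula or by reducing to $\omega_0\equiv\omega_1+\omega_2\pmod p$ after writing $a_i=\omega_i\theta$). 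Without it, an equivariant bijection between two groups of order $p^3$ need not be linear.

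A minor remark: your Hensel step is redundant once you know $|E_1|=p^3$ from the order of the group scheme, since (i) already gives a bijection $\gR_\lambda\to E_1$ and the $p^3$ distinct reductions of roots of $h$ then exhaust $\gR_\lambda$.
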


\begin{proof}   
i) Let the $D_k$-homomorphism $\psi$ be a non-zero point in $E_1$. Since $V^2 = 0$, we have $\psi(x_1) = (\0,b,a)$ for some $a$ and $b$ in $\cO_{\ov{K}}/p \cO_{\ov{K}}$ with $a \ne 0$.   Acting by $\rV$ and $\rF$ gives 
$$
\psi(x_2)  = (\0,b) \quad \text{ and } \quad \psi(x_3) = (\0,b^p,a^p).
$$
  Then $0 = {\rV \rF}(\psi(x_1)) = \rV(\psi (x_3))$ forces $b^p \equiv 0 \pmod{p}$, so $\ord_p(b) \ge \frac{1}{p}$.  Also,
$$
\lambda (\0,b) = \psi(\lambda x_2) = \psi(\rF x_3) = \rF(\psi(x_3)) = (\0,a^{p^2}).
$$
Hence $\lambda b \equiv a^{p^2} \pmod{p}$ and so $\ord_p(a) = \frac{1}{p^2} \ord_p(b) \ge \frac{1}{p^3}$.   From $\xi(\psi(x_1)) = 0$, we find that $a + \frac{1}{p} b^p \equiv 0 \pmod{p}$.  Then Lemma \ref{ppower} gives
$$
\textstyle{a + \frac{1}{p}(\lambda^{-1} a^{p^2})^p \equiv 0 \pmod{p}}.
$$  
Thus $a^{p^3} +\lambda^p p a \equiv 0 \pmod{p^2}$, from which we obtain the following valuations:
$$
\textstyle{\ord_p(a) = \frac{1}{p^3-1} \quad \text{ and } \quad \ord_p(b) = \frac{p^2}{p^3-1}} \, .
$$ 
All points of $E_1$ are accounted for as $a$ ranges over $\gR_\lambda$
\vspace{2 pt}

ii) If $f_\lambda(\theta) = 0$ and $\zeta$ generates $\Mu_{p^3-1}$, then the roots of $f_\lambda$ have the form $\theta_j = \zeta^j \theta$, while their reductions modulo $p$ give all non-zero elements of $\gR_\lambda$.  For the converse, lift $a \in \gR_\lambda$ to $\tilde{a}$ in $\cO_{\ov{K}}$.  If $g(x) = x^{p^3}-x$, then $g(\tilde{a}/\theta) \equiv 0 \pmod{\frac{p}{\theta} \, \cO_{\ov{K}}}$, so $\tilde{a} \equiv 0$ or $\tilde{a} \equiv \theta_j \pmod{p \cO_{\ov{K}}}$ for some $j$ by Hensel's Lemma.    Thus $F = K(\Mu_{p^3-1},\theta)$ is the splitting field of $f_\lambda$.   Since $f_\lambda$ is an Eisenstein polynomial over $F_0 = K(\Mu_{p^3-1})$, the extension $F/F_0$ is totally ramified of degree $p^3-1$ and $F_0$ is the maximal subfield of $F$ unramified over $K$.   \vspace{2 pt}

iii)  Scalar multiplication by $\F_{p^3}$ on $\gR_\lambda$ is defined via the embedding 
$$
\F_{p^3} = \W(\F_{p^3})/p \hspace{.5 pt} \W(\F_{p^3}) \hookrightarrow \cO_{\ov{K}}/p \cO_{\ov{K}}.
$$  
Closure of $\gR_\lambda$ under this operation and under the usual addition in $ \cO_{\ov{K}}/p \cO_{\ov{K}}$  is clear.  If $a_1$ and $a_2$ are in $\gR_\lambda$, addition of points in $E_1$ implies that 
$$
\psi_{a_1}(x_1) \CWplus \psi_{a_2}(x_1) = \psi_{a}(x_1)
$$
for some $a$ in $\gR_\lambda$.  Denote this equation of Witt covectors by
$$
(\0,b_1,a_1) \CWplus (\0,b_2,a_2) = (\0,b,a).
$$
By applying the Verschiebung $\rV$, we find that  $b = b_1+b_2$, so $a^{p^2} = a_1^{p^2} + a_2^{p^2}$ in $\cO_{\ov{K}}/p\cO_{\ov{K}}$.  Without changing notation, lift $a$, $a_1$ and $a_2$ to $\cO_{\ov{K}}$ and let $a = \omega_0 \theta$, $a_1 = \omega_1 \theta$ and $a_2 = \omega_2 \theta$, where $\theta$ is a root of $f$ and each $\omega_j$ is in $\Mu_{p^3-1} \cup \{0\}$.  Then  
$$\omega_0^{p^2} \equiv \omega_1^{p^2} + \omega_2^{p^2} \equiv (\omega_1+\omega_2)^{p^2} \pmod{\frac{p}{\theta^{p^2}} \cO_{\ov{K}}}.
$$  
Since the $\omega$'s lie in the unramified extension $\Q_p(\Mu_{p^3-1})/\Q_p$ and $\ord_p(p/\theta^{p^2}) > 0$, we obtain $\omega_0 \equiv \omega_1 + \omega_2 \pmod{p}$ and thus $a = a_1+a_2$ in $\cO_{\ov{K}}/p\cO_{\ov{K}}$.  Alternatively, $\ord_p(a-a_1-a_2) \ge 1$ by the covector addition formulas in Lemma \ref{Wittadd}.
\end{proof}
 
Next we show that there is a Galois module isomorphism of $E_2$ with
\begin{equation} 
\gR'_{\lambda'} = \{ \alpha \in \cO_{\ov{K}}/p \cO_{\ov{K}}  \hspace{2 pt} \vert \hspace{2 pt} (-\lambda')^{p^2} \alpha^{p^3} +  p^{p+1} \alpha \equiv 0 \! \pmod{p^{p+2} \cO_{\ov{K}}} \}
\end{equation}
and we describe $F' = K(E_2)$.  For $\alpha$ in $\gR'_{\lambda'}$, define $\beta,\gamma \in \cO_{\ov{K}}/p \cO_{\ov{K}}$ by  
\begin{equation*}
\textstyle{\gamma \equiv \lambda' \alpha^p \! \pmod{p\cO_{\ov{K}}} \quad \text{and} \quad \beta \equiv - \frac{1}{p} \gamma^p \equiv -\frac{1}{p} (\lambda')^p \alpha^{p^2} \! \pmod{p\cO_{\ov{K}}}}.    
\end{equation*}
Denote by $\psi'_\alpha$ the unique $D_k$-homomorphism $\rM_2 \to  \widehat{CW}_k(\cO_{\ov{K}}/p\cO_{\ov{K}})$ satisfying $\psi'_\alpha(y_1) =  (\0,\gamma,\beta,\alpha)$.

\begin{prop}  \label{E2Field} 
The field of points $F' = K(E_2)$ is the splitting field over $K$ of $g_{\lambda'}(x) = (-\lambda')^{p^2} x^{{p^3}-1} + p^{p+1}$.  \begin{enumerate}[{\rm i)}]
\item A $D_k$-map $\psi$ is in $E_2$ if and only if $\psi = \psi'_\alpha$ for some $\alpha$ in $\gR'_{\lambda'}$.  \vspace{2 pt}

\item  Let $F'_0$ be the maximal subfield of $F'$ unramified over $K$.  If $p$ is odd, then $F'_0 = K(\Mu_{p^3-1},\sqrt{\lambda'})$ and $\fdeg{F'}{F'_0} = \frac{1}{2} \left( p^3-1 \right)$.  If $p = 2$, then $F'_0 = K(\Mu_7)$ and $\fdeg{F'}{F'_0} = 7$.   \vspace{2 pt}

\item   $\gR'_{\lambda'}$ is an $\F_{p^3}$-vector space under the usual operations on $\cO_{\ov{K}}/p \cO_{\ov{K}}$ and $\alpha \mapsto \psi'_\alpha$ defines an $\F_p[G_K]$-isomorphism $\gR'_{\lambda'} \xrightarrow{\sim} E_2$.
\end{enumerate}
\end{prop}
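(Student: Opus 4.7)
The argument parallels Proposition~\ref{E1Field}, but now uses a four-entry Witt covector because $\rV^3 y_1 = 0$.  The plan is: (a)~exhibit the shape of $\psi \in E_2$; (b)~reduce the Hasse--Witt conditions on $\rL_2 = \lr{y_1,y_2}$ to the defining congruence of $\gR'_{\lambda'}$; (c)~describe the splitting field of $g_{\lambda'}$; and (d)~verify the $\F_{p^3}$-vector space structure.

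For (a) and (b):  since $y_1$ generates $\rM_2$ and $\rV^3 y_1 = 0$, I write $\psi(y_1) = (\0,\gamma,\beta,\alpha)$, so that $\psi(y_2) = \rV \psi(y_1) = (\0,\gamma,\beta)$ and $\lambda' \psi(y_3) = \rV \psi(y_2) = (\0,\gamma)$.  The relations $\rF y_2 = \rF y_3 = 0$ force $\gamma^p \equiv \beta^p \equiv 0 \pmod p$, and $\rF \psi(y_1) = \psi(y_3)$ forces $\gamma \equiv \lambda'\alpha^p \pmod p$.  The Hasse--Witt condition $\xi(\psi(y_2)) = 0$ gives $\beta \equiv -\gamma^p/p \pmod p$, and Lemma~\ref{ppower}(ii) then yields $\beta^p/p \equiv (-1)^p \gamma^{p^2}/p^{p+1} \pmod p$.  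Substituting this together with $\gamma^{p^2} \equiv (\lambda')^{p^2}\alpha^{p^3} \pmod{p^{p+2}}$ (from Lemma~\ref{ppower}(i)) into $\xi(\psi(y_1)) = \alpha + \beta^p/p + \gamma^{p^2}/p^2 \equiv 0 \pmod p$, then multiplying through by $p^{p+1}$ and absorbing the term $p^{p-1}(\lambda')^{p^2}\alpha^{p^3}$ (whose $p$-adic valuation exceeds $p+2$), and using $(-1)^p = (-1)^{p^2}$, yields the defining congruence
\[
(-\lambda')^{p^2}\alpha^{p^3} + p^{p+1}\alpha \equiv 0 \pmod{p^{p+2}}.
\]
The Newton polygon forces $\ord_p(\alpha) = (p+1)/(p^3-1)$ for each non-zero solution, and $\psi$ is recovered from $\alpha$ by the explicit formulas.

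For (c):  the $p^3-1$ non-zero roots of $g_{\lambda'}$ in $\ov K$ form a single $\Mu_{p^3-1}$-orbit, so $F' = K(\theta_0, \Mu_{p^3-1})$ for any root $\theta_0$, and $K(\Mu_{p^3-1})/K$ is unramified of degree $3$.  Setting $w = -p^{p+1}/(-\lambda')^{p^2}$ gives $\theta_0^{p^3-1} = w$ with $\ord_p(w) = p+1$.  For $p = 2$, $\gcd(p^3-1,p+1) = 1$, so this equation defines a totally ramified extension of degree $7$ over $K(\Mu_7)$; moreover $\lambda'$ is automatically a square in the perfect residue field of $K(\Mu_7)$, so $F'_0 = K(\Mu_7)$ suffices.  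For $p$ odd, $\gcd(p^3-1, p+1) = 2$; since $(\lambda')^{p^2-1}$ is a square one has $\sqrt{w} \in K(\Mu_{p^3-1}, \sqrt{\lambda'})$, and rewriting $(\theta_0^{(p^3-1)/2})^2 = w$ together with $\gcd((p^3-1)/2, (p+1)/2) = 1$ gives a totally ramified extension of degree $(p^3-1)/2$ over $F'_0 = K(\Mu_{p^3-1}, \sqrt{\lambda'})$.

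For (d):  Lemma~\ref{ppower}(i) shows that the mixed terms in $(\alpha_1+\alpha_2)^{p^3} - \alpha_1^{p^3} - \alpha_2^{p^3}$ have $p$-adic valuation at least $1 + p^3(p+1)/(p^3-1) > p+2$, so $\gR'_{\lambda'}$ is closed under ordinary addition in $\cO_{\ov K}/p\cO_{\ov K}$; closure under Teichm\"uller scaling by $\F_{p^3}$ is immediate from the formula for $\dot c \cdot \bfa$.  The map $\alpha \mapsto \psi'_\alpha$ is injective by construction and $G_K$-equivariant, and both sides have $p^3$ elements, so it is an $\F_p[G_K]$-isomorphism.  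To verify that $\psi'_{\alpha_1} \CWplus \psi'_{\alpha_2} = \psi'_{\alpha_1+\alpha_2}$, I apply $\rV^2$ and use Lemma~\ref{Wittadd} to reduce to the evident equality of the deepest ($\gamma$-)coordinate, then lift and reduce modulo $p$ as in Proposition~\ref{E1Field}(iii).  The principal obstacle will be the bookkeeping in~(b), where each application of Lemma~\ref{ppower}(ii) must be justified modulo $p$ despite the large negative powers of $p$ appearing in intermediate expressions.
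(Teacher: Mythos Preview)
Your proof is correct and follows essentially the same route as the paper. One simplification you miss in part~(b): rather than carrying the term $\gamma^{p^2}/p^2$ through and absorbing $p^{p-1}(\lambda')^{p^2}\alpha^{p^3}$ at the end, the paper observes directly that $\ord_p(\gamma^{p^2}/p^2) = p^2(p^2+p)/(p^3-1) - 2 > 1$, so this term already vanishes modulo~$p$ in the Hasse--Witt equation for $\psi(y_1)$; one then substitutes $\beta \equiv -\tfrac{1}{p}(\lambda')^p\alpha^{p^2}$ straight into $\alpha + \tfrac{1}{p}\beta^p \equiv 0$. Also note a small ordering issue: your congruence $\gamma^{p^2} \equiv (\lambda')^{p^2}\alpha^{p^3} \pmod{p^{p+2}}$ via Lemma~\ref{ppower}(i) needs the bound $\ord_p(\gamma) \ge (p+1)/p^2$, which follows from $\gamma^p \equiv \beta^p \equiv 0 \pmod p$ but should be stated before, not after, the substitution; the paper handles this by deriving the valuations first (``by comparing the terms'').
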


\begin{proof}   
i)  Suppose that the $D_k$-homomorphism $\psi$ is a non-zero point in $E_2$. Since $V^3 = 0$, we have $\psi(y_1) = (\0,\gamma,\beta,\alpha)$ for some $\alpha$, $\beta$ and $\gamma$ in $\cO_{\ov{K}}/p \cO_{\ov{K}}$.  By applying $\rV$ and $\rF$ we find that 
$$ 
\psi(y_2)  = (\0,\gamma,\beta) \quad \text{and} \quad \psi(y_3) = (\0,\gamma^p,\beta^p,\alpha^p).
$$  
Also, $0 = {\rV \rF}(\psi(y_1)) = \rV(\psi (y_3))$ gives $\gamma^p \equiv \beta^p \equiv 0 \pmod{p}$, so $\psi(y_3) = (\0,\alpha^p)$.   In addition,
$$
\lambda' (\0,\alpha^p) = \psi(\lambda' y_3) = \psi(\rV y_2) = \rV(\psi(y_2)) = (\0,\gamma) \quad \Rightarrow \quad \gamma \equiv \lambda' \alpha^p \pmod{p}.
$$
Vanishing of the Hasse-Witt exponential $\xi(\psi(y_2))$ gives $\beta + \frac{1}{p} \gamma^p \equiv 0 \pmod{p}$ and so $\beta \equiv -\frac{1}{p} (\lambda')^p \alpha^{p^2} \pmod{p}$ by Lemma \ref{ppower}.  In follows that $\ord_p(\gamma) = p \ord_p(\alpha)$ and $\ord_p(\beta) = p^2 \ord_p(\alpha)-1$.  Vanishing of $\xi(\psi(y_1))$ gives 
\begin{equation} \label{HWy1}
\textstyle{\alpha + \frac{1}{p} \beta^p + \frac{1}{p^2} \gamma^{p^2} \equiv 0  \pmod{p}}.  
\end{equation}
By comparing the terms, we obtain the following valuations:
$$
\textstyle{\ord_p(\alpha) = \frac{p+1}{p^3-1},  \quad  \ord_p(\beta) = \frac{p^2+1}{p^3-1}, \quad \ord_p(\gamma) = \frac{p^2+p}{p^3-1}.}  
$$  
Then $\frac{1}{p^2} \gamma^{p^2} \equiv 0  \pmod{p}$ in \eqref{HWy1}.  Substitute for $\beta$  in that equation to find that
$
\alpha + \frac{1}{p^{p+1}} (-\lambda')^{p^2}  \alpha^{p^3} \equiv 0 \pmod{p}   
$
and so  $g_{\lambda'}(\alpha) \equiv 0 \pmod{p^{p+2}}$. 

Now, let $\alpha$ be a root of $g_{\lambda'}$.   If $p = 2$, then $2\alpha^{-2}$ is a root of the Eisenstein polynomial $y^7 - 2 (\lambda')^8$ and $F' = K(\sqrt[7]{2\lambda'}, \Mu_7)$.  If $p$ is odd, we have
$$
(\lambda')^{p^2} =\left( p^{(p+1)/2} \, \alpha^{-\frac{1}{2}(p^3-1)} \right)^2
$$
and so $\lambda'$ is a square in $F'$.  Choose the sign of $\eta = \sqrt{\lambda'}$ so that $\alpha$ a root of $\eta^{p^2}y^{\frac{1}{2}(p^3-1)} - p^{\frac{1}{2}(p+1)}$.  Since $\gcd\{\frac{1}{2}(p^3-1), \frac{1}{2}(p+1)\} = 1$, the field $F'$ is totally ramified of degree $\frac{1}{2}(p^3-1)$ over $F_0 = K(\Mu_{p^3-1},\sqrt{\lambda'})$.

The proof of (iii) parallels that of Proposition \ref{E1Field}
\end{proof}

\begin{Rem} \label{Pa}  
According to Proposition \ref{E1Field}(ii), if $\theta$ is a root of $f_\lambda(x)$, then the lifts of all $a \ne 0$ in $\gR_\lambda$ to $\cO_{\ov{K}}$ comprise the cosets $\zeta^j \theta + p\cO_{\ov{K}}$ as $\zeta^j$ ranges over $\Mu_{p^3-1}$.  Thus $\gR_\lambda$ descends to an $\F_{p^3}$-linear subspace of $\cO_F/p \cO_F$ and we write 
$$
\gR_\lambda(F) = \{ a \in \cO_F/p \cO_F \hspace{3 pt} \vert \hspace{3 pt} a^{p^3} +\lambda^p p a \equiv 0 \text{ (mod }p^2 \cO_F) \}.
$$
Let the point $\psi_a$ in $E_1$ correspond to $a$ in $\gR_\lambda(F)$.  We define an action of elements $\epsilon$ of $\F_{p^3}$ on $E_1$ by $\epsilon \psi_a = \psi_{\epsilon a}$, in agreement with multiplication on Witt covectors.  In fact, if $b_a = \lambda^{-1} a^{p^2}$, then evaluating on $x_1$ gives
$$
[\epsilon] (\0,b_a, a) = (\0,  \epsilon^{\frac{1}{p}} b_a, \epsilon a)= (\0,b_{\epsilon a}, \epsilon a).
$$
Similarly, $\gR'_{\lambda'}(F') = \{ \alpha \in \cO_{F'}/p \cO_{F'}  \hspace{2 pt} \vert \hspace{2 pt} (-\lambda')^{p^2} \alpha^{p^3} +  p^{p+1} \alpha \equiv 0 \! \pmod{p^{p+2} \cO_{F'}} \}$.  If the point $\psi'_\alpha$ in $E_2$ corresponds to $\alpha$ in $\gR'_{\lambda'}(F')$, let $\epsilon \psi'_{\alpha} = \psi'_{\epsilon \alpha}$.   
\end{Rem}

For the convenience of the reader, we summarize some congruences and valuations from Propositions \ref{E1Field} and \ref{E2Field} in the following Table.

\begin{table}[h] 
\begin{center}
{\begin{caption}{Convenient formulae for the Honda systems $\rM_1$ and $\rM_2$} \label{Bank} \end{caption} }
\end{center}
\renewcommand{\arraystretch}{1.4}
\begin{tabular}{ | l |}
\hline
For $\rM_1$: \,  $a^{p^3} + \lambda^p pa \equiv 0 \pmod{p^2}$ \\ 
\hspace*{3 pt} $\bullet$ mod $p$:  \hspace{4 pt}  $b \equiv \lambda^{-1} a^{p^2}$, \hspace{10 pt} $\frac{1}{p}b^p \equiv -a$   \hspace{20 pt}  $\bullet$ mod $p^p$:  \hspace{4 pt} $\frac{1}{p^2}b^{p^2} \equiv (-1)^p p^{p-2} a^p$ \\ 
\hspace*{3 pt}  $\bullet$ $\ord_p(a) = \frac{1}{p^3-1}$, \quad$\ord_p(b) = \frac{p^2}{p^3-1}$ \\ [3 pt]
\hline  
For $\rM_2$:  \, $(-\lambda')^{p^2} \alpha^{p^3} + p^{p+1} \alpha \equiv 0 \pmod{p^{p+2}}$\\ 
\hspace*{3pt} $\bullet$ mod $p$:  \hspace{4 pt} $\beta \equiv -\frac{1}{p} (\lambda')^p \alpha^{p^2}$, \hspace{18 pt}  $\frac{1}{p}\beta^p \equiv -\alpha$, \hspace{18 pt}  $\gamma \equiv \lambda' \alpha^p$, \hspace{18 pt} $\frac{1}{p} \gamma^p \equiv -\beta$ \\  
\hspace*{3pt} $\bullet$ mod $p^p$:  \hspace{10 pt} $\frac{1}{p^2}\beta^{p^2} \equiv (-1)^p p^{p-2} \alpha^p$, \hspace{25 pt}  $\frac{1}{p^2} \gamma^{p^2} \equiv (-1)^{p+1} p^{p-1} \alpha$ \\ 
\hspace*{3 pt}  $\bullet$ $\ord_p(\alpha) = \frac{p+1}{p^3-1},  \quad  \ord_p(\beta) = \frac{p^2+1}{p^3-1}, \quad \ord_p(\gamma) = \frac{p^2+p}{p^3-1}$   \\ [3 pt]
\hline
\end{tabular}
\end{table}
\renewcommand{\arraystretch}{1}

\begin{Rem} \label{AlphaA}
We have
$
(\alpha/a)^{p^3-1} = \left( (-\lambda')^{-p} \lambda^{-1} p \right)^p, 
$
from the defining equations, so $\alpha/a = w^p$ is a $p$-th power for some $w$ in $F'' = FF' = K(E_1,E_2)$.
\end{Rem}

\subsection{Extensions of exponent $p$} \label{ptsEEp}

For $j$ in $\{1,2\}$, let $\cE_j$ be the group scheme associated to the Honda system $(\rM_j, \rL_j)$ of Proposition \ref{LocalSimple}.  For a given pair $(i,j)$, let  
$$
\Ext^1_{[p]}(\cE_i,\cE_j) = \{ \, [\cV] \in \Ext^1(\cE_i,\cE_j) \, \vert \, p \cV = 0 \}
$$
be the subgroup of $\Ext^1(\cE_i,\cE_j)$ comprising the classes represented by group schemes $\cV$ such that $p \cV = 0$.  Thus, there is an exact sequence of Honda systems 
\begin{equation} \label{ExtSeq}
0 \to (\rM_i, \rL_i) \xrightarrow{\iota} (\rM,\rL) \xrightarrow{\pi}  (\rM_j,\rL_j) \to 0
\end{equation}
 corresponding to the exact sequence of group schemes $0 \to \cE_j \xrightarrow{\pi^*} \cV \xrightarrow{\iota^*} \cE_i \to 0$.

Let $\bfs_{ij}$ be a parameter vector for $(\rM,\rL)$ and preserve the notation in \S \ref{Honda} for ordered bases:   $x_1, x_2, x_3$ of $\rM_1$,  $y_1,y_2,y_3$ of $\rM_2$ and $e_1, \dots, e_6$ of $\rM$. The points of $\cV$ are homomorphisms
$$
\varphi \in \Hom_{D_k}({\rM},\widehat{CW}_k(\cO_{\ov{K}}/p\cO_{\ov{K}}))
$$
determined by $\varphi(e_1)$ and $\varphi(e_4)$, since $e_1$ and $e_4$ generates $\rM$ as a $D_k$-module.

\begin{theo}  \label{CondBd}
Let $\cV$ represent a class in $\Ext^1_{[p]}(\cE_i,\cE_j)$.  Then $K(V)$ is an elementary abelian $p$-extension of $K(E_i,E_j)$ with ray class conductor exponent $\gf \le p^2+p$.  
\end{theo}

The proof is done in the next four propositions, depending on the pair $(i,j)$.  In some cases, we determine conductor bounds for certain parameters and obtain the others from Baer sums, using Proposition \ref{BS} and the following general Lemma.

\begin{lem} \label{BC}
Fix an algebraic closure $\ov{F}$ of a local field $F$.  Inside $\ov{F}$, let $L_i/F$ be an abelian extension with ray class conductor exponent $\gf_i = \gf(L_i/F)$ for $i=1,2,3$.  If $\gf_1 < \gf_2$ and the inclusions $L_2 \subseteq L_1L_3$ and $L_3 \subseteq L_1L_2$ hold, then $\gf_2 = \gf_3$.
\end{lem}

\begin{proof}
The inequalities $\gf_3 \le \gf(L_1L_2/F) = \gf_2$ and $\gf_2 \le \gf(L_1L_3/F) \le \max\{\gf_1,\gf_3\}$ follow from the given inclusions and imply that $f_2 = f_3$.
\end{proof}

By construction, $K(V)/K(E_i,E_j)$ is an elementary abelian $p$-extension.  See Propositions \ref{E1Field} and \ref{E2Field} for properties of the fields $F = K(E_1)$ and $F' = K(E_2)$.  Let $F'' = K(E_1,E_2)$ and $L = K(V)$.   Points of $E_1$ have the form $\psi_a$ with $a$ in $\gR_\lambda(F)$ and points of $E_2$ have the form $\psi'_\alpha$, with $\alpha$ in $\gR'_\lambda(F')$, as in Remark \ref{Pa}.  See Table \ref{Bank} for a summary of the relations on $a$, $\alpha$ and the other parameters of $\cE_1$ and $\cE_2$.

If $i=1$, let $L_a$ be the field generated by the points $\varphi$ of $V$ in the fiber over the point $\psi_a \ne 0$ of $E_1$, treating $L_a$ as an extension of $F$ if $j=1$, or $F''$ if $j=2$.  Similarly, if $i = 2$, let $L_\alpha$ be the field generated by the points in the fiber of $V$ over the point $\psi'_\alpha \ne 0$ of $E_2$.   By controlling the conductor for each of these fibers, we obtain the bound on $\gf(L/F)$,

To determine the field of points, it is convenient to make a change of variables using congruences on elements of $\ov{K}$ that are not necessarily integral.  Thus, we consider $\cO_{\ov{K}}$-modules of the form $\rR_d = \ov{K}/\frac{p}{d} \cO_{\ov{K}}$, with $d$ in $\cO_{\ov{K}}$ and $\ord_p(d) < 1$.  Let $r = x + \frac{p}{d} \cO_{\ov{K}}$ be an element of  $\rR_d$ with $x$ in $\ov{K}$ and also write $\ov{x} = r$ for the image of $x$ in $\rR_d$.  Let $q = p^n$ with $n \ge 1$.  For the reader's convenience, we summarize some aspects of the arithmetic of $\rR_d$ needed here and easily checked.

\vspace{2 pt}

\setlength{\hangindent}{25 pt}
\noindent \hspace{3 pt} {\bf C1} If $r$ is 0 in $\rR_d$, define $\ord_p(r) = \infty$.  Otherwise, define $\ord_p(r) = \ord_p(x)$, which is independent of the choice of representative $x$.   

\vspace{2 pt}

\noindent \hspace{3 pt} {\bf C2} Assume that $px^{q-1}$ is in $\cO_K$ and define $r^q = x^q + \frac{p}{d} \cO_{\ov{K}}$, also independent of the choice of representative $x$.   Thus, by definition $\ov{x}^{\, q} = \ov{x^q}$. 

\vspace{2 pt}

\noindent \hspace{3 pt} {\bf C3} If $dx^q$ and $dy^q$ are in $\cO_{\ov{K}}$,  then $\ov{x}^q$ and $\ov{y}^q$ are well-defined in $\rR_d$ and we have $(\ov{x}+\ov{y})^q = \ov{x}^{\,q} + \ov{y}^{\,q}$ in $\rR_d$. 

\vspace{2 pt}

\noindent \hspace{3 pt} {\bf C4} If $\ord_p(d) \le \ord_p(d')$, then the natural projection $\rR_{d} \to \rR_{d'}$ is well-defined.  

\begin{prop} \label{E1E1Field}
If $\cV$ represents a class in $\Ext^1_{[p]}(\cE_1,\cE_1)$ and $L = K(V)$, then $\gf(L/F) \le p^2$.
\end{prop}

\begin{proof}
Here $L_a$ is the extension of $F$ generated by the points $\varphi$ in the fiber of $V$ over $\psi_a$, as described above.  In the proof, we provide more details about the conductor of $L_a$ in terms of the parameter vector $\bfs_{11} = (s_1,s_2,s_3)$ of the associated Honda system $\rM$.   Since $\cV$ is a non-split extension, some $s_j \ne 0$. 

By Proposition \ref{E1Field}, $\varphi(e_1) = (\0,b,a)$, $\varphi(e_2) = (\0,b)$ and $\varphi(e_3) = (\0,a^p)$.  Set $\varphi(e_4)=(\0,d_2,d_1,d_0)$, since $\rV^3 = 0$ and obtain
$
\varphi(e_5) = \varphi(\rV e_4) = (\0,d_2,d_1).
$  
Then $d_2^p \equiv d_1^p \equiv 0 \pmod{p}$ because $\varphi(\rF \rV e_4) = 0$, so $\varphi(e_6) = \varphi(\rF e_4) = (\0,d_0^p)$.     We have
$$
-\sigma^{-1}(s_1)(\0,b) = \varphi(-\sigma^{-1}(s_1) e_2) = \varphi(\rV e_5) = (\0, d_2), 
$$
so $d_2 \equiv -\sigma^{-1}(s_1) b \pmod{p}$ and by Lemma \ref{ppower}, $\frac{1}{p^2} d_2^{p^2} \equiv p^{p-2} s_1^p a^p \pmod{p}$. 

By Proposition \ref{E1E1}, 
$
\rF e_6 = \lambda (e_5 + s_1 e_1 + s_2e_2 + s_3 e_3).  
$
Applying $\varphi$ gives
$$
(\0,d_0^{p^2}) = \lambda ((\0,d_2,d_1)  \CWplus  s_1(\0,b,a)  \CWplus  s_2(\0,b) \CWplus s_3(\0,a^p)).
$$
Combine the Witt covectors above via \eqref{Wittadd}. Then the rightmost coordinate gives
$$
d_0^{p^2} \equiv \lambda (d_1 + s_1 a + s_2 b + s_3 a^p) \pmod{p}.
$$ 

The change of variables $d_0 = ax$, $d_1 = by$, $d_2 =cz$ yields 
\begin{equation}  \label{E1E1y}
y = x^{p^2} - A \text{ in } \rR_b, \hspace{3 pt} \text{ with } \, A = s_2 + \lambda (s_1a+s_3a^p)\,{a^{-p^2}}. 
\end{equation} 

From the vanishing of the Hasse-Witt exponential on $\varphi(\rL)$, we have
$$
0 = \xi(\varphi(e_4)) = \frac{1}{p^2} d_2^{p^2} + \frac{1}{p} d_1^p + d_0 \equiv p^{p-2} s_1^p a^p  + \frac{1}{p} d_1^p + d_0   \pmod{p}
$$
and so 
\begin{equation}  \label{E1E1x}
x = y^p - B \text{ in } \rR_a, \quad \text{ where } B = p^{p-2} s_1^p a^{p-1}.
\end{equation}  
Then $B$ is in the maximal ideal $\gm_F$ and we can take $B = 0$ if $p \ge 3$ or if $s_1 = 0$ in $k$.   By \eqref{E1E1x}, $\ord_p(x) = \ord_p(y^p)$, leading to $\ord_p(x^{p^2}) = \ord_p(A)$ by  \eqref{E1E1y}.   

Since $\ord_p(a) < \ord_p(b)$, we also have  $x = y^p - B$ in $\rR_b$ by {\bf C4}.  Then {\bf C3} implies that $x^{p^2} = (y^{p}-B)^{p^2} = y^{p^3}$ in $\rR_b$, since
$$
\ord_p(by^{p^3}) = \ord_p(bA) = \ord_p(a^{p^2} A) > 0 \quad \text{and} \quad B^{p^2} \equiv 0 \pmod{p}.
$$ 
Substituting in \eqref{E1E1y} gives $y^{p^3}-y-A = 0$ in $\rR_b$.  Conversely, if $y$ is a root of $f_a(Y) = Y^{p^3} - Y -A$ over $F$ and $x$ is defined by \eqref{E1E1x}, we find that the congruence \eqref{E1E1y} holds.  Hence the field $L_a$ generated by the points of the fiber in $V$ over $\psi_a$ is the splitting field of $f_a$.

If $s_1 = s_3 = 0$, then the reduction of $f_a$ is separable over the residue field $k_F$, so $L_a/F$ is unramified.  In the remaining cases, Proposition \ref{AS} implies that $L_a/F$ is an elementary abelian extension of degree $p^3$ and gives the conductor exponent.  More precisely, if $s_1 \ne 0$, then $\gf(L_a/F) = p^2$.  If $s_1 = s_2 = 0$ and $s_3 \ne 0$, then $\gf(L_a/F) = p$.  Finally, the conductor exponent also is $p$ for parameters of the form $\bfs_{11} = (0,s_2,s_3)$ by using Proposition \ref{BS} and Lemma \ref{BC} to combine the result for $\bfs_{11}= (\0,0,s_3)$ with the unramified case $\bfs_{11} = (0,s_2,0)$. 
\end{proof}

\begin{prop} \label{E2E1Field}
If $\cV$ represents a class in $\Ext^1_{[p]}(\cE_1,\cE_2)$ and $L = K(V)$, then $\gf(L/F'') = p^2-p$.
\end{prop}

\begin{proof}
Let $s$ be the parameter for the corresponding Honda system $\rM$, with $s \ne 0$ in $k$ since $\cV$ is non-split.  Here $L_\alpha$ is the extension of $F''$ generated by the points $\varphi$ in the fiber over $\psi'_\alpha$.   By Proposiiton \ref{E2Field}, $\varphi(e_1) = (\0,\gamma,\beta,\alpha)$, $\varphi(e_2) = (\0,\gamma,\beta)$ and $\varphi(e_3) = (\0,\alpha^p)$.  Since $\rV^4 = 0$, we can set 
$$
\varphi(e_4)=(\0,d_3,d_2,d_1,d_0) \quad \text{and so} \quad \varphi(e_5) = \varphi(\rV e_4) = (\0,d_3,d_2,d_1).
$$
Then $\varphi(\rF \rV e_4) = 0$ implies that $d_3^p \equiv d_2^p \equiv d_1^p \equiv 0 \pmod{p}$
and so $\varphi(e_6) = \varphi(\rF e_4) = (\0,d_0^p)$.  Since $\rV e_5 = s e_2$, we have
$
(\0,d_3,d_2) = s (\0,\gamma,\beta).
$
Thus
$$
d_3 \equiv s^{\frac{1}{p}} \gamma \pmod{p} \quad \text{and} \quad d_2 = s \beta \pmod{p}.
$$
By applying $\varphi$ to  
$
\rF e_6  + \lambda \sigma(s)e_1 = \lambda e_5,
$
we find that
$$
(\0,d_0^{p^2}) \CWplus  \lambda \sigma(s) (\0,\gamma,\beta,\alpha) = \lambda  (\0,d_3,d_2,d_1).
$$
By the Witt covector addition in Lemma \ref{Wittadd}, we recover the above congruences on $d_3$ and $d_2$ and we find that 
$
d_0^{p^2} + \lambda \sigma(s) \alpha \equiv  \lambda d_1 \pmod{p}.
$

For $a$ and $b$ as in Table \ref{Bank}, the change of variables $d_0 = ax$, $d_1 = by$ gives 
\begin{equation} \label{M2M1y}
y = x^{p^2} + A \text{ in } \rR_b, \quad \text{ where } A = \sigma(s) \alpha/b \, \text{ in } F''.  
\end{equation}
Since $e_4$ is in $\rL$, the Hasse-Witt exponential is 0 on $\varphi(e_4)$, so 
\begin{equation} \label{HWE1E2}
\frac{1}{p^3} d_3^{p^3} + \frac{1}{p^2} d_2^{p^2}+ \frac{1}{p}d_1^p + d_0 \equiv 0 \pmod{p}.
\end{equation}
The leftmost term above vanishes because $\ord_p(d_3) \ge \ord_p(\gamma) = \frac{p^2+p}{p^3-1}$.  In addition, $\frac{1}{p} \beta^p \equiv - \alpha \pmod{p}$ and so $\frac{1}{p^2} \beta^{p^2} \equiv (-1)^{p} p^{p-2} \alpha^p \pmod{p^{p-1}}$ by Lemma \ref{ppower}(ii).   Hence \eqref{HWE1E2} yields
\begin{equation} \label{M2M1x}
x= y^p - B  \text{ in } \rR_a, \quad \text{ where }  B = (-s)^{p^2} p^{p-2} \frac{\alpha^p}{a}.
\end{equation}
By assumption, $s \ne 0$ in $k$, so
$$
\ord_p(A) = -\frac{p^2-p-1}{p^3-1} \hspace{4 pt} \text{ and}  \hspace{4 pt} \ord_p(B) = p-2+\frac{p^2+p-1}{p^3-1}.
$$   
We can take $B = 0$ if $p \ge 3$.  In general, since $\ord_p(B) > 0$ and $\ord_p(A) < 0$, it follows from \eqref{M2M1x} and \eqref{M2M1y} that  
$$
\ord_p(y^p) = \ord_p(x) = \frac{1}{p^2} \ord_p(A).
$$ 
Since $\ord_p(a) < \ord_p(b)$, we also have  $x = y^p - B$ in $\rR_b$ by {\bf C4}.  Then {\bf C3} implies that $x^{p^2} = (y^{p}-B)^{p^2} = y^{p^3}$ in $\rR_b$, since
$$
\ord_p(by^{p^3}) = \ord_p(bA) = \ord_p(\alpha) > 0 \quad \text{and} \quad B^{p^2} \equiv 0 \pmod{p}.
$$ 
Putting this into \eqref{M2M1y} gives $y^{p^3}-y+A = 0$ in $\rR_b$.  Conversely, if $y$ is a root of $f_a(Y) = Y^{p^3} - Y + A$ over $F''$ and $x$ is defined by \eqref{M2M1x}, we find that the congruence \eqref{M2M1y} holds.  Hence the field $L_a$ generated by the points of the fiber in $V$ over $\psi'_\alpha$ is the splitting field of $f_a$.  

By Proposition \ref{AS} with $m = 0$ and $w = A^{-1}$, the extension $L_\alpha/F''$ is elementary abelian and totally ramified of degre $p^3$.  The conductor exponent of every intermediate field is $\gf = p^2-p$.  
\end{proof}

\begin{prop} \label{E1E2Field}
If $\cV$ represents a class in $\Ext^1_{[p]}(\cE_2,\cE_1)$ and $L = K(V)$, then $\gf(L/F'') \le p^2+p$.
\end{prop}

\begin{proof}
Let $\bfs_{12} = (;s_1,s_2,s_3,s_4)$ be the parameters for the corresponding Honda system $\rM$.  Here $L_a$ is the extension of $F''$ generated by the points $\varphi$ in the fiber over $\psi_a \ne 0$.  By Proposition \ref{E1Field} with $b \equiv \lambda^{-1}a^{p^2} \pmod{p}$, we have
$$
\varphi(e_1) = (\0,b,a), \quad \varphi(e_2) = (\0,b), \quad \varphi(e_3) = (\0,a^p).
$$ 
Set $\varphi(e_4)=(\0,d_3,d_2,d_1,d_0)$, since $\rV^4 = 0$.   Applying $\rV$ gives
$$
 (\0,d_3,d_2,d_1) = \rV(\varphi(e_4)) =  \varphi(\rV e_4) = \varphi(e_5) \CWplus s_4 \varphi(e_3) = \varphi(e_5) \CWplus (\0,s_4 a^p).
$$
Hence $\varphi(e_5) =  (\0,d_3,d_2,d_1-s_4 a^p)$, using Lemma \ref{Wittadd}.  From $0 = \varphi(\rF \rV e_4)$ we find that $d_3^p \equiv d_2^p \equiv d_1^p \equiv 0 \pmod{p}$, so $\varphi(e_6) = \varphi(\rF e_4) = (\0,d_0^p)$.   
By Proposition \ref{E1E2}, 
$
\rF e_6  + \sigma(s_1)e_3 + \sigma(s_3) \lambda e_2 = 0.  
$
Applying $\varphi$ gives
$$
(\0,d_0^{p^2}) \CWplus  \sigma(s_1) (\0,a^p) \CWplus \lambda  \sigma(s_3) (\0,b) = 0.
$$
Since $\lambda b \equiv a^{p^2} \pmod{p}$, we find that 
\begin{equation} \label{redundant}
d_0^{p^2} + \sigma(s_1) a^p + \sigma(s_3) a^{p^2} \equiv 0 \pmod{p}.
\end{equation}
Also, we have  
\begin{eqnarray*}
(\0,d_3,d_2) = \varphi(\rV e_5) &=& \lambda' \varphi(e_6+ s_1e_1+s_2e_2+s_3e_3) \\
 &=& \lambda' \left((\0,d_0^p) \CWplus s_1(\0,b,a) \CWplus s_2(\0,b)  \CWplus s_3(\0,a^p) \right) \\
&=&  (\0,(\lambda's_1)^{\frac{1}{p}}b, \lambda'(d_0^p +  s_1a + s_2 b + s_3 a^p)). 
\end{eqnarray*}
If we set $A = s_1a + s_2 b + s_3 a^p$, then 
\begin{equation} \label{12d3d2}
d_3 \equiv (\lambda's_1)^{\frac{1}{p}}b \pmod{p} \quad \text{and} \quad d_2 \equiv \lambda' (d_0^p + A) \pmod{p}.
\end{equation}

The last formula for $d_2$ already implies \eqref{redundant}.  Indeed, since $d_2^p \equiv 0 \pmod{p}$ and $b^p \equiv 0 \pmod{p}$, we find that 
$$
0 \equiv d_0^{p^2} + A^p \equiv d_0^{p^2} + (s_1a)^p + (s_2b)^p + (s_3a^p)^p \equiv d_0^{p^2} + (s_1a)^p + s_3^p a^{p^2} \! \pmod{p}.
$$

Vanishing of the Hasse-Witt exponential on $\varphi(\rL)$ gives: 
\begin{eqnarray} 
\label{xi5} \frac{1}{p^2} d_3^{p^2} + \frac{1}{p} d_2^p + d_1-ta^p = \xi(\varphi(e_5)) &\equiv& 0 \, (\bmod \, p) \quad \text{and} \\
\label{xi4} \frac{1}{p^3} d_3^{p^3} + \frac{1}{p^2} d_2^{p^2 }+ \frac{1}{p}d_1^p + d_0 =  \xi(\varphi(e_4)) &\equiv& 0 \, (\bmod \, p) .
\end{eqnarray}
Thanks to Lemma \ref{ppower}(ii) and Table \ref{Bank}, equation \eqref{12d3d2} implies that
$$
\frac{1}{p} d_3^{p} \equiv \lambda' s_1 \frac{1}{p} b^p \equiv -\lambda' s_1a  \, (\bmod \, p) \quad \text{and so} \quad  \frac{1}{p^{p+1}}{d_3^{p^2}} \equiv (-\lambda' s_1)^p \frac{1}{p} a^p \, (\bmod \, p).
$$
Hence, we can rewrite \eqref{xi5} as 
\begin{equation} \label{eqd1}
d_1 \equiv  - \frac{1}{p} d_2^p + ra^p \, (\bmod \, p), \quad \text{with } r = s_4 - (-\lambda's_1)^p p^{p-2}.
\end{equation}
Since vanishing of $d_1^p \pmod{p}$ implies that $\ord_p(d_1) \ge \frac{1}{p}$, we have
$$
\ord_p(\textstyle{\frac{1}{p}} d_2^p) \ge \min\{\ord_p(d_1),\ord_p(a^p)\} = \ord_p(a^p),
$$
with equality when $r \ne 0$.  Hence $\ord_p(d_2) \ge \frac{1}{p}+ \ord_p(a)$ and so
\begin{equation} \label{12vz}
\ord_p(\frac{1}{p^{2p}} d_2^{p^3}) \ge p^2-2 p+ p^3 \ord_p(a) = (p-1)^2 + \frac{1}{p^3-1} > 1.
\end{equation}

The congruence \eqref{12d3d2} implies that $\ord_p(d_3) \ge \ord_p(b) = p^2/(p^3-1)$, so the term $\frac{1}{p^3} d_3^{p^3} \equiv 0 \, (\bmod \, p)$ drops out of \eqref{xi4}, to give 
\begin{equation} \label{12d0eq}
\frac{1}{p^2} d_2^{p^2 }+ \frac{1}{p}d_1^p + d_0 \equiv 0 \pmod{p}.
\end{equation}
Then $d_0 \equiv -\frac{1}{p}d_1^p - \frac{1}{p^2} d_2^{p^2} \pmod{p}$.  Each term on the right of the last congruence is integral, so
\begin{equation} \label{d0p}
d_0^{p} \equiv (-1)^p \left( \frac{1}{p^p} d_1^{p^2} + \frac{1}{p^{2p}} d_2^{p^3} \right) \equiv  (-1)^p  \frac{1}{p^p} d_1^{p^2} \pmod{p}.
\end{equation}
thanks to \eqref{12vz}.

Fix $\alpha$, $\beta$, $\gamma$ as in Table \ref{Bank} and change variables via $d_2 = \gamma z$, $d_1 = \beta y$, $d_0 = \alpha x$ in \eqref{12d3d2}, \eqref{eqd1} and \eqref{12d0eq}, to obtain
\begin{equation}  \label{12zyx}
\begin{array}{r l l l l l}
{\rm (i)} & \hspace{5 pt}  &z = x^p + A \alpha^{-p}  &\text{ in } \rR_\gamma   \vspace{4 pt} \\
{\rm (ii)} & &y = z^p + r a^p \beta^{-1} &\text{ in } \rR_\beta, \vspace{4 pt}  \\
{\rm (iii)} & &x = y^p + (-1)^pp^{p-1} z^{p^2}  &\text{ in } \rR_\alpha.
\end{array}
\end{equation}
These congruences are necessary and sufficient for the construction of the map $\varphi$.

Making the same change of variables in \eqref{d0p} yields $x^p = y^{p^2}$ in $\rR_\gamma$.   Since $ \ord_p(\beta) < \ord_p(\gamma)$, equation (\ref{12zyx}(ii)) also holds in $\rR_\gamma$.  Using $d_1 = \beta y$ and $\ord_p(d_1) \ge \frac{1}{p}$ one can check that $py^{p^2-1}$ is integral and so $y^{p^2} = (z^p + r a^p \beta^{-1})^{p^2}$ is well-defined in $\rR_\gamma$ by {\bf C2}.  Combining this information with (\ref{12zyx}(i)) gives 
\begin{equation} \label{12Fieldz}
\left(z^p + ra^p \beta^{-1} \right)^{p^2} - z + A \alpha^{-p} = 0 \quad \text{in } \rR_\gamma. 
\end{equation}

To treat the case $A = 0$, an equation in terms of $y$ is preferred.  For that, we use $x^p = y^{p^2}$ in $\rR_\gamma$, already derived above, to obtain $z = y^{p^2} +A \alpha^{-p}$ in $\rR_\gamma$ from (\ref{12zyx}(i)).  One can check that $pz^{p-1}$ is integral, so $z^p = (y^{p^2} + A \alpha^{-p})^p$ is well-defined in $\rR_\gamma$ by {\bf C2}.  Then (\ref{12zyx}(ii)) implies that
\begin{equation} \label{12Fieldy}
(y^{p^2} + A \alpha^{-p})^p - y +   ra^p \beta^{-1} = 0 \quad \text{in } \rR_\gamma. 
\end{equation}

Conversely, given a root $z$ of \eqref{12Fieldz} in $\ov{K}$, define $y$ and $x$ by (\ref{12zyx}(ii)) and (iii) respectively, treating each as an equation in $\ov{K}$.  Then the congruence (\ref{12zyx}(i)) holds in $\rR_\alpha$.  Similar considerations apply if we begin with a root $y$ of \eqref{12Fieldy}.

Next, we find the conductor exponent $\gf(L_a/F'')$ in various cases where Proposition \ref{AS} applies.  It follows that in these cases, $L_a/F''$ is a totally ramified elementary $p$-extension of degree $p^3$.  

See that Proposition for notation used here.  In particular, $\ord_p(C) > p^3/(1-p^3)$ holds, as required.  Recall that $e_{F''} = p^3-1$, $v_{F''}(a) = 1$ and $v_{F''}(\alpha) = p+1$.

\vspace{2 pt}

\noindent {\bf Case 1}.  $r = 0$ and $z^{p^3}-z + C = 0$ over $F''$, with $C = A \alpha^{-p}$ by \eqref{12Fieldz}.
\begin{enumerate}[\hspace{2 pt} $\bullet$]
\item $s_1 \ne 0$.  Then $m = 0$, $w = C^{-1} $, $v_{F''}(w) = v_{F''}(\alpha^p) - v_{F''}(a) = p^2+p-1$ and so $\gf(L_a/F'') = p^2+p$.  \vspace{2 pt}

\item $s_1 = s_2 = 0$, $s_3 \ne 0$.  Then $C = s_3 (a/\alpha)^p$.  But $\alpha/a = w^p$ is a $p$-th power in $F''$ by Remark \ref{AlphaA}, so $C = s_3 w^{-p^2}$.  Furthermore, $m=2$ and  $v_{F''}(w) = 1$, so $\gf(L_a/F'') = 2$.  
\vspace{2 pt}

\item $s_1 = s_3 = 0$, $s_2 \ne 0$.   Then $C = s_2 b/\alpha^p = s_2 \lambda^{-1} (a^p/\alpha)^p$.  Set $w = \alpha/a^p$, so $m = 1$, $v_{F''}(w) = 1$ and $\gf(L_a/F'') = 2$.  
\end{enumerate}

\vspace{2 pt}

\noindent {\bf Case 2}.  $A = 0$, $r \ne 0$ and $y^{p^3}-y + C = 0$ over $F''$, with $C = r a^p/\beta$ by \eqref{12Fieldy}.  Then  $w = C^{-1}$, $m = 0$ and $v_{F''}(w) = p^2-p+1$, so $\gf(L_a/F'') = p^2-p+2$.

\vspace{5 pt}

Thanks to Proposition \ref{BS} and Lemma \ref{BC}, judicious use of Baer sums shows that in Case 1, the conductor exponent satisfies $\gf \le p^2+p$ for all $\bfs_{12} = (s_1,s_2,s_3,s_4)$ with $r=0$, or equivalently, $s_4 = (-\lambda' s_1)^p p^{p-2}$.  Case 2 covers all parameters of the form $\bfs_{12} = (0,0,0,s_4)$ with $s_4 \ne 0$.  By combining Cases 1 and 2, we have $\gf \le p^2+p$ for all possible parameters.
\end{proof}

\begin{prop} \label{E2E2Field}
If $\cV$ represents a class in $\Ext^1_{[p]}(\cE_2,\cE_2)$ and $L = K(V)$, then 
$$
\gf(L/F') \le \begin{cases} \frac{1}{2}(p^2+1) &\text{if } p \text{ is odd}, \vspace{2 pt} \\ 4 &\text{if } p = 2. \end{cases}
$$
\end{prop}

\proof
A more precise result, in terms of the parameter vector $\bfs_{22} = s_1,s_2,s_3$ for the associated Honda system $\rM$ can be found at the end of the proof.  Here $L_\alpha$ is the extension of $F''$ generated by the points $\varphi$ in the fiber over $\psi'_\alpha \ne 0$.

We have $\varphi(e_1) = (\0,\gamma,\beta,\alpha)$, $\varphi(e_2) = (\0,\gamma,\beta)$ and $\varphi(e_3) = (\0,\alpha^p)$.  Since $\rV^5 = 0$, we can set   
$$
\varphi(e_4)=(\0,d_4,d_3,d_2,d_1,d_0),     
$$
so $\varphi(e_5) = \varphi(\rV e_4) = (\0,d_4,d_3,d_2,d_1)$.  Then $\varphi(\rF \rV e_4) = 0$ gives 
$$
d_4^p \equiv d_3^p \equiv d_2^p \equiv d_1^p \equiv 0 \pmod{p},
$$ 
and thus $\varphi(e_6) = \varphi(\rF e_4) = (\0,d_0^p)$.    By applying $\varphi$ to $\rF e_6 +\sigma(s_1)e_3 = 0$, we find that
$
(\0,d_0^{p^2}) + \sigma(s_1) (\0,\alpha^p) = 0,
$
so
$$
d_0^{p^2} + \sigma(s_1) \alpha^p \equiv 0 \pmod{p}.
$$
By Proposition \ref{E2E2}, 
$
\rV e_5 = \lambda' (e_6 + s_1 e_1 + s_2 e_2 + s_3 e_3).  
$
Applying $\varphi$ gives
\begin{eqnarray*}
(\0,d_4,d_3,d_2) &=&  \lambda' \left( (\0,d_0^p) \CWplus  s_1(\0,\gamma,\beta,\alpha)  \CWplus s_2(\0,\gamma,\beta) \CWplus s_3(\0,\alpha^p) \right) \\
&=&\lambda'  (\0, s_1^{\frac{1}{p^2}} \gamma, s_1^{\frac{1}{p}}\beta + s_2^{\frac{1}{p}} \gamma,  d_0^p + s_1 \alpha + s_2 \beta + s_3 \alpha^p - \delta),
\end{eqnarray*}
where $\delta = \frac{1}{p} \lambda' \sum_{j=1}^{p-1} \, \binom{p}{j} (s_1^{\frac{1}{p}} \beta)^j (s_2^{\frac{1}{p}} \gamma)^{p-j}$, with $\ord_p(\delta) \ge \ord_p(\beta^{p-1}\gamma) = 1+ \frac{2p}{p^3-1}$.  Hence $\delta \equiv 0 \pmod{p}$ and we have the mod $p$ congruences:
\begin{equation} \label{E2E2d2d3eq}
d_4 \equiv (\lambda' s_1)^{\frac{1}{p^2}} \gamma, \quad d_3 \equiv (\lambda')^{\frac{1}{p}}(s_1^{\frac{1}{p}} \beta + s_2^{\frac{1}{p}} \gamma), \quad d_2 \equiv \lambda' (d_0^p +A), 
\end{equation}
with $A = s_1\alpha + s_2 \beta + s_3 \alpha^p$.  Vanishing of the Hasse-Witt exponential gives:
$$
\begin{array}{r l l}
\label{E2xi4} \xi(\varphi(e_4)) &= \, \frac{1}{p^4} d_4^{p^4} + \frac{1}{p^3} d_3^{p^3} + \frac{1}{p^2} d_2^{p^2} + \frac{1}{p} d_1^p + d_0 \equiv 0 &\pmod{p}, \vspace{4 pt} \\
\label{E2xi5} 
\xi(\varphi(e_5)) &=  \, \frac{1}{p^3} d_4^{p^3} + \frac{1}{p^2} d_3^{p^2} + \frac{1}{p} d_2^p + d_1 \equiv 0 &\pmod{p}.  
\end{array}
$$

By \eqref{E2E2d2d3eq}, $\ord_p(d_4^{p^3}) \ge \frac{p^5+p^4}{p^3-1} > 5 $ and $\ord_p(d_3^{p^3}) \ge \frac{p^5+p^3}{p^3-1} > 4$.  The ordinals of the interior terms in the binomial expansion of $(s_1^{\frac{1}{p}}\beta  + s_2^{\frac{1}{p}} \gamma)^p$ are bounded below by
$
\ord_p(p \beta^{p-1} \gamma) = 2 + \frac{2p}{p^3-1}.
$
A lower bound for ordinals of the interior terms of $(s_1\alpha  + s_2 \beta)^p$ is given by
$
\ord_p(p \alpha^{p-1} \beta) = 1 + \frac{2p^2}{p^3-1}.
$
From the relations in Table \ref{Bank} and Lemma \ref{ppower}(ii), we have mod $p$ congruences:
$$
\begin{array}{c l l}
\frac{1}{p} d_3^p &\equiv& \frac{1}{p} \lambda'(s_1 \beta^p  + s_2 \gamma^p) \equiv -\lambda' (s_1 \alpha + s_2 \beta)  \hspace{15 pt}  \text{and}  \vspace{3 pt} \\
\frac{1}{p^{p+1}} d_3^{p^2} &\equiv& \frac{1}{p} (-\lambda')^p (s_1 \alpha + s_2 \beta)^p.
\end{array}
$$
Since the interior terms in the expansion of $\frac{1}{p} (s_1 \alpha + s_2 \beta)^p$ are integral, as is $\frac{1}{p} \beta^p$, we find that
$$
\frac{1}{p^2} d_3^{p^2} \equiv p^{p-2} (-\lambda' s_1 \alpha)^p \pmod{p^{p-1}}.
$$
The above equations for vanishing of $\xi(\varphi(e_4))$ and $\xi(\varphi(e_5))$ now reduce to 
\begin{eqnarray}
\label{22xi4}   \frac{1}{p^2} d_2^{p^2} + \frac{1}{p} d_1^p + d_0 &\equiv& 0 \pmod{p}  
, \\
\label{22xi5}   \frac{1}{p} d_2^p + d_1 &\equiv& (\lambda' s_1)^p p^{p-2}  \alpha^p  \pmod{p}.  
\end{eqnarray}

Set $d_0 = \alpha x$, $d_1 = \beta y$, $d_2 = \gamma z$.  Using the relations in Table \ref{Bank}, the last equation in \eqref{E2E2d2d3eq} and equations \eqref{22xi5} and \eqref{22xi4} simplify to
\begin{equation} \label{22xyz}
z = x^p + A \alpha^{-p} \text{ in } \rR_\gamma,  \hspace{10 pt}  y = z^p + B \text{ in } \rR_\beta, \hspace{10 pt}    x = y^p - p^{p-1} z^{p^2} \text{ in } \rR_\alpha,
\end{equation}
where $A = s_1\alpha + s_2 \beta+ s_3\alpha^{p}$ as above, $B = (\lambda' s_1)^2 \, \alpha^2 \beta^{-1}$ if $p = 2$ and $B = 0$ otherwise.  If $p = 2$ and $s_1 \ne 0$, then $\ord_2(B) = \frac{1}{7}$.  

We may assume that $\cV$ is a non-split extension, so some parameter $s_j$ is not zero in $k$.  The ordinals of $x,y,z$ can be estimated from \eqref{22xyz}, as follows.  First check that $\ord_p(z) \le 0$ and then arrive at
$$
\ord_p(x^p) = \ord_p(y^{p^2}) = \ord_p(z^{p^3}) = \ord_p\left(A \alpha^{-p} \right) \ge -\frac{p^2-1}{p^3-1}.	
$$

We use the rules for arithmetic on fractional elements from Appendix \ref{ArtinSchreier} to simplify \eqref{22xyz}.  Since $\ord_p(\gamma) > \ord_p(\beta)$, the second equation there is valid in $\rR_\gamma$ and $B=0$ in $\rR_\gamma$ even when $p = 2$.  Similarly, the third equation passes ot $R_\gamma$ and $p^{p-1} z^{p^2} = 0$ in $\rR_\gamma$.  In $\rR_\gamma$, we therefore have
\begin{equation} \label{new22xyz}
{\rm (i) } \hspace{4 pt} z = x^p + A \alpha^{-p}, \hspace{30 pt}  {\rm (ii) } \hspace{4 pt}  y = z^p, \hspace{30 pt}  {\rm (iii) }  \hspace{4 pt}  x = y^p.
\end{equation}
Since $py^{p^2-1}$ is integral, \eqref{new22xyz}(ii) implies that $y^{p^2} = z^{p^3}$ in $\rR_\gamma$ by {\bf C2}.  Similarly, \eqref{new22xyz}(iii) leads to $x^p = y^{p^2}$ in $\rR_\gamma$.  Thus, $z$  satisfies 
$
f_\alpha(Z) = Z^{p^3} - Z + A \alpha^{-p} = 0
$
in $\rR_\gamma$.

For the converse, let $z$ be a root of $f_\alpha$ in its splitting field over $F'$ and use the second and third congruences in \eqref{22xyz} as equations to defining $y$ and then $x$.  The congruences established above imply that $z = x^p + A$ in $\rR_\gamma$.  Thus, all non-zero points in the fiber of $V$ over $\psi'_\alpha$ occur when $z$ ranges over the roots of $f_\alpha$.   

If $s_2 = s_3 = 0$, then $A \alpha^{-p} = s_3$, so $f_\alpha$ is separable over the residue field of $F'$ and $L_\alpha/F'$ is unramified and possibly split.  In all other cases, we use Proposition \ref{AS} to verify that $L_\alpha/F'$ is a totally ramified elementary abelian $p$-extension of degree $p^3$.  By Proposition \ref{E2Field}(ii) the ramification index of $F'/\Q_p$ is given by
\begin{equation} \label{eF'}
e_{F'} = \begin{cases} \frac{1}{2}(p^3-1) &\text{if } p \text{ is odd},  \vspace{2 pt}
 \\ 7 &\text{if } p = 2.
 \end{cases}
\end{equation}
Proposition \ref{AS} holds with $m = 0$ and $w = C^{-1}$ and so
$$
\gf(L_\alpha/F') = v_{F'}(w) + 1 = e_{F'} \, \ord_p(w) + 1.
$$   
We have the following cases:
\begin{enumerate}[\hspace{3 pt} $\bullet$]
\item If $s_1 \ne 0$,  then $\ord_p(w) = \ord_p(\alpha^{p-1}) = \frac{p^2-1}{p^3-1}$.  Hence
$$
\gf(L_\alpha/F') = \begin{cases} \frac{1}{2}(p^2+1) &\text{if } p \text{ is odd}, \vspace{2 pt} \\ 4 &\text{if } p = 2. \end{cases}
$$

\vspace{2 pt}
 
\item If $s_1=0$ and $s_2 \ne 0$, then $\ord_p(w) = \ord_p(\alpha^p \beta^{-1}) = \frac{p-1}{p^3-1}$.  Hence
$$
\gf(L_\alpha/F')  = \begin{cases} \frac{1}{2}(p+1) &\text{if } p \text{ is odd}, \vspace{2 pt} \\ 2 &\text{if } p = 2.   \quad \qed \end{cases}   
$$
\end{enumerate}

\subsection{Local corners}  \label{LocalCorners}
In this subsection, $p = 2$ and $K = \Q_2$.  For $i = 1$ or $2$, denote by $\cE_i$ the simple group scheme corresponding to the Honda system $(\rM_i,\rL_i)$ of Proposition \ref{LocalSimple}, with parameter necessarily equal to 1 in $\F_2$.  By Propositions \ref{E1Field} and \ref{E2Field}, the corresponding Galois modules $E_1$ and $E_2$ have the same field of points, $F= \Q_2(\zeta,\varpi)$, where $\zeta \in \Mu_7$ satisfies $\zeta^3+\zeta+1 \equiv 0 \pmod{2}$ and $\varpi^7 = 2$.  Let $\rR = \F_2[\Delta]$, where $\Delta = \Gal(F/\Q_2) = \lr{\sigma, \tau}$, with the following action:
\begin{equation} \label{staction}
\tau(\zeta) = \zeta^2, \quad \tau(\varpi) = \varpi, \hspace{30 pt} \sigma(\zeta) = \zeta, \quad \sigma(\varpi) = \zeta \varpi
\end{equation}
Thus $\tau$ is a Frobenius, $\tau \sigma \tau^{-1} = \sigma^2$ and $\Delta$ is the non-abelian group of order 21.  

Recall the $\F_2[\Delta]$-homomorphism $\psi\!:  \gR_\lambda(F) \to E_1$ and the twisted action of $\zeta$ on $E_1$ defined in Remark \ref{Pa}.  We consistently use $P$ to denote the unique non-zero element of $E_1$ fixed by $\tau$, so $P = \psi_{-\varpi}$.  Then $P$, $\sigma(P) = \zeta P$, $\sigma^2(P) = \zeta^2 P$ is an $\F_2$-basis for $E_1$ affording the matrix representations 
\begin{equation} \label{stEnd1}
s_1 = \rho_{E_1}(\sigma) = \left[\begin{smallmatrix}0&0&1\\ 1&0&1\\0&1&0 \end{smallmatrix}\right] \quad \text{and} \quad t = \rho_{E_1}(\tau) = \left[\begin{smallmatrix} 1&0&0\\ 0 &0&1\\0&1&1\end{smallmatrix}\right].
\end{equation}
Thus, $c_1 = \sigma^3+\sigma+1$ annihilates $E_1$.  In general, $\sigma(\epsilon P) =  \epsilon \zeta P$ and $\tau(\epsilon P) = \epsilon^2 P$ for all $\epsilon$ in $\F_8$.  

Similarly, $P' = \psi'_{-\varpi^3}$ is the unique non-zero element of $E_2$ fixed by $\tau$.  Then $P'$, $\zeta P'$, $\zeta^2 P'$ is an $\F_2$-basis for $E_2$ affording the matrix representations 
\begin{equation} \label{stEnd2}
s_2 = \rho_{E_2}(\sigma) = \left[\begin{smallmatrix}1&0&1\\ 1&1&1\\0&1&1 \end{smallmatrix}\right] = s_1^3 \quad \text{and} \quad  \rho_{E_2}(\tau) = t.
\end{equation}
Here, $c_2 = \sigma^3+\sigma^2+1$ annihilates $E_2$.  Also, $\sigma(\epsilon P') = \epsilon \zeta^3 P'$ and $\tau(\epsilon P') = \epsilon^2 P'$ for all $\epsilon$ in $\F_8$.

Recall that the irreducible representations of $\Delta$ over $\F_2$ are given by $E_1$ and $E_2$, as well as the trivial representation and a 2-dimensional representation $D$ that splits over $\F_4$.  The following $\rR$-module invariants, which we call {\em corners}, are used to impose the information about extensions of $\cE_i$ by $\cE_j$ over $\Z_2$ obtained in this section on the global extension problem in \S\ref{3Folds}.

\begin{Def}  \label{Cors}
Let $M$ be a finitely generated $\rR$-module, $c_1 = \sigma^3+\sigma+1$ and $c_2 = \sigma^3+\sigma^2+1$.  The {\em corners} are the following $\F_2$-vector subspaces of $M$:  
\begin{equation*} 
C_j(M) = \{ m \in M \, \vert \, \tau(m) = m \text{ and }c_j(m) = 0 \} \quad \text{for } j = 1,2.
\end{equation*}
\end{Def}

Because $\sigma$ has odd order, $M = M_0 \oplus M'$, where $M_0$ is the submodule on which $\sigma$ acts trivially and $M' = (\sigma-1)(M)$.  Since $\tau$  normalizes $\langle \sigma\rangle$, we have $\rR m_j  \simeq E_j$  for any non-zero $m_j$ in $C_j(M)$.  Indeed, the annihilator of $m_j$ is the left ideal generated by $\tau-1$ and $c_j$.  If $\rR \, C_j(M)$ denotes the $\rR$-module generated by $C_j(M)$, then
$$
M' =  \rR \, C_1(M)  \oplus  \rR \, C_2(M)  \simeq E_1^{n_1} \oplus E_2^{n_2},
$$
where $n_j = \dim_{\F_2} (C_j(M))$. 

\vspace{5pt}

By Theorem \ref{CondBd}, if $\cV$ represents a class in $\Ext^1_{[2],\Z_2}(\cE_i,\cE_j)$,  then the field generated by the points of $V$ is contained in the maximal elementary 2-extension $T$ of $F$ with ray class conductor exponent at most 6.   We write $\sigma$ for an element of order 7 in $\Gal(T/\Q_2)$ projecting to the element $\sigma$ in $\Delta$ chosen above.   

For $n \ge 1$, let $U_F^{(n)} = 1+ \varpi^n \cO_\F$ be the standard filtration of local units, with $U_F^{(0)} = U_F$.  The Artin symbol of  local class field theory gives an isomorphism   
$$
\Theta_{T/F}\!: \,  F^\times/U_F^{(6)} F^{\times 2} \, \xrightarrow{\sim} \, \Gamma=\Gal(T/F), 
$$
with $\delta \in \Delta$ acting by $\delta\Theta_{T/F}(x)\delta^{-1} = \Theta_{T/F}(\delta(x))$. The $\rR$-module structure of the inertia group $\Gamma_0$ will be described in terms of the following corners:
$$
g_1 =\Theta_{T/F}(1+\varpi+\varpi^3), \quad g_2 = \Theta_{T/F}(1+\varpi^3), \quad  g_3 = \Theta_{T/F}(1+\varpi^5).
$$

\begin{prop} \label{Max2}
Let $T_n$ be the maximal subfield of $T$ with ray class conductor exponent at most $n$ over $F$.  We have the diagram of fields and Galois groups:
\begin{center}
\begin{tikzpicture}[scale=1, thin, baseline=(current  bounding  box.center)]	
    \draw (0,0) -- (0,3.7);
    \draw (0,0) node {$\bullet$};   \draw (.35,0) node {$\Q_2$};
    \draw (-.3,.45) node {$\Delta$};
    \draw (0,.75) node {$\bullet$};  \draw (1.95,.7) node {$F = \Q_2(E_1) = \Q_2(E_2)$};  
    \draw(-.6,1.05) node {\rm unram};
    \draw (0,1.4) node {$\bullet$};      \draw (-.33,1.45) node {$T_0$};
    \draw (.7,2.15) -- (1,2.15) -- (1,3.75) -- (.7,3.75); \draw (1.95,2.95) node {$\rR g_2 \oplus \rR g_3$};
    \draw (0,2.15) node {$\bullet$};    \draw (-.33,2.2) node {$T_2$};
    \draw (0,2.9) node {$\bullet$};    \draw (-.33,2.95) node {$T_4$};
    \draw (.4,3.3) node {$\rR g_3$};  
    \draw (0,3.7) node {$\bullet$};   \draw (-1.88,3.75) node {$L = \Q_2(W) \, \subseteq  \, T = T_6$};  
\end{tikzpicture}
\end{center}
Then $g_0 =\Theta_{T/F}(\varpi)$ is a Frobenius element and 
$$
\Gamma = \Gal(T/F) = \rR g_0 \oplus \rR g_1 \oplus \rR g_2 \oplus \rR g_3.
$$  
There are $\rR$-module isomorphisms $\rR g_0 \simeq \F_2$, $\rR g_1 \simeq E_1$ and $\rR g_2 \simeq \rR g_3 \simeq E_2$.   
\end{prop}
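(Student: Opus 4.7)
The strategy is to translate the question through local reciprocity and then exploit the unit filtration. Write $\cG = F^\times/U_F^{(6)} F^{\times 2}$, so that $\Theta_{T/F}$ transports the $\Delta$-action on $\Gamma$ to the natural $\Delta$-action on $\cG$. Using $F^\times = \varpi^{\Z} \times \Mu_7 \times U_F^{(1)}$ and the oddness of $|\Mu_7|$ (so $\Mu_7 \subseteq F^{\times 2}$), the class of $\varpi$ contributes $\F_2$ and the remaining dimensions come from $U_F^{(1)}/U_F^{(6)}(U_F^{(1)})^2$. Since $\ord_F(2) = 7$, the identity $(1+\varpi^n x)^2 = 1 + \varpi^{2n} x^2 + \varpi^{n+7} x$ shows that for $1\le n \le 3$ squaring induces the Frobenius bijection $U_F^{(n)}/U_F^{(n+1)} \xrightarrow{\sim} U_F^{(2n)}/U_F^{(2n+1)}$. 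The even graded pieces at levels $2, 4, 6$ are therefore absorbed into $(U_F^{(1)})^2$ modulo $U_F^{(6)}$, leaving only the odd levels $n = 1, 3, 5$, each isomorphic to $\F_8$. Hence $\dim_{\F_2} \cG = 1 + 9 = 10$.

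On each graded piece $U_F^{(n)}/U_F^{(n+1)} \simeq \F_8$, the relations $\sigma(\varpi) = \zeta \varpi$ and $\tau(\varpi) = \varpi$ show that $\sigma$ acts by multiplication by $\zeta^n$ and $\tau$ acts as the Frobenius $x \mapsto x^2$. Since $\zeta$ has minimal polynomial $x^3+x+1$ over $\F_2$, level $1$ is $\rR$-isomorphic to $E_1$; since $\zeta^3$ and $\zeta^5$ have minimal polynomial $x^3+x^2+1$, levels $3$ and $5$ are $\rR$-isomorphic to $E_2$. For the Frobenius: $\tau(\varpi)=\varpi$ and $\sigma(\varpi)/\varpi = \zeta \in \Mu_7 \subseteq F^{\times 2}$, so $\Delta$ fixes the class of $\varpi$; thus $\rR g_0 \simeq \F_2$ and $g_0$ is a Frobenius lift by the standard local-reciprocity convention sending uniformizer to Frobenius.

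The key computational step is to verify $c_1 g_1 = c_2 g_2 = c_2 g_3 = 0$ in $\Gamma$. Each defining unit $1+\varpi+\varpi^3$, $1+\varpi^3$, $1+\varpi^5$ is manifestly $\tau$-fixed, and the annihilator identities amount to showing that certain triple products lie in $U_F^{(6)} F^{\times 2}$. For $g_2$ and $g_3$ the factors are pure at levels $3$ and $5$ and the computation reduces to a single linear $\F_8$-sum:
\[
(1+\varpi^3)(1+\zeta^6\varpi^3)(1+\zeta^2\varpi^3) \equiv 1 + (1+\zeta^2+\zeta^6)\varpi^3 \pmod{\varpi^6},
\]
\[
(1+\varpi^5)(1+\zeta^3\varpi^5)(1+\zeta\varpi^5) \equiv 1 + (1+\zeta+\zeta^3)\varpi^5 \pmod{\varpi^6},
\]
both of which vanish using $\zeta^3 = \zeta + 1$ and $\zeta^6 = \zeta^2 + 1$. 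For $g_1$, since $1 + \varpi + \varpi^3$ mixes levels, one expands the triple product $(1+\varpi+\varpi^3)(1+\zeta\varpi+\zeta^3\varpi^3)(1+\zeta^3\varpi+\zeta^2\varpi^3)$ fully to order $\varpi^5$; after systematic $\F_8$-arithmetic, every coefficient at $\varpi^1, \varpi^3, \varpi^4, \varpi^5$ cancels and the residue is $1 + \zeta^5 \varpi^2 \equiv (1+\zeta^6 \varpi)^2 \pmod{U_F^{(6)}}$, which is a square. By the discussion following Definition \ref{Cors}, a non-zero element of $C_j(\Gamma)$ generates an $\rR$-submodule isomorphic to $E_j$; so $\rR g_1 \simeq E_1$ and $\rR g_2 \simeq \rR g_3 \simeq E_2$.

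Finally, I show the sum $\rR g_0 + \rR g_1 + \rR g_2 + \rR g_3$ fills $\Gamma$. Via $\Theta_{T/F}$, local reciprocity identifies the upper-numbering ramification jumps of $\Gamma$ with the odd levels $1, 3, 5$ of the unit filtration, giving the four-step chain $\Gal(T_0/F)$, $\Gal(T_2/T_0)$, $\Gal(T_4/T_2)$, $\Gal(T/T_4)$ with successive graded pieces $\F_2, E_1, E_2, E_2$ of dimensions $1,3,3,3$. The images of $g_0, g_1, g_2, g_3$ in the corresponding graded quotients are non-zero (each $u_i$ has coefficient $1$ at its lowest surviving odd level), and any non-zero $\tau$-fixed vector generates its simple module under $\rR$. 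Thus each $\rR g_i$ surjects onto the corresponding graded piece, so $\rR g_0 + \rR g_1 + \rR g_2 + \rR g_3$ surjects onto the whole associated graded of $\Gamma$; a dimension count $1+3+3+3 = 10 = \dim_{\F_2} \Gamma$ then forces the sum to be direct and equal to $\Gamma$. The main obstacle is the identity $c_1 g_1 = 0$: because $u_1$ is not pure at a single level, the cancellations in the triple product are subtle and require simultaneously tracking cross terms at even levels and their absorption into squares, in contrast to the one-line computations available for $g_2$ and $g_3$.
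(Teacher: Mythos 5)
Your proof is correct and follows essentially the same route as the paper's: local reciprocity, the filtration of $F^\times/U_F^{(6)}F^{\times 2}$ by the unit groups $U_F^{(n)}$, and the corner formalism of Definition \ref{Cors}. The only difference is that you carry out explicitly the verification the paper dismisses with ``one checks'' --- your triple-product identities are right, including the coefficient $\zeta^6+\zeta=\zeta^5$ at $\varpi^2$ showing $c_1g_1$ is a square --- and you close with a graded dimension count where the paper appeals directly to the structure result $M'=\rR\,C_1(M)\oplus\rR\,C_2(M)$.
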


\begin{proof}
If $L/F$ is an abelian extension of conductor $n$, then $U_F^{(n)}$ is contained in the group of norms $N_{L/K}(L^\times)$ by class field theory \cite[\!XV \S2]{Ser1}. For $m\le 3$, the Artin map $\Theta_{T/F}$ therefore  induces an isomorphism $F^\times/U_F^{(2m)}F^{\times 2} \xrightarrow{\sim} \Gal(T_{2m}/F)$.  Furthermore,
$$
 U_F^{(2m+1)} F^{\times 2}/U_F^{(6)}F^{\times 2} \simeq\Gal(T/T_{2m}),
$$ 
since $U_F^{(2m)}U_F^2 = U_F^{(2m+1)} U_F^2$.  The action of $\rR$ on $g_0$ is trivial because $\sigma(\varpi) = \zeta \varpi$ is in $\varpi F^{\times 2}$ and $\tau(\varpi) = \varpi$.  As an $\rR$-module, the inertia group 
$$
\Gamma_0 = \Gal(T/T_0) \simeq U_F/U_F^{(6)}U_F^2
$$ 
is a direct sum of copies of $E_1$ and $E_2$, since $\sigma$ has no non-trivial fixed points in it.  In terms of the corner invariants of $\rR$-modules in Definition \ref{Cors}, one checks that $C_1(\Gamma_0) = \lr{g_1}$ and $C_2(\Gamma_0) = \lr{g_2,g_3}$. 
\end{proof} 

As needed for our applications, let $\cE = \cE_1 \oplus \cE_2$ and $\gh = \Hom_{\F_2}(E,E)$.   Suppose that $\cW$ represents a class in $\Ext^1_{[2],\Z_2}(\cE,\cE) $ and let $L = \Q_2(W)$.   Then 
$$
\Q_2(E_1) = \Q_2(E_2) = \Q_2(E) = F \subseteq L \subseteq T.
$$
The extension class of the Galois module $W$ corresponds to a cohomology class $[\widetilde{\chi}_\cW]$ in $H^1(\Gal(L/\Q_2),\gh)$.  Since $\Gamma = \Gal(T/F)$ acts trivially on $\gh$ and $\Delta = \Gal(F/\Q_2)$ has odd order, $H^1(\Delta,\gh) = 0$ and so \eqref{InfRes} gives 
\begin{equation} \label{ToToverF}
0 \to H^1(\Gal(T/\Q_2),\gh) \xrightarrow{{\rm res}_\Gamma} H^1(\Gamma,\gh)^\Delta = \Hom_\rR(\Gamma,\gh),
\end{equation} 
with $\rR = \F_2[\Delta]$.  Also, the inflation map $H^1(\Gal(L/\Q_2),\gh) \xrightarrow{{\rm inf}^T_L} H^1(\Gal(T/\Q_2),\gh)$ is injective.  

\begin{Def} \label{HondaChar}
The {\em Honda character} of $\cW$, denoted $\chi_{\cW}\!: \, \Gamma \to \gh$, is given by $\chi_\cW = \res_\Gamma \, {\rm inf}^T_L([\widetilde{\chi}_\cW])$.
\end{Def}

\noindent Note that $[\widetilde{\chi}_\cW]$ is trivial if and only if $\chi_\cW = 0$.   For elements $g$ in $\Gamma$, the representation afforded by $W$ has the form  
$
\rho_\cW(g) = \left[\begin{smallmatrix} I & \chi_\cW(g) \\  0 & I \end{smallmatrix}\right]
$
in the parabolic matrix group $\cP = \cP_{E,E}$ of \eqref{defP}. 

Let $\gh_{ij} = \Hom_{\F_2}(E_i,E_j) \simeq M_3(\F_2)$, via the isomorphism afforded by the special bases for $E_1$ and $E_2$ in \eqref{stEnd1} and \eqref{stEnd2}, respectively.   The block decomposition
$$
\chi_\cW(g) = \left[\begin{array}{c | c} \chi_{11}(g) & \chi_{21}(g) \\ \hline \chi_{12}(g)  & \chi_{22}(g) \end{array}\right] \text{ in } M_6(\F_2) \simeq \Hom_{\F_2}(E,E),  \vspace{3 pt}
$$
defines $\rR$-homomorphisms $\chi_{ij}\!: \, \Gamma \to \gh_{ij}$.

Table \ref{hStructure} gives the corner spaces of each $\gh_{ij}$ in terms of the $3 \times 3$ identity $I$ and the matrix $t$ of \eqref{stEnd1}.  The $\F_2$-span of a subset $S$ of $\gh_{ij}$ is denoted by $\lr{S}$ and $D$ is the unique 2-dimensional irreducible $\Delta$-module over $\F_2$. 

\begin{table}[h] 
\begin{center}
{\begin{caption}{$\rR$-module decomposition of $\gh_{ij}$ \label{hStructure}} \end{caption} }
\end{center}
$$
\begin{array}{| c | c | c | c | }
\hline
                              &    \rR\text{-module decomposition} & C_1 & C_2 \\
\hline
\gh_{11} & \F_2 \oplus D \oplus E_1 \oplus E_2 & \lr{t^2} & \lr{t} \\
\hline 
\gh_{12} & E_1 \oplus E_1 \oplus E_2 & \lr{I,t} & \lr{t^2} \\
\hline
\gh_{21} & E_1 \oplus E_2 \oplus E_2 & \lr{t} & \lr{I,t^2} \\
\hline
\gh_{22} & \F_2 \oplus D \oplus E_1 \oplus E_2 &  \lr{t} & \lr{t^2} \\
\hline
\end{array}
$$
\end{table}

\noindent {\em Verification of Table} \ref{hStructure}.
Let $[f]$ be the matrix representation of $f $ in $\gh_{ij}$ with respect to the special bases for $E_1$ and $E_2$.  An element $\delta$ of $\Delta$ acts on $f$ by $(\delta f)(x) = \delta(f(\delta^{-1} x))$.  In terms of matrices, $[f] \mapsto \rho_{E_j}(\delta) \,  [f] \rho_{E_i}(\delta)^{-1}$.  We have $t = \rho_{E_1}(\tau) = \rho_{E_2}(\tau)$ and $s_i = \rho_{E_i}(\sigma)$.  Hence $\tau$ acts on $[f]$ via conjugation by $t$ in all cases.  Matrices in the corner spaces commute with $t$ and thus are polynomials in $t$.  The action of $\sigma$ on $\gh_{ij}$ is given by $[f] \mapsto s_j [f] s_i^{-1}$.  For each power $t^n$, one checks whether it is annihilated by the characteristic polynomial $c_1$ or $c_2$ of Definition \ref{Cors} to determine which corner space $t^n$ is in.   For $\gh_{11}$ and $\gh_{22}$, after taking the span of the corners, there remains a 3-dimensional space  that commutes with $s$, so consists of  polynomials in $s$ and therefore is isomorphic to $\F_2  \oplus D$.  \qed

\vspace{5 pt}

Thanks to $\rR$-linearity, a Honda character $\chi_\cW$ is determined by its values on the $\rR$-module generators $g_0$, $g_1$, $g_2$, $g_3$ for $\Gamma$ used in Proposition \ref{Max2}.  

\begin{lem}  \label{LocalChi}
If $\chi_\cW$ is a  Honda character, then $\chi_\cW(g_2)$ has the form $\left[ \begin{smallmatrix} * & 0 \\ * & *  \end{smallmatrix} \right]$  and  $\chi_\cW(g_3) =0$ or  $\left[ \begin{smallmatrix} 0 & 0 \\ t^2 & 0  \end{smallmatrix} \right]$.
\end{lem}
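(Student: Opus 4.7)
The plan is to combine the $\rR$-linearity of each block $\chi_{ij}$ with the conductor bounds from Propositions \ref{E1E1Field}--\ref{E2E2Field}. Since $\cE = \cE_1 \oplus \cE_2$, the extension class of $\cW$ decomposes as
$$
\Ext^1_{[2],\Z_2}(\cE,\cE) = \bigoplus_{i,j \in \{1,2\}} \Ext^1_{[2],\Z_2}(\cE_i,\cE_j),
$$
so each block $\chi_{ij}$ is itself the Honda character of a subextension $\cV_{ij}$ of $\cE_i$ by $\cE_j$. Its kernel cuts out the field of points $L_{ij} \subseteq T$, whence $\chi_{ij}$ has ray class conductor bounded by $\gf(L_{ij}/F)$.

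First I would use $\rR$-linearity to locate the possible images. Proposition \ref{Max2} identifies $\rR g_2 \simeq \rR g_3 \simeq E_2$ with $g_2$ and $g_3$ the distinguished corner generators, so $g_2, g_3 \in C_2(\Gamma)$. Because each $\chi_{ij}$ is $\rR$-linear, it carries $C_2(\Gamma)$ into $C_2(\gh_{ij})$, and Table \ref{hStructure} records
$$
C_2(\gh_{11}) = \lr{t}, \quad C_2(\gh_{12}) = \lr{t^2}, \quad C_2(\gh_{21}) = \lr{I,t^2}, \quad C_2(\gh_{22}) = \lr{t^2}.
$$

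Next I would invoke the local conductor bounds. By Proposition \ref{Max2}, $g_2 \in \Gal(T/T_2)$ and $g_3 \in \Gal(T/T_4)$, so any character of $\Gamma$ of ray class conductor at most $n$ annihilates $g_k$ as soon as $g_k \in \Gal(T/T_n)$. Specializing to $p = 2$, we have $\gf(L_{21}/F) \le p^2-p = 2$ from Proposition \ref{E2E1Field}, $\gf(L_{11}/F) \le p^2 = 4$ and $\gf(L_{22}/F) \le 4$ from Propositions \ref{E1E1Field} and \ref{E2E2Field}, and $\gf(L_{12}/F) \le p^2+p = 6$ from Proposition \ref{E1E2Field}. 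These force $\chi_{21}(g_2) = \chi_{21}(g_3) = 0$ and $\chi_{11}(g_3) = \chi_{22}(g_3) = 0$, while no further constraint is imposed on $\chi_{12}$.

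Assembling the blocks, the vanishing of the top-right block $\chi_{21}(g_2)$ gives $\chi_\cW(g_2)$ the stated form $\left[\begin{smallmatrix} * & 0 \\ * & * \end{smallmatrix}\right]$. For $g_3$, every block vanishes except possibly $\chi_{12}(g_3) \in \lr{t^2}$, which is one-dimensional, yielding the two alternatives $\chi_\cW(g_3) = 0$ or $\left[\begin{smallmatrix} 0 & 0 \\ t^2 & 0 \end{smallmatrix}\right]$. The whole argument is bookkeeping---matching the ramification filtration of $\Gamma$ against the conductor bounds and the corner structure of $\gh_{ij}$---and uses no ideas beyond the infrastructure already developed in \S\ref{Honda} and \S\ref{LocalFieldofPoints}. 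The main effort is therefore properly organizing the block decomposition and verifying that the decomposition of extension classes matches the block decomposition of Honda characters.
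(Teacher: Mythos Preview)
Your argument is correct and follows essentially the same route as the paper's proof: use the conductor bounds from Propositions \ref{E1E1Field}--\ref{E2E2Field} to force $\chi_{21}(g_2)=\chi_{21}(g_3)=0$ and $\chi_{11}(g_3)=\chi_{22}(g_3)=0$, then use $\rR$-linearity together with $C_2(\gh_{12})=\lr{t^2}$ from Table \ref{hStructure} to pin down $\chi_{12}(g_3)$. The paper's version is slightly terser, omitting the explicit $\Ext$ decomposition you spell out, but the content is the same.
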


\begin{proof}
Refer to the conductor bounds in \S \ref{ptsEEp} using $p=2$.  By Proposition \ref{E2E1Field}, the field of points of an extension of $\cE_2$ by $\cE_1$ annihilated by 2 has ray class conductor exponent at most 2 over $F$.  Since $g_3$ and $g_2$ act trivially on that field, $\chi_{21}(g_3)$ and $\chi_{21}(g_2)$ vanish.  This explains $\chi_\cW(g_2)$.   By Propositions \ref{E1E1Field} and \ref{E2E2Field} extensions of $\cE_1$ by $\cE_1$ or $\cE_2$ by $\cE_2$ give rise to subfields of $T$ of conductor exponent at most 4 over $F$ and so $\chi_{11}(g_3) = \chi_{22}(g_3) = 0$.  In Table \ref{hStructure}, $t^2$ is the unique corner of $\gh_{12}$ generating an $\rR$-submodule isomorphic to $E_2$.  By Proposition \ref{Max2},  $\rR g_3 \simeq E_2$.  Since $\chi_{12}$ is an $\rR$-module map, $\chi_{12}(g_3)$ is trivial or equals $t^2$.
\end{proof}

\section{Arithmetic of favorable fields} \label{FavFieldsSection} 
\numberwithin{equation}{section}

Throughout this section, $F$ denotes the Galois closure of a favorable field  $F_0$ of discriminant $\pm 2^{2g} N$, with $N$ odd and squarefree (see Definition \ref{FavField}).   Write $\cI_\gp = \cI_\gp(F/\Q)$ for the inertia group and $\cD_\gp = \cD_\gp(F/\Q)$ for the decomposition group at a prime $\gp$ over $2$ in $F$.  We discuss the number theory of $F$ and some applications to favorable abelian varieties.

\begin{prop} \label{Fproperties}
The field $F$ enjoys the following properties.
\begin{enumerate}[{\rm i)}]
\item The permutation action of $\Gal(F/\Q)$  on the left cosets of $\Gal(F/F_0)$  is  isomorphic to $\cS_{2g+1}$. 

\vspace{2 pt}

\item The completion $F_\gp$ is isomorphic to $\Q_2(\Mu_{2g+1},\pi)$ with $\pi^{2g+1}=2$ and the inertia group $\cI_\gp$ is generated by a $(2g+1)$-cycle.  Moreover, $\cD_\gp$ is the metacyclic group $\cI_\gp \rtimes \lr{\Frob_\gp}$, where $\Frob_\gp$ is any Frobenius at $\gp$.  The order of $\Frob_\gp$ is the order of $2$ in $(\Z/(2g+1)/\Z)^\times$.  

\vspace{2 pt}

\item Write $N^* = \pm N \equiv 1+4 a \pmod{8}$.  Then $\sqrt{N^*}$ is in $F$ and $a= 0$ if $\Frob_\gp$ is an even permutation, but $a = 1$ otherwise.

 \vspace{2 pt}

\item The inertia group $\cI_\gq = \cI_\gq(F/\Q)$ at each prime $\gq \vert N$ is generated by a transposition $\sigma_\gq$.
\end{enumerate} 
\end{prop}

\begin{proof}
\noindent  We first verify items (ii)--(iv) and then use them to prove (i).

\vspace{2 pt}

\noindent ii) Since every unit of $\Z_2$ is a $(2g+1)$-power, there is a prime element $\pi$ over 2 in the completion $\wtF_0$ of $F_0$ at the prime over 2, such that $\pi^{2g+1} = 2$ and we have $\wtF_0 = \Q_2(\pi)$.   Lemma \ref{CompletionClosure} below shows that $F_\gp = \Q_2(\Mu_{2g+1}, \pi)$.  Thus $\cI_\gp = \lr{\sigma}$ is cyclic of order $2g+1$.  We have $\sigma(\pi) = \zeta \pi$ for a generator $\zeta$ of $\Mu_{2g+1}$ and $\sigma(\zeta) = \zeta$.  Since $\sigma$ acts transitively on the roots of $x^{2g+1}-2$, it is represented by a $(2g+1)$-cycle.   Choose a Frobenius $\tau$ at $\gp$ to satisfy $\tau(\zeta) = \zeta^2$ and $\tau(\pi) = \pi$, so $\tau \sigma \tau^{-1} = \sigma^2$.  Any other choice of Frobenius lies in $\tau \cI_\gp$ and so is conjugate to $\tau$.

\vspace{2 pt}

\noindent iii) Since the quadratic field $\Q(\sqrt{N^*})$ is contained in $F$, it is unramified over 2 and so $N^* \equiv 1 \pmod{4}$.  In fact, $N^*$ is a square in $\Q_2$ precisely when $\tau$ acts trivially on $\sqrt{N^*}$, or equivalently, $\tau$ is an even permutation.  

\vspace{2 pt}

\noindent iv) Since $q$ exactly divides $N$, \cite[Satz II]{vdW} shows that $\sigma_\gq$ is a transposition. 

\vspace{2 pt}

\noindent i) By items (ii) and (iv) of the Proposition, $\Gal(F/\Q)$ contains a $2g+1$-cycle and a transposition.  If the action of $\Gal(F/\Q)$ on the roots of a minimal polynomial for $F_0/\Q$ is primitive, then $\Gal(F/\Q) \simeq \cS_{2g+1}$.  Primitivity is obvious when $2g+1$ is prime, but holds in general.  If not, there is an intermediate field $K$ such that $F_0 \supsetneq K \supsetneq \Q$, so $n = \fdeg{K}{\Q}$ and $\fdeg{F_0}{K}$ are odd and at least 3.  Because $N$ is odd and squarefree and 
$$
2^{2g} N^* = \disc_{F_0/\Q} = N_{K/\Q}(\disc_{F_0/K}) \, \disc_{K/\Q}^{[F_0:K]},
$$
we find that $\pm \disc_{K/\Q}$ is a power of 2.  But the unique prime over 2 in $K$ is totally ramified and tame, so $\disc_{K/\Q} = \pm 2^{n-1}$.  Hence the root discriminant of $K/\Q$ is $2^{1-\frac{1}{n}}$, contradicting Odlyzko's lower bound \cite{dyd}, since $K$ has at least one real place.
\end{proof}

\begin{Not}  \label{PairResDef}
For the rest of this section, $K$ denotes a subfield of $F$ such that $\Gal(F/K)$ is the centralizer of a transposition in $\Gal(F/\Q) \simeq \cS_{2g+1}$ and $K$ is called a {\em pair-resolvent} for $F$.
\end{Not}

Since the symmetric group is doubly transitive, $K$ is well-defined up to isomorphism.  Moreover $F$ is the Galois closure of $K/\Q$.  Let $f$ be an irreducible polynomial over $\Q$ of degree $2g+1$ with splitting field $F$.  Given distinct roots  $a_1, a_2$ of $f$, the field $\Q(a_1+a_2)$ is a pair-resolvent for $F$ and has degree $\binom{2g+1}{2}$ over $\Q$.  More generally, if $k'$ is the splitting field of an separable, irreducible polynomial over a field $k$ and $\Gal(k'/k)$ is doubly transitive, one can obtain a well-defined pair-resolvent for $k'$ over $k$ from the roots of $f$ in this way.

The next several Lemmas provide arithmetic information about $K$.  For convenience, take $H = \Gal(F/K)$ to be the subfield of $F$ fixed by the centralizer $C$ of the transposition (12).

\begin{lem} \label{ramK}
The ramification index $e_\gp(K/\Q) = 2g+1$ for primes $\gp \vert 2$ in $K$.  
\end{lem}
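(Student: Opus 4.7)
My plan is to descend through the Galois closure $F$, where Proposition~\ref{Fproperties} pins down the inertia at primes over 2 completely. Given a prime $\gP$ of $F$ above $\gp$, the multiplicativity $e_\gP(F/\Q) = e_\gP(F/K) \cdot e_\gp(K/\Q)$ combined with $e_\gP(F/\Q) = 2g+1$ from Proposition~\ref{Fproperties}(ii) reduces the claim to showing that $F/K$ is unramified at $\gP$; equivalently, that $\cI_\gP(F/K) = \cI_\gP(F/\Q) \cap \Gal(F/K)$ is trivial.

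Next I would translate this into a small group-theoretic computation inside $\Gal(F/\Q) \cong \cS_{2g+1}$. By Proposition~\ref{Fproperties}(ii), $\cI_\gP(F/\Q) = \lr{\sigma}$ for some $(2g+1)$-cycle $\sigma$, while by construction $\Gal(F/K) = C$ is the centralizer of the transposition $(1\,2)$. The question thus becomes whether a nontrivial power of a full $(2g+1)$-cycle can commute with $(1\,2)$.

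This reduces to an elementary observation: $\sigma^k \in C$ iff $\sigma^k$ stabilizes $\{1,2\}$ setwise. Either $\sigma^k$ fixes both $1$ and $2$, which forces $\sigma^k = 1$ since the orbit of $1$ under $\sigma$ has full length $2g+1$, or $\sigma^k$ swaps them. In the swap case, letting $j \in \{1,\ldots,2g\}$ be the integer with $\sigma^j(1) = 2$, the conditions $\sigma^k(1)=2$ and $\sigma^k(2)=1$ yield $k \equiv j$ and $k \equiv -j \pmod{2g+1}$, hence $2j \equiv 0 \pmod{2g+1}$, which is impossible because $2g+1$ is odd. Thus $\lr{\sigma}\cap C = \{1\}$, so $e_\gP(F/K) = 1$ and therefore $e_\gp(K/\Q) = 2g+1$ for every prime $\gp$ of $K$ over $2$.

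I do not anticipate any real obstacle here: all the arithmetic input is packaged in Proposition~\ref{Fproperties}, and what remains is the purely combinatorial observation that $2$ is invertible modulo the odd number $2g+1$.
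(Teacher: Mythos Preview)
Your proof is correct and follows essentially the same route as the paper: pass to the Galois closure $F$, invoke Proposition~\ref{Fproperties}(ii) to identify inertia at $\gP\mid 2$ with a $(2g+1)$-cycle, and then show it meets the centralizer $C$ of $(1\,2)$ trivially so that $e_\gP(F/K)=1$. The only cosmetic difference is in the final combinatorial step---the paper argues via the cycle decomposition of $\sigma^j$ (all cycles have odd length $(2g+1)/d$, hence none can be a transposition), whereas you argue directly with residues modulo $2g+1$; both hinge on $2g+1$ being odd.
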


\begin{proof}
Let the prime $\gP$ lie over $\gp$ in $F$.  By Proposition \ref{Fproperties}(ii), the inertia group $\cI_\gP = \cI_\gP(F/\Q)$ is generated by a $(2g+1)$-cycle $\sigma$.  If $d = \gcd\{2g+1,j\}$, then $h = \sigma^j$ is a product of $d$ disjoint cycles of length $(2g+1)/d$.  Thus, $h$  preserves the set $\{1,2\}$ only if $h= 1$ and so 
$\cI_\gP \cap C = \{1\}$.  We conclude that $\gP$ is unramified in $F/K$ and the ramification index $e_\gp(K/\Q) = e_\gP(F/\Q) = 2g+1$.
\end{proof}

\begin{Rem} \label{OnePrime}
Let $r$ be the multiplicative order of 2 in $(\Z/(2g+1)\Z)^\times$.  One can show that there is one prime over $2$ in $K$ exactly when $2g+1$ is prime and one of the following holds: \, i) $r = g$, with $g$ odd; \, or \,  ii) $r = 2g$.  We need only $g = 3$, which can be checked directly.
\end{Rem}

\begin{lem}  \label{TooReal}
Denote the ideal class group of $K$ by $\cC_K$ and let $\Omega_K$ be the maximal elementary $2$-extension of $K$ of ray class conductor $\gp^{2g} \infty$, where $\gp$ is a fixed prime over $2$ in $K$ and $f$ is its residue degree in $K/\Q$.  If $F_0$ admits exactly $r_1$ real embeddings, then 
$$
\dim_{\F_2} \Gal(\Omega_K/K) \ge \dim_{\F_2} \cC_K[2] + (g-f)(g+1) +
\textstyle{\frac{1}{4}}\left(r_1-1\right)^2.
$$
\end{lem}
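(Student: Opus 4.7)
The plan is to identify $\Gal(\Omega_K/K) \simeq \mathrm{Cl}_K^{\gm}/2$ via global class field theory, where $\gm = \gp^{2g}\infty$, and then estimate the $\F_2$-rank from the standard ray class exact sequence
$$\cO_K^* \xrightarrow{\phi} T \to \mathrm{Cl}_K^{\gm} \to \cC_K \to 0,$$
with $T = (\cO_K/\gp^{2g})^* \times \{\pm 1\}^{r_1(K)}$. Set $X := T/\phi(\cO_K^*)$. Tensoring the short exact sequence $0 \to X \to \mathrm{Cl}_K^{\gm} \to \cC_K \to 0$ with $\F_2$ and reading off the Tor long exact sequence gives
$$\dim_{\F_2}\mathrm{Cl}_K^{\gm}/2 \ = \ \dim_{\F_2}X/2 \ + \ \dim_{\F_2}\cC_K[2] \ - \ \dim_{\F_2}\mathrm{image}(\cC_K[2] \to X/2),$$
so the lemma follows once I show $\dim X/2 \geq \dim \cC_K[2] + (g-f)(g+1) + \tfrac{(r_1-1)^2}{4}$.

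First I compute $\dim T/2$. By Lemma \ref{ramK}, $e_\gp(K/\Q) = 2g+1$, so for $1 \leq i < 2g+1$ the squaring map on the principal-unit graded piece $U^{(i)}/U^{(i+1)} \cong k \cong \F_{2^f}$ equals the absolute Frobenius, which is bijective. Thus squares modulo $U^{(2g)}$ hit precisely the even-index pieces, and the $g$ surviving odd-index pieces give $\dim_{\F_2}(\cO_K/\gp^{2g})^*/2 = gf$, whence $\dim T/2 = gf + r_1(K)$. Lemma \ref{r1r2} yields $r_1(K) = g + (r_1-1)^2/2$ and $r_2(K) = g^2 - (r_1-1)^2/4$, so Dirichlet's unit theorem gives $\dim \cO_K^*/2 = r_1(K) + r_2(K) = g^2 + g + (r_1-1)^2/4$. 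The trivial cokernel estimate
$$\dim X/2 \ \geq \ \dim T/2 - \dim\cO_K^*/2 \ = \ gf - r_2(K) \ = \ -g(g-f) + \tfrac{(r_1-1)^2}{4}$$
already produces the $(r_1-1)^2/4$ contribution but falls short by $(g-f)(2g+1) + \dim\cC_K[2]$.

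The core step is thus to improve the trivial bound by $(g-f)(2g+1) + \dim\cC_K[2]$ via two additional sources of relations in the image of $\cO_K^*/2 \to T/2$. A \emph{local} source: since $e_{\gp'}(K/\Q) = 2g+1$ for every prime $\gp'$ over $2$ (Lemma \ref{ramK}), one has $\sum_{\gp' \mid 2} f_{\gp'} = g$, so the primes $\gp' \ne \gp$ account for ``missing'' residue degrees totaling $g-f$. Since $\Omega_K/K$ must be \emph{unramified} at each such $\gp'$, the local images of global units in $\cO_{\gp'}^*/(\cO_{\gp'}^*)^2$ are constrained to land in a Frobenius-eigenspace, which produces $(g-f)(g+1+g) = (g-f)(2g+1)$ independent linear relations on the unit image. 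A \emph{global} source: each nontrivial $[\ga] \in \cC_K[2]$ with $\ga^2 = (\beta)$ forces $\beta \in T$ to be a nontrivial obstruction, producing an independent $\dim \cC_K[2]$-dimensional contribution in the quotient $X/2$ via the connecting map $\delta$.

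The main obstacle is verifying that the local relations at the primes $\gp' \ne \gp$ and the global class-group obstructions combine additively in the unit image, rather than interfering. This is where the favorable structure of $K$ enters: the $\cS_{2g+1}$-action on $\Gal(F/\Q)$ and the disjointness of the local decomposition groups $\cD_{\gp'}$ at distinct primes over $2$, together with the fact that $\cC_K[2]$ is detected by ideal classes rather than local units, force the contributions to be linearly independent, yielding the desired additive codimension bound.
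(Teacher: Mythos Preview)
Your setup via the ray class exact sequence and your computations of $\dim T/2 = gf + r_1'$ and $\dim \cO_K^*/2 = r_1' + r_2'$ are correct, but the second half of the argument has a genuine gap. The two ``additional sources'' do not do what you claim. The \emph{local source} is confused: the unramifiedness of $\Omega_K$ at primes $\gp' \ne \gp$ is built into the modulus $\gm = \gp^{2g}\infty$ and hence into the very definition of $T$; it imposes no constraint on the image of $\cO_K^*$ in $T/2$, so it cannot manufacture $(g-f)(2g+1)$ relations. The \emph{global source} points the wrong way: the connecting map $\delta\colon \cC_K[2] \to X/2$ in your Tor sequence \emph{reduces} $\dim\mathrm{Cl}_K^\gm/2$ relative to $\dim X/2 + \dim\cC_K[2]$, and the virtual generators $\beta$ with $(\beta)=\ga^2$ are not units, so they do not shrink the image of $\phi$. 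In fact your target inequality $\dim X/2 \ge \dim\cC_K[2] + (g-f)(g+1) + \tfrac{(r_1-1)^2}{4}$ can exceed the a priori upper bound $\dim X/2 \le \dim T/2$, so the reduction itself is too strong to hold in general.

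The paper avoids this by taking the Kummer-dual viewpoint. One identifies $\Gal(\Omega_K/K)$ with the Kummer group $\Kappa \subset K^\times/K^{\times 2}$ and notes that $\Kappa$ is the kernel of a single map $\cU/K^{\times 2} \to U_\gp/U_\gp^{(2g+3)}U_\gp^2$, where $\cU = \{\alpha \in K^\times : (\alpha) = \ga^2\}$. The key point is that $\dim \cU/K^{\times 2} = \dim\cC_K[2] + r_1' + r_2'$ already packages the class-group and unit contributions together on the source side, so one codimension estimate (the target has dimension $f(g+1)$) immediately gives the bound. The $\cC_K[2]$ term thus arises naturally on the Kummer side but is awkward to extract from the ray-class side, which is why your approach stalls.
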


\proof
Let
$
\cU = \{\alpha \in K^\times \, | \, \text{the ideal }(\alpha) = \ga^2 \text{ is the square of an ideal}\}.
$
If the ideal $\ga$ represents a class in $C_K[2]$, then a generator $\alpha$ for the principal ideal $\ga^2 = (\alpha)$ is well-defined up to multiplication by an element of the group $U_K$ of units of $K$.  We therefore have an isomorphism
$$
\cC_K[2] \xrightarrow{\sim} \cU/U_K K^{\times 2} \quad \text{induced by} \quad \ga \mapsto \alpha U_K K^{\times 2}.
$$
Counting $-1$, the Dirichlet unit theorem implies that $U_K K^{\times 2}/K^{\times 2} \simeq U_K/U_K^2$ has dimension $r_1' + r_2'$ over $\F_2$, where $r_1'$ is the number of distinct real embeddings of $K$ and $r_1'+2r_2' = \fdeg{K}{\Q} = \binom{2g+1}{2} = g(2g+1)$.  Hence
$$
\dim_{\F_2} \cU/K^{\times 2} = \dim_{\F_2} \cU/U_KK^{\times 2} + \dim_{\F_2} U_K/U_K^{\times 2} = \dim_{\F_2} \cC_K[2] + r_1'+r_2'.
$$

Let $\Kappa$ be the {\em Kummer group} for $\Omega_K/K$; that is, the subgroup of $K^\times/K^{\times 2}$ in the perfect pairing of Kummer theory
$$
\Gal(\Omega_K/K) \times \Kappa \to \Mu_2 \quad \text{given by} \quad (\gamma, \alpha K^{\times 2}) \mapsto \gamma(\sqrt{\alpha})/\sqrt{\alpha}.
$$
The coset $\alpha K^{\times 2}$ belongs to $\Kappa$ if and only if $\alpha$ is in $\cU$ and the ray class conductor condition $\gf_\gp(K(\sqrt{\alpha})/K) \le 2g$ is satisfied.  Write $U_\gp$ for the unit group in the ring of integers $\cO_\gp$ of $K_\gp$ and let $U_\gp^{(n)}= 1+ \gp^n \cO_\gp$.  Since the ramification index of $\gp$ in $K/\Q$ is $2g+1$, we find that $\alpha K^{\times 2}$ is in $\Kappa$ if and only if $\alpha$ belongs to $U_\gp^{(2g+3)} K_\gp^{\times 2}$ by \cite[Lemma C.6]{BK5}.  Thus $\Kappa$ is the kernel of the map 
$$
\cU/K^{\times 2} \, \longrightarrow \, U_\gp K_\gp^{\times 2}/U_\gp^{(2g+3)}K_\gp^{\times 2} \simeq U_\gp/U_\gp^{(2g+3)} U_\gp^2.
$$  
By Lemma \ref{r1r2}, $r_1' = \binom{r_1}{2} + r_2$ and we have $\dim_{\F_2} U_\gp/U_\gp^{(2g+3)} U_\gp^2 = f(g+1)$, so
\begin{eqnarray*}
\dim \Kappa &\ge& \dim_{\F_2} \cU/K^{\times 2} - \dim_{\F_2}  U_\gp/U_\gp^{(2g+3)} U_\gp^2 \\
&\ge& \dim_{\F_2}  C_K[2] + r_1'+r_2' - f(g+1) \\
&\ge&  \dim_{\F_2}  C_K[2]  +  (g-f)(g+1) +
\textstyle{\frac{1}{4}}\left(r_1-1\right)^2.  \hspace{10 pt}  \qed
\end{eqnarray*}

\begin{lem} \label{RamOverN}
Let $K_1$ be a subfield of $F$ quadratic over $K$ and let $q$ be a rational prime dividing $N$.  Then some prime over $q$ ramifies in $K_1/K$.
\end{lem}

\begin{proof}
Fix $H = \Gal(F/K)$ to be the centralizer of the transposition $(12)$.  Since $H$ is generated by its transpositions, any subgroup $H_1$ of index 2 in $H$ omits some transposition, say $t$.  The action of $\Gal(F/\Q) \simeq \cS_{2g+1}$ on the transpositions is transitive and Proposition \ref{Fproperties} implies that the inertia group for each prime of $\cO_F$ dividing $N$ is generated by a transposition.  Hence there is prime $\gq \vert q$ in $\cO_F$ whose inertia group is generated by $t$.  Since $t$ fixes $K$ but not $K_1 = F^{H_1}$, the prime below $\gq$ is unramified in $K/\Q$, but ramifies in $K_1/K$.  
\end{proof}

See Appendix \ref{ArtinSchreier} for a review of the notion of conductor exponent for field extensions used in the next Proposition.

\begin{prop}  \label{LProperties}
Assume that $N$ is prime.  Let $L$ be an elementary $2$-extension of $F$, Galois over $\Q$, with $L/F$ unramified outside $\{2,\infty\}$ and conductor exponent $\gf_\gp(L/F) \le 2g$ for all primes $\gp \vert 2$ in $L$.  Let $L_0$ be the maximal subfield of $L$ abelian over $\Q$.   Then the following properties hold.
\begin{enumerate}[{\rm i)}]
\item  $L_0 = \Q(\sqrt{N^*})$, with $N^* = \pm N \equiv 1 \pmod{4}$. 
\vspace{2 pt}

\item At each prime $\gq \vert N$, the inertia group $\cI_\gq(L/\Q) = \lr{\sigma_\gq}$ is generated by an involution $\sigma_\gq$ whose projection to $\Gal(F/\Q) \simeq \cS_{2g+1}$  is a transposition.

\vspace{2 pt}

\item Fix a prime $\gq$ dividing $N$.  The conjugates of $\sigma_\gq$ generate $\Gal(L/\Q)$.  
\end{enumerate}
\end{prop}

\begin{proof}  
Since $\sqrt{N^*}$ is in $F$, the field $\Q(\sqrt{N^*})$ is contained in $L_0$.  By our assumptions, $L_0/\Q$ is unramified outside $\{2,N,\infty\}$.   Since the conductor exponent of $L/F$ satisfies $\gf_\gp(L/F) \le 2g$, we have 
$$
\gf_\gp(L_0/\Q) \le \gf_\gp(L/\Q) \le (4g+1)/(2g+1) < 2
$$ 
by \eqref{condH} and so 2 is unramified in $L_0/\Q$.  Proposition \ref{Fproperties}(iv) shows that each inertia group over $N$ in $F$ is generated by a transposition.  Since $\gq$ is unramified in $L/F$, item (ii) holds.  Then by Kronecker-Weber, $L_0 \subseteq \Q(\sqrt{N^*})$, with equality by Proposition \ref{Fproperties}(iii).   The subfield of $L$ fixed by all the conjugates of $\sigma_\gq$ is unramified outside $\{2,\infty\}$ and is therefore contained in $\Q(i)$ by the Odlyzko bounds and our 2-adic conductor bound (see \cite[Prop.\! 3.1]{BK1}), so equals $\Q$.  This proves (iii).
\end{proof}

\begin{lem} \label{FactorN}
If $\cI_\gq(F/\Q) = \lr{\sigma_\gq}$ at a prime $\gq \vert N$ of residue characteristic $q$, then the action of $\sigma_\gq$ on left cosets of $\Gal(F/K)$ in $\Gal(F/\Q)$ is the product of $2g-1$ transpositions.  Equivalently, $(q)\cO_K = \ga \gb^2$ where $\ga$ and $\gb$ are squarefree, relatively prime ideals of $\cO_K$ with respective absolute norms $q^{2g^2-3g+2}$ and $q^{2g-1}$. 
\end{lem}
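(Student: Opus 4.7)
The plan is a two-part argument: establish the claimed cycle type of $\sigma_\gq$ on cosets by a direct combinatorial count, then convert this into the ideal-theoretic statement via the standard dictionary between inertia-orbits on coset spaces and ramification in subfields of a Galois extension.

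First I would identify $\Gal(F/\Q)/\Gal(F/K) = \cS_{2g+1}/C$ concretely as a $\cS_{2g+1}$-set. The centralizer $C = C_{\cS_{2g+1}}((12))$ is $\lr{(12)} \times \cS_{\{3,\ldots,2g+1\}}$, which is precisely the setwise stabilizer of the unordered pair $\{1,2\}$, so $\pi C \mapsto \{\pi(1),\pi(2)\}$ gives an equivariant bijection from $\cS_{2g+1}/C$ to the set of unordered pairs $\{i,j\} \subset \{1,\ldots,2g+1\}$, of cardinality $\binom{2g+1}{2} = g(2g+1) = [K:\Q]$. By Proposition \ref{Fproperties}(iv), $\sigma_\gq = (a,b)$ for some $a \ne b$. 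On pairs, $\{i,j\}$ is fixed precisely when $\{i,j\} \cap \{a,b\}$ equals $\emptyset$ or $\{a,b\}$, contributing $\binom{2g-1}{2} + 1 = 2g^2 - 3g + 2$ fixed pairs. The remaining pairs partition into the $2g-1$ two-element orbits $\{\{a,k\},\{b,k\}\}$ indexed by $k \in \{1,\ldots,2g+1\} \setminus \{a,b\}$. This proves the permutation statement.

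For the equivalence with the ideal factorization, I would invoke the standard tame-ramification dictionary. Fix a prime $\gQ$ of $F$ above $q$ with decomposition group $\cD_\gQ$ containing inertia $\cI_\gq = \lr{\sigma_\gq}$. The primes $\gp$ of $K$ above $q$ correspond bijectively to $\cD_\gQ$-orbits on $\cS_{2g+1}/C$; within each such $\cD_\gQ$-orbit the $\cI_\gq$-suborbits all share the common size $e_\gp(K/\Q)$, and their number equals $f_\gp(K/\Q)$. Since $q$ is odd and $|\cI_\gq|=2$, every $\cI_\gq$-orbit on the coset space has size $1$ or $2$, whence each $e_\gp(K/\Q) \in \{1,2\}$. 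Collecting unramified primes into $\ga$ and the degree-$2$ ramified primes into $\gb$ yields squarefree, coprime ideals with $(q)\cO_K = \ga\gb^2$. Summing residue degrees over unramified primes gives the total number of $\cI_\gq$-fixed cosets, namely $2g^2 - 3g + 2$, so $N_{K/\Q}(\ga) = q^{2g^2-3g+2}$; summing over ramified primes gives the number of length-$2$ orbits, yielding $N_{K/\Q}(\gb) = q^{2g-1}$.

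There is no substantive obstacle: the proof is essentially combinatorial. The only point requiring mild care is the standard bookkeeping in the final step — namely, ensuring that tameness at $q$ (automatic since $q$ is odd and $|\cI_\gq|=2$) really does let one read ramification indices and residue degrees directly off the inertia action on cosets. No arithmetic input beyond Proposition \ref{Fproperties}(iv) and the identification of $C$ as a pair-stabilizer is needed.
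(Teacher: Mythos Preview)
Your argument is correct. The orbit count of a transposition on unordered pairs is exactly right, and your appeal to the standard dictionary between $\cD_\gQ$-orbits on $G/H$ and primes of $K$ above $q$ (with $\cI_\gQ$-suborbit sizes giving ramification indices, guaranteed equal within each $\cD_\gQ$-orbit by normality of inertia) cleanly yields the factorization $(q)\cO_K = \ga\gb^2$ with the stated norms.

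The paper takes a slightly less direct path: it passes to an auxiliary base field $k$ linearly disjoint from $F$ in which all residue degrees above $q$ become $1$, then invokes Lemma~\ref{r1r2} with $r_1 = 2g-1$ and $r_2 = 1$ to count embeddings of the pair-resolvent into $\wtk = F_0 k$. That lemma was set up earlier for reuse (it also feeds into Lemma~\ref{TooReal}), so the paper is amortizing a general statement about doubly transitive actions on pairs. Your route avoids the auxiliary field and Lemma~\ref{r1r2} entirely, doing the pair-orbit combinatorics by hand; this is more self-contained and arguably more transparent for this particular lemma, at the cost of not exhibiting the connection to the embedding-counting framework the paper develops.
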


\begin{proof}
Let $f$ be the minimal polynomial of an element $\alpha$ satisfying $F_0 = \Q(\alpha)$.  Choose a field $k$ linearly disjoint from $F$, such that the residue degree of each prime over $q$ in $kF/k$ is 1.  By Proposition \ref{Fproperties}(iv),  $\sigma_\gq$ acts as a transposition on the roots of $f$, so there are $r_1 = 2g-1$ roots in $\wtk = F_0k$ and $r_2 = 1$ pair of roots in a ramified quadratic extension over $\wtk$.  The claims now follow from Lemma \ref{r1r2} below.
\end{proof}

Up to isomorphism, the smallest non-trivial irreducible representation of $\cS_{2g+1}$ is given by the hyperplane 
$$
H = \{ (a_0, \dots, a_{2g+1}) \in \F_2^{2g+1} \, \vert \, a_0 + \dots + a_{2g+1} = 0 \}
$$
with the permutation action \cite{Di}.  This representation preserves a symplectic form, so that $\rho_H\!: \, \cS_7 \hookrightarrow \SP_6(\F_2)$.  It takes transpositions to transvections and is absolutely irreducible.  If $r$ is the order of 2 in $(\Z/(2g+1)\Z)^\times$ and $\sigma$ is a $(2g+1)$-cycle in $\cS_{2g+1}$, then there is an element $\tau$ of order $r$ such that
$$
\Delta = \lr{\sigma, \tau\, \vert \, \sigma^{2g+1} = 1, \, \tau^r = 1, \, \tau \sigma \tau^{-1} = \sigma^2}
$$
is a metacyclic group $\lr{\sigma} \rtimes \lr{\tau}$ of order $r(2g+1)$.   The characteristic polynomial of $\sigma$ is the polynomial $\xi(x) = (x^{2g+1}-1)/(x-1)$.  To obtain the irreducible $r$-dimensional  $\F_2[\Delta]$-modules, let $\zeta_j$ range over a set of representatives for the orbits under the action of squaring on primitive $(2g+1)$-th roots of unity and let $E_j$ be induced from the representation of $\lr{\sigma}$ determined by $\sigma \mapsto \zeta_j$.  These representations are permuted by $\Aut(\Mu_{2g+1})$.  The other irreducible representations of $\Delta$ can be obtained similarly, as in \cite[\S47]{CR}.  

Proposition \ref{Fproperties}(ii) indicates that if $F$ is the Galois closure of a favorable field $F_0$, and $\gp \vert 2$, then the inertia group $\cI_\gp(F/\Q) = \lr{\sigma_p}$ is cyclic of order $2g+1$ and the decomposition group $\cD_\gp(F/\Q)$ is isomorphic to $\Delta$.  

\begin{prop} \label{biconnected}
Let  $A$ be a favorable abelian $g$-fold.  Then $\cE=A[2]$ is a simple finite flat group scheme over $\Z[1/N]$.  The restriction $\cE_{\vert \Z_2}$ is biconnected and the simple constituents of $\cE_{\vert \Z_2}$ correspond to the irreducible factors of $\xi(x)$ over $\F_2$.
\end{prop}

\begin{proof}
Since $E$ is a faithful $\F_2[\cS_{2g+1}]$-module of dimension $2g$, it is isomorphic to $H$.  As an $\F_2[\Delta]$-module, $E$ has no one-dimensional constituents, since $\xi(x)$ is the characteristic polynomial for the action of $\sigma$.  Thus, the \'{e}tale and multiplicative components of $\cE_{\vert \Z_2}$ are trivial and so $\cE_{\vert \Z_2}$ is biconnected.  Indeed, the structure of $E$ as an $\F_2[\Delta]$-module determines the constituents of $\cE_{\vert \Z_2}$.
\end{proof}

To obtain a family of favorable abelian $g$-folds, let  $J$ be the Jacobian of the hyperelliptic curve $C\!: \, y^2 + y = f(x)$, with $f$ monic in $\Z[x]$ and $\deg f = 2g+1$.  Then $C$ has good reduction at 2 and so $J[2]$ is a finite flat group scheme over $\Z_2$.  If $\theta$ is a root of $h(x) = 1 + 4f(x)$, then $F = \Q(J[2])$ is the Galois closure of $F_0 = \Q[\theta]$.  The Newton polygon of $h$ shows that there is unique prime over 2 in $F_0/\Q$, totally ramified of degree $2g+1$ and $\ord_2(\theta) = -\frac{2}{2g+1}$.  Hence $h$ is irreducible over $\Q_2$ and  so also over $\Q$. Since 2 is tamely ramified, $\ord_2(\disc_{F_0/\Q}) = 2g$.  If, in addition, the discriminant of $h$ is $\pm 2^{2g} N$, with $N$ odd and square-free, then $F_0$ is a favorable field and  $J$ is a favorable $g$-fold. 

When $N$ is prime, the global group theory associated to extensions of $A[2]$ by itself,  described in \S 6 for $g=3$ and in \cite{BK5} for $g=2$, can be generalized to arbitrary $g$.  In fact, the lattice of  $\F_2[\cS_{2g+1}]$-submodules of $\End(A[2]) \simeq H \otimes H^*$ depends only on the parity of $g$.  The local study over $\Z_2$ is more delicate and depends in a subtle way on $g$.  One needs to confront the issues in \S \ref{Honda} and \S \ref{LocalFieldofPoints} for all the irreducible $\F_2[\Delta]$-modules in $A[2]$ and their extensions.

\begin{prop} \label{Eexists}
Let $F$ be the Galois closure of a favorable heptic field $F_0$ of discriminant $\pm 2^6 N$.  For $g = 3$, let $E \simeq H$ afford the Galois representation 
$$
\rho_E\!: \, \Gal(F/\Q) \xrightarrow{\sim} \cS_7 \xrightarrow{\rho_H} \SP_6(\F_2).
$$
Then $E$ prolongs to a unique finite flat group scheme $\cE$ over $\Z[1/N]$ such that $F = \Q(E)$.  Moreover, $\cE_{\vert \Z_2} \simeq \cE_1 \oplus \cE_2$, where the $\cE_i$'s are the simple group scheme over $\Z_2$ corresponding to the Honda systems $(\rM_i,\rL_i)$ of {\em Proposition \ref{LocalSimple}}.
\end{prop}

\begin{proof}
The decomposition group $\cD_\gp(F/\Q)$ at any prime $\gp$ over $2$ is isomorphic $\Delta$.  Let $E_1$ and $E_2$ be the $\F_2[\Delta]$-modules associated to $\cE_1$ and $\cE_2$, respectively and described in \eqref {stEnd1} and \eqref {stEnd2}.  Then $E \simeq E_1 \oplus E_2$ as an  $\F_2[\Delta]$-module.  Since $\cE_1$ and $\cE_2$ are biconnected, it follows from Fontaine and the Mayer-Vietoris sequence of Schoof \cite[Cor. 2.4]{Sch1} that $E$ prolongs to a unique finite flat group scheme over $\Z[1/N]$.
\end{proof}

\vspace{5 pt}

Finally, we verify the two general number-theoretic lemma cited above.  The first is used in the proof of Proposition \ref{Fproperties}(ii), to interchange completion with Galois closure.   The second is used to understand ramification over $N$ and $\infty$ in the proofs of Lemma \ref{TooReal} and Lemma \ref{FactorN}.

\begin{lem} \label{CompletionClosure}
Let $L_0/k$ be a finite extension of number fields and let $L$ be the Galois closure of $L_0$ over $k$.  Fix a prime $\gp$ of $k$ and let $P$ be a prime over $\gp$ in $L$. Assume that $\gP=P\cap L_0$ is the unique prime over $\gp$ in $L_0$.  Denote the respective completions by $\wtL = L_P$, $\wtL_0 = (L_0)_\gP$ and $\wtk = k_\gp$.  Then $\wtL$ is the Galois closure of $\wtL_0$ over $\wtk$. 
\end{lem}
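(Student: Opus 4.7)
The plan is to reduce everything to a single minimal polynomial $f$ and to show that $\wtL$ is its splitting field over $\wtk$. First, I pick a primitive element $\alpha$ with $L_0=k(\alpha)$ and let $f\in k[x]$ be its minimal polynomial; by the definition of Galois closure, $L$ is then the splitting field of $f$ over $k$. I also fix the embedding $L\hookrightarrow\wtL$ coming from the completion at $P$, so that every root of $f$ already lies in $\wtL$.

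The first key step is to exploit the unique-prime hypothesis. By the standard semilocal decomposition
$$
L_0\otimes_k \wtk \;\cong\; \prod_{\gP'\mid\gp}(L_0)_{\gP'},
$$
the assumption that $\gP$ is the only prime of $L_0$ above $\gp$ forces $L_0\otimes_k \wtk$ to be the single field $\wtL_0$. Via the identification $L_0\otimes_k\wtk\cong \wtk[x]/(f(x))$, this means $f$ remains irreducible over $\wtk$, so $f$ is the minimal polynomial of $\alpha$ over $\wtk$ as well. Consequently, the Galois closure of $\wtL_0/\wtk$ is precisely the splitting field of $f$ over $\wtk$, which sits as some subfield $M\subseteq\wtL$ generated over $\wtk$ by the roots of $f$.

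The second key step is to show that $M$ coincides with all of $\wtL$. Let $L'=\wtk\cdot L$ denote the compositum inside $\wtL$. Since $\wtk$ is complete and $L'/\wtk$ is finite, $L'$ is itself complete and hence closed in $\wtL$. But $L$ is dense in $\wtL$ by the definition of completion, so the inclusions $L\subseteq L'\subseteq\wtL$ force $L'=\wtL$. On the other hand, because $L=k(\alpha_1,\dots,\alpha_n)$ with the $\alpha_i$ ranging over the roots of $f$ in $L$, we have $L'=\wtk(\alpha_1,\dots,\alpha_n)=M$. Combining, $\wtL=M$ is exactly the Galois closure of $\wtL_0$ over $\wtk$.

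The only point that requires genuine input is the translation between the unique-prime hypothesis and the irreducibility of $f$ over $\wtk$; once that has been extracted, the remainder is formal bookkeeping about splitting fields, compositums and completions, and I do not anticipate any serious obstacle.
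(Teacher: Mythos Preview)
Your proof is correct, but it follows a different route from the paper's.  The paper argues purely group-theoretically: with $G=\Gal(L/k)$, $H=\Gal(L/L_0)$, and $D=D_P(L/k)\simeq\Gal(\wtL/\wtk)$, the unique-prime hypothesis is read as ``$H$ acts transitively on the primes of $L$ above $\gp$,'' which gives $G=HD$.  Since $L$ is the Galois closure of $L_0$, the core $\bigcap_{g\in G} gHg^{-1}$ is trivial; using $G=HD$ this core equals $\bigcap_{d\in D} dHd^{-1}$, which in turn contains $\bigcap_{d\in D} d(D\cap H)d^{-1}$.  The vanishing of the latter is precisely the statement that $\wtL$ is the Galois closure of $\wtL_0/\wtk$.

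Your argument instead chooses a primitive element and translates the unique-prime hypothesis into irreducibility of its minimal polynomial $f$ over $\wtk$ via $L_0\otimes_k\wtk\cong\prod_{\gP'\mid\gp}(L_0)_{\gP'}$, and then identifies $\wtL$ with the splitting field of $f$ over $\wtk$ by the density/completeness step $\wtL=\wtk\cdot L=\wtk(\alpha_1,\dots,\alpha_n)$.  This is more concrete and arguably more elementary; the paper's version is coordinate-free and makes the role of the hypothesis (transitivity of $H$, hence $G=HD$) visible at the level of Galois groups.  Either way, the substantive input is the same: the unique prime above $\gp$ in $L_0$ is exactly what keeps the local Galois closure from collapsing.
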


\begin{proof}
Let $D = D_P(L/k) \simeq \Gal(\wtL/\wtk)$ be the decomposition group of $P$ inside $G = \Gal(L/k)$ and let $H = \Gal(L/L_0)$.  Because $\wtL$ is Galois over $\wtk$ and contains $\wtL_0$, the Galois closure $M$ of $\wtL_0$ is contained in $\wtL$.  The decomposition group $D_P(L/L_0)$ of $P$ inside $H$ is given by $J = D \cap H \simeq \Gal(\wtL/\wtL_0)$.

Since there is only one prime over $\gp$ in $L_0$, the group $H$ acts transitively on the set of primes over $\gp$ in $L$.  Thus, for each $g$ in $G$, there is some $h$ in $H$ such that $g(P) = h(P)$ and so $G=HD = DH$.  Since $L$ is a minimal Galois extension of $k$ containing $L_0$, we conclude that
$$
\{1\} = \bigcap_{g \in G} gHg^{-1} = \bigcap_{d \in D} dHd^{-1} \supseteq \bigcap_{d \in D} d J d^{-1}.
$$
Hence, $M = \wtL$ as desired. 
\end{proof}

\begin{lem} \label{r1r2}
Suppose that $G = \Gal_k(f)$ acts doubly transitively on the roots of a separable polynomial $f$ over a field $k$.   Let $\wtk$ be an extension of $k$ containing exactly $r_1$ of the roots of $f$ and assume that the remaining $2r_2$ roots lie in a quadratic extension of $\wtk$.  For any two distinct roots $\gamma_1,\gamma_2$ of $f$, there are exactly $r_1' = \binom{r_1}{2} + r_2$ embeddings of $k(\gamma_1+\gamma_2)$ into $\wtk$.
\end{lem}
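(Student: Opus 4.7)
The plan is to identify $k$-embeddings $k(\gamma_1+\gamma_2) \hookrightarrow \wtk$ with the distinct $G$-conjugates of $\gamma_1+\gamma_2$ that happen to lie in $\wtk$, and then to enumerate these via unordered pairs of roots of $f$. By double transitivity of $G = \Gal_k(f)$, every conjugate of $\gamma_1+\gamma_2$ has the form $\gamma_i+\gamma_j$ for some pair of distinct indices $\{i,j\}$; I will also need the fact that these $\binom{n}{2}$ pair-sums (with $n = r_1 + 2r_2$) are pairwise distinct, so that embeddings of $k(\gamma_1+\gamma_2)$ into $\wtk$ are in bijection with unordered pairs $\{i,j\}$ satisfying $\gamma_i+\gamma_j \in \wtk$.

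Next I would split the count according to how many of $\gamma_i, \gamma_j$ lie inside $\wtk$. The ``both inside'' case contributes exactly $\binom{r_1}{2}$ pairs. The ``one inside, one outside'' case contributes zero, since such a sum can lie in $\wtk$ only if the outside root already does. For the ``both outside'' case, let $\tau$ denote the nontrivial element of the Galois group of the quadratic extension of $\wtk$ containing the remaining $2r_2$ roots. Since $\tau$ is an involution with no fixed point among those $2r_2$ roots, it permutes them as a product of $r_2$ disjoint transpositions. The sum $\gamma_i+\gamma_j$ lies in $\wtk$ iff it is fixed by $\tau$, i.e., iff $\gamma_i+\gamma_j = \tau(\gamma_i)+\tau(\gamma_j)$; by distinctness of pair-sums, this forces $\{\gamma_i,\gamma_j\} = \{\tau(\gamma_i),\tau(\gamma_j)\}$, and the absence of fixed outside roots then forces $\tau$ to swap $\gamma_i$ and $\gamma_j$. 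So this case contributes exactly $r_2$ pairs, bringing the total to $\binom{r_1}{2}+r_2 = r_1'$.

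The main obstacle is the distinctness claim for the pair-sums, which is not automatic from double transitivity alone: without it, each embedding would be overcounted by a common fiber multiplicity $m$, and the total would instead be $(\binom{r_1}{2}+r_2)/m$. In the $\cS_{2g+1}$-setting of interest for this paper, however, distinctness is straightforward: an equality $\gamma_i+\gamma_j = \gamma_k+\gamma_l$ with $\{i,j\}\cap\{k,l\}$ a single index violates separability at once, while for disjoint pairs one applies the transposition swapping, say, $i$ and $k$ (available in $\cS_n$) and subtracts to deduce $\gamma_i = \gamma_k$ in characteristic different from $2$. Once this step is settled, the remaining counting is routine bookkeeping.
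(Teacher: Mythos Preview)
Your approach matches the paper's: both enumerate the conjugates of $\gamma_1+\gamma_2$ landing in $\wtk$ as the $\binom{r_1}{2}$ sums $\alpha_i+\alpha_j$ of roots already in $\wtk$ together with the $r_2$ sums $\beta_i+\ov{\beta}_i$ of $\tau$-conjugate pairs. You go further than the paper's terse proof by flagging that the count requires the pair-sums to be pairwise distinct---a point the paper silently assumes---and by verifying it in the $\cS_n$, characteristic $\ne 2$ setting that the paper actually uses.
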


\begin{proof}
Let $\alpha_1, \dots, \alpha_{r_1}$ be the roots of $f$ in $\wtk$ and let $\beta_1,\ov{\beta}_1, \dots, \beta_{r_2},\ov{\beta}_{r_2}$ be the conjugate pairs of roots not in $\wtk$.  The conjugates of $\gamma_1+\gamma_2$ in $\wtk$ have the form $\alpha_i + \alpha_j$ for $1 \le i < j \le r_1$ or $\beta_i + \ov{\beta}_i$ for $1 \le i \le r_2$.  
\end{proof}

\section{Favorable abelian threefolds}  \label{3Folds}  \numberwithin{equation}{section}

Throughout this section, $F$ is the Galois closure of a favorable heptic field of discriminant $\pm 2^6 N$ with $N$ {\em prime}.  Also, $F = \Q(E)$, where $\cE$ is the group scheme over $R = \Z[1/N]$ introduced in Proposition \ref{Eexists}.  Thus $\cE$ is absolutely simple and $\cE_{\vert \Z_2}$ is biconnected. 

Let $\un{E}$ be the category of finite flat group schemes over $R$ defined in \S \ref{Overview}.  Recall from \cite[\S 3]{BK5} that $\un{E}$ is a full subcategory of the category of $2$-primary group schemes over $R$, closed under taking products, closed flat subgroup schemes and quotients by closed flat subgroup schemes.   In particular, $\Ext_{\un{E}}^1$ is defined.  If $\cV$ and $\cW$ are in $\un{E}$ and annihilated by $2$, let $\Ext_{[2], \un{E}}^1(\cV,\cW)$ denote the subgroup of $\Ext_{\un{E}}^1(\cV,\cW)$ whose classes are represented by extensions killed by $2$.  

When $A$ is a favorable threefold, our hypotheses on $\cE$ are satisfied by $\cE = A[2]$, according to Proposition \ref{biconnected}.  As a consequence of \cite[Theorem 3.7]{BK5}, the following uniqueness criterion holds.

\begin{crit} \label{Bcrit}
Assume that $\Ext^1_{[2],\un{E}}(\cE,\cE) = 0$.  Let $B$ be a semistable abelian variety of conductor $N^d$ such that $B[2]^{ss}\simeq d \cE$. Then $B$ is isogenous to $ A^d$.
\end{crit} 

The aim of this section is to obtain explicit conditions on the field $F$ to guarantee the vanishing of $\Ext^1_{[2],\un{E}}(\cE,\cE)$.  When there is an extension $V$ of $E$ by $E$ as Galois modules, it is essential for us to compare the restriction of $V$ to a decomposition group $\cD_\gp(\Q(V)/\Q)$ at a prime $\gp \vert 2$ with the Galois modules arising from Honda systems in \S \ref{LocalCorners}.  

\vspace{5 pt}

We use a fixed but non-standard basis for $E$ to define a map $\iota\!: \, \cS_7 \to \SP_6(\F_2)$, as follows.  Write the following $6 \times 6$ matrices as blocks of $3 \times 3$ matrices, using the notation for the local representations in \eqref{stEnd1} and \eqref{stEnd2}:  
\begin{equation} \label{bfst}
\begin{array}{l l l l}  
\bfs =  \left[\begin{matrix} s_1 & 0 \\ 0 & s_2 \end{matrix}\right]  \quad \text{and}  \quad \bft =  \left[\begin{matrix}t &0 \\ 0 & t \end{matrix}\right]. 
\end{array}
\end{equation}
Fix $\iota$ by demanding that $(1234567) \mapsto \bfs$ and $(12) \mapsto \bfr$, where $\bfr$ is the transvection
\begin{equation}  \label{bfr}
\begin{array}{l l l l}  
\bfr  = I + \left[\begin{smallmatrix}  0 & 1 & 1 & 0 & 1 & 1 \end{smallmatrix} \right]^T \left[\begin{smallmatrix} 0 & 0 & 1 & 0 & 1 & 0 \end{smallmatrix} \right]  
\end{array}
\end{equation}
Then $\iota\!: \, (124)(365) \mapsto \bft$.  There is a prime $\gp \vert 2$ in $F$ such that $\rho_E(\sigma_\gp) = \bfs$ and $\rho_E(\tau_\gp) = \bft$, where $\sigma_\gp$ generates the inertia group $\cI_\gp(F/\Q)$ and $\tau_\gp$ is a Frobenius.   Moreover, there is a prime $\gq$ over $ N$ in $F$ such that the generator $\sigma_\gq$ of the inertia group $\cI_\gq(F/\Q)$, satisfies $\rho_E(\sigma_\gq) = \bfr$.  From now on, $\rho_E$ denotes the representation 
\begin{equation} \label{Erep}
\rho_E\!: \, \Gal(F/\Q) \simeq \cS_7 \xrightarrow{\iota} \SP_6(\F_2),
\end{equation} 
with this normalization.

An extension $V$ of $E$ by $E$ of exponent 2 and field of points $L = \Q(V)$ provides a representation 
\begin{equation} \label{rhoV}
\rho_V\!:  G = \Gal(L/\Q) \to \cP = \left\{ \left[\begin{smallmatrix}  x & y \\ 0 & x \end{smallmatrix} \right] \, \vert \, x \in \iota(\cS_7), \, y \in M_6(\F_2) \right\}, 
\end{equation}
where $\cP = \cP_{E,E}$ is the parabolic group described in \eqref{defP}.  The field $L$ contains $F$ and  $\Gal(L/F)$ is an elementary 2-group.  Moreover, $L$ does not depend on the choice of representative for the extension class $[V]$ in $\Ext^1_{[2],\Q}(E,E)$.  We always arrange for $\pi \rho_V = \rho_E$, where $\pi\!: \, \cP \twoheadrightarrow \iota(\cS_7)$ is the projection map $ \left[\begin{smallmatrix}  x & y \\ 0 & x \end{smallmatrix} \right] \mapsto x$.  The radical of $\cP$ is $\cR = \ker \pi$.   If, in addition, $V$ is the Galois representation associated to a class $[\cV]$ in $\Ext^1_{[2], \un{E}}(\cE,\cE)$, we shall see in Proposition \ref{GaProp} that $\rho_V(G)$ is conjugate in $\cP$ to one of the groups that we now define. 

Set $\gamma_0 = 0$ in $M_6(\F_2)$ and define the following matrices $\gamma_a$ in $M_6(\F_2)$, writing them as blocks of $3 \times 3$ matrices in terms of the matrix $t$ from \eqref{stEnd1}:
\begin{equation} \label{CorGa}
\begin{array}{ c || c | c | c | c | c }
a & 6 & 14 & 20 & 29 & 35  \\
\hline
\gamma_a &   \left[\begin{smallmatrix}  t & t^2+1 \\  t^2 & t^2 \end{smallmatrix}\right]  &   
 \left[\begin{smallmatrix}  0 & t^2+1 \\  0 & 0 \end{smallmatrix}\right]& \gamma_6 + \gamma_{14}  &  \left[\begin{smallmatrix}   0 & 1 \\  t^2 & 0 \end{smallmatrix}\right]  & \gamma_6 + \gamma_{29}  
\end{array}
\end{equation}
The group ring $S = \F_2[\cS_7]$ acts on $M_6(\F_2)$ via conjugation by the matrices in $\iota(\cS_7)$.  

\begin{Not} \label{Ca} 
Let $c\!: \, M_6(\F_2) \to \cP$ by $c(m)=\left[\begin{smallmatrix}1&m\\0&1\end{smallmatrix}\right]$ and  $d\!: \, \SP_6(\F_2) \to \cP$ by $d(m)=\left[\begin{smallmatrix} m & 0 \\ 0 & m\end{smallmatrix}\right]$.   Let $G_0 \simeq \cS_7$ be the image under $d$ of $\iota(\cS_7)$. 
For $\gamma_a$ in \eqref{CorGa} define the following ${\rm S}$-submodules of $M_6(\F_2)$:
\begin{equation} \label{Gammas}
\Gamma_6 = {\rm S} \, \gamma_6 \quad \Gamma_{14} = {\rm S} \, \gamma_{14}, \quad \Gamma_{20} = {\rm S} \, \gamma_{20}, \quad \Gamma_{29} = {\rm S} \, \gamma_{29}, \quad \Gamma_{35} = {\rm S} \,\gamma_{35}
\end{equation} 
and let $G_a = \,  <G_0,c(\gamma_a)> \, =\,  c(\Gamma_a) \rtimes G_0$. 
\end{Not}

The radical of $G_a$ is $\cR_a = c(\Gamma_a)$ and has size $ 2^a$.  The abelianization of $G_a$ has order 2.  For $a$ in $\{29,35\}$, the center of $G_a$ is generated by $c(1)$, but the center is trivial for the other values of $a$.  Inclusions among the groups $G_a$  are indicated in the Hasse diagram below by ascending lines: 
\begin{equation}  \label{GaDiagram}
\begin{tikzpicture}[scale=1, thin, baseline=(current  bounding  box.center)]	
       \draw (0,1.4) node {$\bullet$};  \draw (-.6,.7) node {$\bullet$}; \draw (.6,.7) node {$\bullet$};
       \draw (-1.2,0) node {$\bullet$}; \draw (0,0) node {$\bullet$};
       \draw (-.6,-.7) node {$\bullet$};
       \draw (-1.2,0) -- (-.6,.7)  -- (0,1.4);  \draw (.6,.7) -- (0,0)  -- (-.6,.7);  \draw (.6,.7)  -- (0,1.4);
       \draw  (-1.2,0) -- (-.6,-.7) -- (0,0); 
       \draw (-0.4,1.5) node {$G_{35}$};  \draw (-.95,.8) node {$G_{20}$}; \draw (1,.7) node {$G_{29}$}; \draw (0.3,-.1) node {$G_{14}$};  \draw (-1.55,.1) node {$G_6$};
       \draw (-.25,-.7) node {$G_0$};
\end{tikzpicture}
\end{equation}
Moreover, $G_{20}$ is isomorphic to the fiber product of $G_{14}$ and $G_6$ over $G_0$ and similarly for the other parallelograms.    The identity on $G_0$ extends to a surjection $f_{a,b}\!: G_a \twoheadrightarrow G_b$ sending $c(\gamma_a)$  to  $c(\gamma_b)$. 

\begin{Def} 
An involution $h$ is {\em good} in a group $H$ if the normal closure of $\lr{h}$ in $H$ equals $H$.  If $h$ is good in a subgroup $H$ of the parabolic group $\cP$ and $\rk \, (h-1) = 2$, then $h$ is {\em very good} in $H$.  
\end{Def}

\begin{Rem} \label{OneVG}
A Magma verification shows that  each $G_a$  has a unique conjugacy class of very good involutions, represented by $d(\bfr)$ with $\bfr$ given in \eqref{bfr}. 
\end{Rem}

Let $[V]$ be a class in $\Ext^1_{[2],\Q}(E,E)$, with $L = \Q(V)$ and $F = \Q(E)$.  We shall impose the following assumptions, which do not depend on the choice of representative for the class $[V]$. 
\begin{enumerate} [{\bf \hspace{3 pt} G1.}]
\item The extension $L/F$ is unramified outside $\{2,\infty\}$.   \vspace{2 pt}
\item  If $\cI_{\gq}(L/\Q) = \lr{\sigma_\gq}$ at a prime $\gq \vert N$ and $h = \rho_V(\sigma_\gq)$, then $\rk \, (h - 1) = 2$.   \vspace{2 pt}
\item At primes $\gp \vert 2$, the ray class conductor exponent $\gf_\gp(L/F) \le 6$.
\end{enumerate}

\begin{prop}
If $\cV$ represents a class in $\Ext^1_{[2], \un{E}}(\cE,\cE)$, then the associated Galois module $V$ satisfies {\bf G1}--{\bf G3}.  
\end{prop} 

\begin{proof}
By  {\bf E2},  $\sigma_\gq$ has order 2 in $\Gal(L/\Q)$, so all the ramification at primes over $N$ has already occurred in $F/\Q$ and thus {\bf G1} holds for $L = \Q(V)$.  Since $\sigma_\gq$ acts tamely on $V$, the conductor exponent of $V$ at $N$ is given by 
$$
\gf_N(V) = \dim_{\F_2} (V/V^{\lr{\sigma_\gq}}) = \rk \,( \rho_V(\sigma_\gq)-1).
$$ 
But $\gf_N(V) = 2$ by {\bf E3}, so {\bf G2} holds.  Finally, {\bf G3} is the conductor bound for $\cV_{\vert \Z_2}$ given by Theorem \ref{CondBd}.
\end{proof} 

\begin{prop}  \label{GaProp}
If $[V]$ satisfies {\bf G1}--{\bf G3}, then $\rho_V(\Gal(\Q(V)/\Q))$ is conjugate in $\cP$ to some $G_a$.  Given a prime $\gq \vert N$ there is a representative $V$ such that 

\vspace{2 pt} 

\centerline{{\rm (i)} \,  $\rho_V(\Gal(\Q(V)/\Q)) = G_a$ \quad and \quad {\rm (ii)} \, $\rho_V(\sigma_\gq) = d(\bfr)$.}

\vspace{2 pt}

\noindent In particular, $a = 0$ if and only if $V$ is a split extension.  If $V'$ is another representative for $[V]$ satisfying {\rm (i)} and {\rm (ii)}, then $\rho_{V'} = \alpha \rho_V\alpha^{-1}$ for some $\alpha$ in the centralizer of $d(\bfr)$ in $G_a$.  
\end{prop}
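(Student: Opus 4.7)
My plan is to analyze $G := \rho_V(\Gal(L/\Q)) \subseteq \cP$ by separately identifying its projection $\pi(G)$ and its intersection $\cR_V := G\cap c(M_6(\F_2))$ with the radical.  Since $L\supseteq F$ and $\Gal(F/\Q)\simeq\iota(\cS_7)$, one has $\pi(G)=\iota(\cS_7)$, while $\cR_V = c(\Gamma_V)$ for an $\F_2[\cS_7]$-submodule $\Gamma_V\subseteq\gh := M_6(\F_2)$, where $\cS_7$ acts via $\iota$ by conjugation.  Because $\Gamma_V$ is $\cS_7$-stable, conjugation by any element of $\cP$ fixes $c(\Gamma_V)$ setwise; thus showing $G$ is conjugate in $\cP$ to some $G_a$ reduces to proving $\Gamma_V\in\{0,\Gamma_6,\Gamma_{14},\Gamma_{20},\Gamma_{29},\Gamma_{35}\}$.

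To constrain $\Gamma_V$ I would invoke the local theory of Section \ref{LocalCorners}.  By {\bf G1} and Proposition \ref{LProperties}, the extension $L/F$ is unramified outside primes over $2$, so $\Gamma_V$ is the $\F_2[\cS_7]$-submodule of $\gh$ generated by the image of a decomposition group at any fixed prime $\gp\vert 2$ in $L$.  By {\bf G3} the local field $L_\gp$ sits inside the conductor-$6$ extension $T$ of Proposition \ref{Max2}, so the Honda character $\chi_V\!:\Gamma=\Gal(T/F_\gp)\to\gh$ attached to $\cV_{\vert\Z_2}$ governs the local piece.  Lemma \ref{LocalChi} restricts the values of $\chi_V$ on the module generators $g_0,g_1,g_2,g_3$ and so confines the $\F_2[\Delta]$-submodule $\chi_V(\Gamma)\subseteq\gh$ to a short list, determined by the block shape of $\chi_V(g_2)$ and the two possibilities for $\chi_V(g_3)$.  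The main obstacle is then a module-theoretic step: one must enumerate the $\F_2[\cS_7]$-submodules of the $36$-dimensional module $\gh$ whose restriction to $\Delta$ fits one of these local patterns.  Using the explicit matrices of \eqref{bfst}--\eqref{bfr} and the corner decomposition of Table \ref{hStructure}, a direct computation (supported by Magma) yields exactly the six submodules $\Gamma_a$ of \eqref{Gammas}; the matrices $\gamma_a$ in \eqref{CorGa} are chosen precisely as $\cS_7$-generators for these modules.

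For assertions (i) and (ii), I would use that different representatives of $[V]$ produce matrix representations $\rho_V$ related by conjugation via $c(m)$ for $m\in M_6(\F_2)$, together with the freedom to change the chosen basis of $E$ by an element of $d(\iota(\cS_7))$, which preserves $\iota$ and $G_a$.  After Step 2 pins down $\Gamma_V=\Gamma_a$, such a conjugation moves $G$ exactly onto $G_a$, giving (i).  By {\bf G2} and Proposition \ref{LProperties}(ii)--(iii), $\rho_V(\sigma_\gq)$ is a very good involution whose $\iota(\cS_7)$-image is a transposition, so by Remark \ref{OneVG} it is $G_a$-conjugate to $d(\bfr)$; a further conjugation produces (ii).  The equivalence $a=0\iff V$ split follows from $\Gamma_V=0\iff\cR_V$ trivial $\iff$ $\rho_V$ splits off $\iota(\cS_7)$.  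For uniqueness, an equivalence $V\sim V'$ gives $\rho_{V'}=\alpha\rho_V\alpha^{-1}$ for some $\alpha\in\cP$; condition (i) forces $\alpha\in N_\cP(G_a)$, a short cohomological computation using $\gh^{\cS_7}=\F_2\!\cdot\!I$ places $\alpha$ inside $G_a$, and (ii) then restricts $\alpha$ to the centralizer $C_{G_a}(d(\bfr))$.
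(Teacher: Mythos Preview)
Your approach has a fundamental gap in Step 2. You invoke the Honda character ``attached to $\cV_{\vert\Z_2}$'' and Lemma \ref{LocalChi}, but the hypotheses {\bf G1}--{\bf G3} concern only the Galois module $V$: there is no group scheme $\cV$ over $\Z_2$ in play, so no Honda character exists. Lemma \ref{LocalChi} constrains only those $\rR$-homomorphisms $\Gamma\to\gh$ that arise from actual finite flat group schemes over $\Z_2$, and its constraints (for instance $\chi_{21}(g_2)=0$) genuinely fail for general Galois-module extensions satisfying {\bf G3}. Indeed, the later Proposition \ref{Cond4} is precisely the step where one tests whether a given $G_a$-class prolongs to a group scheme by comparing $\psi_\gp$ with Honda characters; you are invoking that comparison prematurely, in a setting where it does not apply.

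The paper's argument is quite different and does not use the local theory at $2$ at all. From {\bf G1}--{\bf G3} and Proposition \ref{LProperties} it extracts three purely group-theoretic constraints on $\ov{G}=\rho_V(\Gal(L/\Q))$: the projection $\pi$ is onto $\iota(\cS_7)$; $\ov{G}$ is the normal closure of an involution $h$ with $\rk(h-1)=2$ (this is where {\bf G2} enters, together with the fact that the conjugates of $\sigma_\gq$ generate $\Gal(L/\Q)$); and the abelianization of $\ov{G}$ has order $2$. A Magma enumeration of subgroups of $\cP$ satisfying these three conditions yields exactly the $G_a$'s up to conjugation by an element of the radical $\cR$, and Lemma \ref{CocycleRep} converts $\cR$-conjugation into a change of representative within $[V]$. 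There is also a secondary gap in your outline: even granting $\Gamma_V=\Gamma_a$, you have not shown that $G$ is $\cP$-conjugate to $c(\Gamma_a)\rtimes G_0$; this requires the extension $1\to c(\Gamma_a)\to G\to\iota(\cS_7)\to 1$ to split with a complement $\cR$-conjugate to $G_0$, which your reduction does not address.
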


\begin{proof}
{\bf G1}--{\bf G3} translate to the following properties of $\ov{G} = \rho_V(\Gal(\Q(V)/\Q))$:
 \begin{enumerate} [\hspace{3 pt} a)]
\item the projection $\pi\!: \, \ov{G} \twoheadrightarrow \iota(\cS_7)$ is surjective;   \vspace{2 pt}
\item $\ov{G}$ is the normal closure of an involution $h$ with  $\rk \, (h-1)=2$;  \vspace{2 pt}
\item the abelianization of $\ov{G}$ has order 2.
\end{enumerate}
Item (a) holds because $\pi \rho_V = \rho_E$.  Item (b) holds for $h = \rho_V(\sigma_\gq)$ by ${\bf G2}$ and Proposition \ref{LProperties}, which also gives (c).   One verifies that any subgroup of $\cP$ satisfying (a)--(c) is conjugate by some $\alpha$ in the radical $\cR$ of $\cP$ to $G_a$ for some $a$.  By Lemma \ref{CocycleRep}, replacing $\rho_V$ by $\rho_W = \alpha \rho_V \alpha^{-1}$ simply changes $V$ to another representative $W$ for the same extension class, now satisfying $\rho_W(G) = G_a$.  In particular, if $a=0$, then $W$ is a split extension.  Since $\rho_W(\sigma_\gq)$ is a very good involution, it is conjugate in $G_a$ to $d(\bfr)$ by Remark \ref{OneVG}.  Applying this inner automorphism to $G_a$ gives  $\rho_W(\sigma_\gq) = d(\bfr)$. 
\end{proof}

\begin{Def}   \label{GaClass}
A $G_a$-class is a class $[V]$ in $\Ext^1_{[2],\Q}(E,E)$ satisfying {\bf G1}--{\bf G3}.
\end{Def}

Fix a pair-resolvent $K$ for $F = \Q(E)$ by  taking $\Gal(F/K)$ to be the centralizer of the transposition $(12)$.  For primes $\gp \vert 2$, the ramification index $e_\gp(K/\Q) = 7$ by Lemma \ref{ramK}.   In addition, any choice of Frobenius at $\gp$ is the product of two 3-cycles in $\cS_7$ so it is not in $\Gal(F/K)$.  Hence, as a special case of Remark \ref{OnePrime},  there is one prime $\gp_K$ over 2 in $K$ and it has residue degree 3.  Let $\Omega_K$ be the maximal elementary 2-extension of $K$ of modulus $\gp_K^6 \, \infty$, where $\infty$ means that ramification is allowed at each archimedean place.   Let $K'$ be a subfield of $\Omega_K$ quadratic over $K$.  Refer to the  diagram of fields and Galois groups below:

\begin{center}
\begin{tikzpicture}[scale=1, thin, baseline=(current  bounding  box.center)]	
    \draw (0,0) -- (0,2);
    \draw (0,0) node {$\bullet$};   \draw (.3,0) node {$\Q$};
    \draw (0,.6) node {$\bullet$};     \draw (.3,.6) node {$K$};
    \draw (0,1.1) node {$\bullet$};   \draw (.95,1.1) node {$F = \Q(E)$};
    \draw (0,2) node {$\bullet$};   \draw (.95,2) node {$L = \Q(V)$};
    \draw (0,.6) -- (-.5,.9) -- (0,2);    \draw (-.5,.9) node {$\bullet$};     \draw (-.55,.65) node {$K'$}; 
    \draw (-.5,.9) -- (-1.25,1.35); \draw (-1.25,1.35) node {$\bullet$};   \draw (-1.6,1.35) node {$\Omega_K$};
    \draw (2.2,2) -- (2.7,2) -- (2.7,.6) -- (2.2,.6);   \draw (2.5,1.3) node {$H$};
    \draw (1.8,2) -- (2.1,2) -- (2.1,.6);   \draw (1.85,1.55) node {$\Gamma_a$}; 
    \draw (1.8,1.1) -- (2.1,1.1) -- (2.1,-.1) -- (1.8,-.1);   \draw (1.85,.5) node {$\cS_7$};
     \draw (2.8,2) -- (3.2,2) -- (3.2,-.1) -- (2.8,-.1);   \draw (3.6,.9) node {$G_a$}; 
    \draw (-.45,1.5) node {$J$};      
\end{tikzpicture}
\end{center}

\begin{prop}   \label{MakeGaFields} 
Let $K$ be the pair-resolvent of $F$.  There is a bijection 
\begin{equation}  \label{corresp}
\{G_a\text{-}{\rm classes} \hspace{3 pt}  [V] \hspace{5 pt} \vert \hspace{4 pt} a \in \{6, 14, 20\}  \} \longleftrightarrow \{{\rm subfields} \hspace{3 pt} K' \subseteq \Omega_K \hspace{2 pt} \vert \hspace{5 pt} \fdeg{K'}{K}=2\}  
\end{equation}
such that $\Q(V)$ is the Galois closure of $K'/\Q$.
\end{prop}

\begin{proof}
Fix $a$ in $\{6,14,20\}$ and let $H = \pi^{-1}(C)$ be the inverse image under the projection map $\pi\!: \, G_a \twoheadrightarrow \iota(\cS_7)$ of the centralizer $C$ of $\bfr = \iota((12))$.  Among the four subgroups of index 2 in $H$, exactly one, say $J$, enjoys both of the following properties: (i) the action of $G_a$ on $G_a/J$ is faithful and (ii) $d(\bfr)$ has exactly 22 fixed points in this action. 

Given a $G_a$-class $[V]$, fix a prime $\gq \vert N$ in $L = \Q(V)$ and choose a representative $V$ satisfying $\rho_V(\Gal(L/\Q)) = G_a$ and $\rho_V(\sigma_\gq) = d(\bfr)$, as in Proposition \ref{GaProp}.  The subfield of of $L$ fixed by $\rho_V^{-1}(H)$ is the pair-resolvent field $K$ for $F/\Q$ as in Notation \ref{PairResDef} independent of the choice of representative $V$.  The fixed field $K'$ of $\rho_V^{-1}(J)$ is quadratic over $K$ and also is independent of the choice of $V$.  Indeed, if $[V'] = [V]$, then $\rho_{V'}= \alpha \rho_V \alpha^{-1}$ for some $\alpha$ in $G_a$ centralizing $d(\bfr)$ by Proposition \ref{GaProp}.  But then $\alpha$ is in $H$, so $\alpha$ normalizes $J$.  Hence $\rho_{V'}^{-1}(J) = \rho_V^{-1}(J)$.

By (ii) and the factorization of the ideal $(N)\cO_K$ given by Lemma \ref{FactorN} all primes over $N$ in $K$ are unramified in $K'/K$. Since the primes $\gp$ over $2$ are unramified in $F/K$ by Lemma \ref{ramK}, we have $\gf_\gp(K'/K) = \gf_\gp(L/F)$, for example by \cite[Lemma C.11]{BK5}.  Hence {\bf G3} implies that $\gf_\gp(K'/K) \le 6$, so $K'$ is contained in $\Omega_K$ and we have a well-defined map $[V] \mapsto K'$ in \eqref{corresp}.

Conversely, let $K'$ be a subfield of $\Omega_K$ quadratic over $K$, $L$ the Galois closure of $K'/\Q$, $G = \Gal(L/\Q)$, $H = \Gal(L/K)$ and $J = \Gal(L/K')$.  Then $L$ properly contains $F$, since each quadratic extension of $K$ in $F$ does ramify at some prime over $N$ by Lemma \ref{RamOverN}.  By Proposition \ref{LProperties}, the generator $\sigma_\gq$ of inertia $\cI_\gq(L/\Q)$ at each prime $\gq \vert N$ is a good involution in $G$ and $\sigma_\gq$ can be chosen in $H$.  Since no prime over $N$ ramifies in $K'/K$, the action of $\sigma_\gq$ on $G/J$ has 22 fixed points.   The following group-theoretic properties of $G$ have been established:
\begin{enumerate}[\hspace{3 pt} i)]
\item There is  a surjection $\pi\!: G \twoheadrightarrow \cS_7$ whose kernel  has exponent 2 and is the radical of $G$.  \vspace{2 pt}

\item The abelianization of $G$ has order 2.  \vspace{2 pt}

\item If $H$ is the inverse image under $\pi$ of the centralizer of a transposition in $\cS_7$, then there is a subgroup $J$ of index 2 in $H$ such that the action of $G$ on $G/J$ is faithful, so $G$ is a transitive permutation group on 42 letters.    \vspace{2 pt} 
\item There is a good involution $g$ in $G$ whose action on $G/J$ has 22 fixed points. 
\end{enumerate}
We have (i), since the radical of $\Gal(L/\Q)$ is $\Gal(L/F)$ and (ii) by Lemma \ref{LProperties}(i).  For (iii), note that  $\fdeg{G}{J} = 42$ and $G$ acts faithfully on $G/J$ because $L$ is the Galois closure of $K'/\Q$.  In addition, $H = \pi^{-1}(\Gal(F/K))$, where $\Gal(F/K)$ is the centralizer of $(12)$ in $\cS_7$.

In the Magma database of 9491 transitive groups of degree 42, items (i)--(iv) hold for exactly three permutation groups up to conjugation in $\cS_{42}$.   They are isomorphic to $G_6$, $G_{14}$ and $G_{20}$, respectively, via an isomorphism sending $g$ in (iv) to a very good involution in $G_a$, conjugate to $d(\bfr)$ by Remark \ref{OneVG}.  All automorphisms of $G_6$ are inner.  For $a = 14$ or 20, the outer automorphism group of $G_a$ has order 2 and non-trivial outer automorphisms move $d(\bfr)$ to good involutions $j$ such that $\rk (j-1) = 6$.

Let $\rho_W\!: \, G \to G_a$ be the representation given by an isomorphism that preserves the conjugacy class of $d(\bfr)$.  Then $\pi \rho_W$ and $\rho_E$ are conjugate in $\iota(\cS_7)$ because the automorphisms of $\cS_7$ are inner.  By surjectivity of $\pi$, there is a representation $\rho_V\!: \, G \to G_a$, conjugate to $\rho_W$, such that $\pi \rho_V = \rho_E$.  In addition, $\rho_V(\sigma_\gq)$ is conjugate to $d(\bfr)$ in $G_a$, so {\bf G2} holds.   Since $K'$ is contained in $\Omega_K$, we have {\bf G1} and {\bf G3}, making $[V]$ a $G_a$-class.

Suppose that $\rho_{V'}\!: \, G \to G_a$ is another representation such that $\pi \rho_{V'} = \rho_E$ and $\rho_{V'}(\sigma_\gq)$ is conjugate to $d(\bfr)$ in $G_a$.    Since the only automorphisms of $G_a$ preserving the conjugacy class of $d(\bfr)$ are inner, we have $\rho_{V'} = \alpha \rho_V \alpha^{-1}$ for some $\alpha$ in $G_a$.  But then $\pi(\alpha) = 1$ because the center of $\cS_7$ is trivial, so $\alpha$ is in the radical $\cR_a$ of $G_a$.  Hence $[V'] = [V]$ and we therefore have a well-defined inverse map from $K'$ to the extension class $[V]$.  
\end{proof}

From now on, $[V]$ is a $G_a$-class and $L = \Q(V)$. In particular, $V$ represents a class in $\Ext^1_{R'}(\cE,\cE)$, where $R' = \Z[\frac{1}{2N}]$ and the conductor bound {\bf G3} holds.  The Mayer-Vietoris sequence of Schoof \cite[Cor. 2.4]{Sch1} with $R = \Z[1/N]$ implies that
$$
\Ext^1_R(\cE,\cE) \to  \Ext^1_{R'}(E,E) \times \Ext^1_{\Z_2}(\cE,\cE) \to \Ext^1_{\Q_2}(E,E)
$$
is exact.  Thus $V$ prolongs to a group scheme $\cV$ over $R$ if and only if there is some class $[\cW]$ in $\Ext^1_{\Z_2}(\cE,\cE)$, as determined in \S\ref{LocalCorners}, such that the images of $[V]$ and $[\cW]$ in $\Ext^1_{\Q_2}(E,E)$ agree.  If so, the other properties of a $G_a$-class guarantee that $[\cV]$ is in $\Ext^1_{[2],\un{E}}(\cE,\cE)$.   Next, we sketch our strategy for examining this patching problem and review the notation required.

Recall that $\gh = \Hom_{\F_2}(E,E)$ and let the cocycle $\psi\!: \, G_\Q \to \gh$ represent the cohomology class in $H^1(G_\Q,\gh)$ associated to the $G_a$-class $[V]$, as in \eqref{RepFromPsi}.  Let $L_\gp$ and $F_\gp$ be the completions of $L$ and $F$ at a fixed prime $\gp \vert 2$.  The decomposition group $\cD_\gp = \Gal(L_\gp/\Q_2)$ admits the quotient $\Delta = \Gal(F_\gp/\Q_2) \simeq \lr{\bfs,\bft}$, where $\bfs$ and $\bft$ are defined in \eqref{bfst}.  By {\bf G3}, $L_\gp$ is contained in the maximal elementary 2-extension $T$ of $F_\gp$ with ray class conductor exponent at most 6.  Let $\res_{\cD_\gp} [\psi]$ denote the restriction to the decomposition group.

In the local theory of \S \ref{LocalCorners}, a class $[\cW]$ in $\Ext^1_{[2],\Z_2}(\cE,\cE)$ gives rise to a  {\em Honda character} $\chi_\cW$ in $\Hom_\rR(\Gamma,\gh)$, where $\Gamma = \Gal(T/F_\gp)$ and $\rR = \F_2[\Delta]$.   As noted before Definition \ref{HondaChar}, $\res_\Gamma$ and $\inf_{L_\gp}^T$ are injective.   Let 
$$
\psi_\gp = \res_\Gamma \, {\rm inf}_{L_\gp}^T \, \res_{\cD_\gp}([\psi]).
$$
It follows that $V$ prolongs to a group scheme over $\Z[1/N]$ if and only if $\psi_\gp = \chi_\cW$ in $\Hom_\rR(\Gamma,\gh)$.  To test whether this last equality holds, it suffices to compare $\psi_\gp$ and $\chi_\cW$ on the generators $g_0, \dots, g_3$ for $\Gamma$ defined before Proposition \ref{Max2}. 

See Definition \ref{Cors} for corner spaces.  Since $g_1$ is in $C_1(\Gamma)$, but $g_2$ and $g_3$ are in $C_2(\Gamma)$, the values of $\chi_\cW$ on these generators lie in the corresponding corner spaces of $\gh$.   Let $E_2$, $E_1$ and $D$ be the irreducible $\F_2[\cD_\gp]$-modules from \S \ref{LocalCorners}, with respective $\F_2$-dimensions $3,3,2$. 

\begin{lem} \label{CornerGroups} 
The $\F_2[\cD_\gp]$-module structure of the the groups $\Gamma_a$ and generators for their corner spaces are as follows.
\begin{enumerate}[$\bullet$]
\item $C_2(\Gamma_{6}) = \lr{\gamma_6}$, $C_1(\Gamma_{6}) = \lr{\ov{\gamma}_6}$ and $\Gamma_6 \simeq E_1 \oplus E_2$, where 
\begin{equation*} 
 \gamma_6 = \left[\begin{matrix}  t & t^2+1 \\  t^2 & t^2 \end{matrix}\right] \quad \text{and} \quad  \ov{\gamma}_6 =  \left[\begin{matrix} t^2 &t \\  t+1 & t \end{matrix}\right]. 
\end{equation*}

\vspace{2 pt}

\item $C_2(\Gamma_{14}) = \lr{\gamma_{14}, \gamma_{14}'}$, $C_1(\Gamma_{14}) = \lr{\ov{\gamma}_{14}, \ov{\gamma}_{14}'}$ and $\Gamma_{14} \simeq E_1^2 \oplus E_2^2 \oplus D$, where
\begin{equation*} 
\gamma_{14} = \left[\begin{matrix} 0 & t^2+1 \\  0 & 0 \end{matrix}\right], \quad
\gamma_{14}' = \left[\begin{matrix} t & 0 \\  0 & t ^2\end{matrix}\right], \quad
\ov{\gamma}_{14} = \left[\begin{matrix} 0 & 0 \\  t+1 & 0 \end{matrix}\right], \quad 
\ov{\gamma}_{14}' = \left[\begin{matrix} t^2  & 0 \\  0 & t \end{matrix}\right].
\end{equation*}

\vspace{2 pt}

\item  $C_2(\Gamma_{29}) = \lr{\gamma_{29}, \gamma_{29}', \gamma_{14}, \gamma_{14}'}$ and $C_1(\Gamma_{29}) = \lr{\ov{\gamma}_{29}, \ov{\gamma}_{29}', \ov{\gamma}_{14}, \ov{\gamma}_{14}'}$, where
\begin{equation*} 
\gamma_{29} = \left[\begin{matrix} 0 & 1 \\  t^2 & 0 \end{matrix}\right],  \quad 
\gamma_{29}' = \left[\begin{matrix} t  & 0 \\  t^2 & 0 \end{matrix}\right], \quad 
\ov{\gamma}_{29} = \left[\begin{matrix} 0 & t \\ 1  & 0 \end{matrix}\right],  \quad 
\ov{\gamma}_{29}' = \left[\begin{matrix} t^2 & t \\ 0  & 0 \end{matrix}\right]
\end{equation*}
and $\Gamma_{29} \simeq E_1^4 \oplus E_2^4 \oplus D^2 \oplus \F_2$.  
\end{enumerate}
Moreover, $\Gamma_{20} = \Gamma_6 \oplus \Gamma_{14}$ and $\Gamma_{35} = \Gamma_6 \oplus \Gamma_{29}$, with corresponding sums of corners.
\end{lem}
The proof consists of a Magma verification.\qed

\begin{prop}  \label{Cond4}
Let $V$ represent a $G_a$-class and $L_a = \Q(V)$.  Let $\gf_\gp(L_a/F)$ be the ray class conductor exponent of the extension $\wtL/\wtF$, where $\wtL$ and $\wtF$ are the completions of $L_a$ and $F$ at a place $\gp$ over $2$.  If $V$ prolongs to a group scheme over $\Z[1/N]$, then $\gf_\gp(L_a/F) \le 6$.  In addition, the following stricter bounds must hold:
$$
\gf_\gp(L_6/F)  \le 2, \hspace{40 pt}  \gf_\gp(L_{14}/F)  \le 4, \hspace{40 pt}  \gf_\gp(L_{29}/F)  \le 4.
$$
\end{prop}

\begin{proof}
We use some of the notation recalled above.  If $\psi_\gp(g_3)$ is not trivial and matches $\chi_\cW(g_3)$ for some Honda cocycle $\chi_\cW$, then $\psi_\gp(g_3) = \left[ \begin{smallmatrix} 0 & 0 \\ t^2 & 0 \end{smallmatrix} \right]$ by the local theory in Lemma \ref{LocalChi}.  But this element is not in any of the global corner groups $C_2(\Gamma_a)$ for $a = 6$, 14 or 29, according to Lemma \ref{CornerGroups}.  Hence $\gf_\gp(L_a/F) \le 4$ in those cases.  Suppose further that $a=6$.  If $\psi_\gp(g_2)$ matches $\chi_\cW(g_2)$ for some Honda cocycle $\chi_\cW$, then $\psi_\gp(g_2)$ has the form $\left[ \begin{smallmatrix} * & 0 \\ * & * \end{smallmatrix} \right]$, also by Lemma \ref{LocalChi}.  But $C_2(\Gamma_6) = \lr{\gamma_2}$ by Lemma \ref{CornerGroups}, forcing $\psi_\gp(g_2) = 0$.  Hence $\gf_\gp(L_6/F) \le 2$.
\end{proof}

We next recall the concept of an amiable field from \S1.  As is standard for this section, $F$ is the Galois closure of a favorable heptic field $F_0$ with $\disc_{F_0/\Q}=\pm 2^{6} N$ and $N$ is prime.  Also, $K$ is a pair-resolvent for $F$ and $\gp$ is the prime over 2 in $K$.

\begin{Def}\label{amiable}
Write $\Omega_K$ for the maximal elementary 2-extension of $K$ unramified outside $\{2,\infty\}$, such that the ray class conductor exponent $\gf_\gp(\Omega_K/K) \le 6$.  We say that $F$ is {\em amiable} if either $\Omega_K = K$ or one of the following conditions holds:  
\begin{enumerate} [{\rm i)}] 
\item $\fdeg{\Omega_K}{K}=2$ and $\gf_\gp(\Omega_K/K) =6$, or \vspace{2 pt}
\item $\fdeg{\Omega_K}{K}=2$, $\gf_\gp(\Omega_K/K) =4$ and the Galois closure of $\Omega_K/\Q$ has group $G_6$.
\end{enumerate}
\end{Def}

\begin{prop}  \label{AmiNec}
For $F$ to be amiable, both of the following conditions must hold:
\begin{enumerate}[{\rm i)}]
\item the narrow class number of $K$ is odd and  \vspace{2 pt}

\item the number of real embeddings of $F_0$ satisfies $r_1 \le 3$.  \end{enumerate}
\end{prop}

\begin{proof}
If (i) fails, then $\Omega_K$ contains a quadratic extension of $K$ whose conductor exponent is 0, so $F$ is not amiable.  If (ii) fails, then $r_1 \ge 5$, so $\dim_{\F_2} \Gal(\Omega_K/K) \ge 4$ by Lemma \ref{TooReal}, also contradicting amiability.
\end{proof}

\begin{lem} \label{ab}
Let $[V]$ be a $G_a$-class and let $[V']$ be a $G_{a'}$-class.  
\begin{enumerate}[{\rm i)}]
\item The Baer sum  $[V''] = [V] + [V']$ is a $G_b$-class for some $b$.   \vspace{2 pt}

\item If there is a surjection $f_{a,b} \!: G_a \twoheadrightarrow G_b$ in diagram \eqref{GaDiagram}, then the Galois module for $f_{a,b} \, \rho_V$ represents a $G_b$-class. 
\end{enumerate}
\end{lem}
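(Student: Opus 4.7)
The plan is that both parts reduce to verifying conditions \textbf{G1}--\textbf{G3} of Definition \ref{GaClass} for the relevant new extension class, after which Proposition \ref{GaProp} delivers the conclusion that it is a $G_c$-class for some $c$; this $c$ is the desired $b$.

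For part (i), I would work at the cocycle level. First I will use Proposition \ref{GaProp} to choose representatives $V$ and $V'$ with $\rho_V(\sigma_\gq) = \rho_{V'}(\sigma_\gq) = d(\bfr)$; the associated $\gh$-valued cocycles $\psi,\psi'$ then both vanish on $\sigma_\gq$. Since the Baer sum of extensions corresponds to addition in $H^1(G_\Q,\gh)$, the class $[V'']$ is represented by $\psi+\psi'$, which again vanishes on $\sigma_\gq$. Hence $\rho_{V''}(\sigma_\gq) = d(\bfr)$, proving \textbf{G2} via $\rk(d(\bfr)-I)=2$. For \textbf{G1} and \textbf{G3}, I will use that $\Q(V'')$ lies in the compositum $\Q(V)\Q(V')$: unramifiedness outside $\{2,\infty\}$ and the conductor bound $\gf_\gp \le 6$ at primes over $2$ transfer from $V$ and $V'$ to the compositum, and hence to any subfield. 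Proposition \ref{GaProp} then supplies the required $G_b$.

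For part (ii), let $W$ denote the Galois module afforded by $f_{a,b}\rho_V$. Because $f_{a,b}$ restricts to the identity on $G_0$, we have $\pi\circ f_{a,b}\circ\rho_V = \pi\circ\rho_V = \rho_E$, so $W$ is an exponent-$2$ extension of $E$ by $E$. The inclusion $\ker\rho_V \subseteq \ker(f_{a,b}\rho_V)$ yields $\Q(W) \subseteq \Q(V)$, from which \textbf{G1} and \textbf{G3} are inherited. For \textbf{G2}, after picking $V$ with $\rho_V(\sigma_\gq) = d(\bfr)$, we have $(f_{a,b}\rho_V)(\sigma_\gq) = d(\bfr)$ since $d(\bfr)\in G_0$ is fixed by $f_{a,b}$. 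Surjectivity of $f_{a,b}$ then forces the image of $f_{a,b}\rho_V$ to be precisely $G_b$, so the value of $b$ is pinned down (rather than merely existing as in part (i)).

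The only step requiring a moment's care is the cocycle description of the Baer sum in part (i) --- one must be consistent about the set-theoretic section used to define $\psi$. Once the sections are chosen so that $\psi(\sigma_\gq) = \psi'(\sigma_\gq) = 0$, the argument is mechanical, and I do not anticipate any genuine obstacle.
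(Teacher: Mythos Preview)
Your proposal is correct and follows essentially the same route as the paper's proof: both parts verify \textbf{G1}--\textbf{G3} for the new class and then invoke Proposition~\ref{GaProp}, with \textbf{G1} and \textbf{G3} inherited from the compositum (part~i) or the containing field (part~ii), and \textbf{G2} handled via the cocycle description and the fact that $f_{a,b}$ is the identity on $G_0$. The only cosmetic difference is that in part~(i) the paper normalizes just one representative, arranging $\psi(\sigma_\gq)=0$ so that $\rho_{V''}(\sigma_\gq)=\rho_{V'}(\sigma_\gq)$, and then reads off \textbf{G2} directly from the $G_{a'}$-class hypothesis on $[V']$; your symmetric normalization of both $V$ and $V'$ works equally well once you fix a single prime $\gq$ over $N$ (say in the compositum $LL'$) before applying Proposition~\ref{GaProp} twice.
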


\begin{proof} 
If $L = \Q(V)$ and $L' = \Q(V')$, then $L'' = \Q(V'')$ is a subfield of the compositum $LL'$.   Conditions {\bf G1} and {\bf G3} hold for ramification in $L''/F$ because they are valid for $L$ and $L'$.  By Proposition \ref{GaProp}, we can choose the representative $V$ and a prime $\gq$ over $N$ in $LL'$ such that $\rho_V(\sigma_\gq) = d(\bfr)$.  Let the cocycles $\psi$ and $\psi'$ belong to $V$ and $V'$, as in \eqref{RepFromPsi}.  Then $\psi'' = \psi + \psi'$ represents the  cocycle class for $V''$.   Since $\psi(\sigma_\gq) = 0$, we have $\psi''(\sigma_\gq) = \psi'(\sigma_\gq)$.  Conclude that {\bf G2} holds for $[V'']$ by using $\rho_{V''}(\sigma_\gq)$. 

For part (ii), let $L'$ be the subfield of $L$ fixed by $\rho_V^{-1}(\ker f_{a,b})$.  Then $L'$ contains $F$ and $f_{a,b} \, \rho_V$ induces an isomorphism $\rho_{V'}\!: \, \Gal(L'/\Q) \to G_b$, where $V'$ is the corresponding Galois module.  The ramification conditions in {\bf G1} and {\bf G3} hold for subfields of $L$ containing $F$.    Since $f_{a,b}$ is the identity on $G_0$ we find that $\rho_{V'}(\sigma_\gq) = \rho_V(\sigma_\gq)$.  Hence {\bf G2} also holds for $V'$.  
\end{proof}

\begin{prop}\label{ext2} 
If $F=\Q(E)$ is amiable, then $\Ext^1_{[2],\un{E}}(\cE,\cE)=0$. 
\end{prop}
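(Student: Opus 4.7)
\emph{Proof plan.} Given any class $[\cV]\in\Ext^1_{[2],\un E}(\cE,\cE)$ with associated Galois module $V$, the module satisfies {\bf G1}--{\bf G3}, so Proposition~\ref{GaProp} places $\rho_V(\Gal(\Q(V)/\Q))$ in conjugacy with some $G_a$ for $a\in\{0,6,14,20,29,35\}$. Because $V$ prolongs to $\cV$ over $\Z[\frac{1}{N}]$, Proposition~\ref{Cond4} supplies the refined conductor bounds $\gf_\gp(L_a/F)\le 2,4,4$ for $a=6,14,29$, respectively. The task is to force $a=0$.

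\emph{Classes with $a\in\{6,14,20\}$.} Proposition~\ref{MakeGaFields} bijects such classes with quadratic subfields $K'\subseteq\Omega_K$ over $K$, with $\Q(V)$ the Galois closure of $K'/\Q$ (group $G_a$) and $\gf_\gp(K'/K)=\gf_\gp(L_a/F)$. I argue by cases on the amiability hypothesis. If $\Omega_K=K$, no quadratic subfield exists and $[V]$ is trivial. In case (ii), the unique $K'=\Omega_K$ has Galois closure group $G_6$ and $\gf_\gp=4$, so $a=6$; but the refined bound $\le 2$ is violated. In case (i), $K'=\Omega_K$ has $\gf_\gp=6$, which violates the refined bounds for $a=6$ and $a=14$. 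The residual case $a=20$ is dispatched by Lemma~\ref{ab}(ii): applying $f_{20,6}\colon G_{20}\twoheadrightarrow G_6$ to $V$ yields a non-trivial derived $G_6$-class $V_6$, non-triviality being ensured by the surjectivity of $f_{20,6}$ on the image $G_{20}$. By the bijection, $V_6$ corresponds to a quadratic subfield of $\Omega_K$ with Galois closure group $G_6$; but the only candidate subfield is $\Omega_K$ itself, whose Galois closure has group $G_{20}$, a contradiction.

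\emph{Classes with $a\in\{29,35\}$.} For $a=29$, apply Lemma~\ref{ab}(ii) with $f_{29,14}\colon G_{29}\twoheadrightarrow G_{14}$. The derived $G_{14}$-class $V_{14}$ is non-trivial and its field satisfies $L_{14}\subseteq L_{29}$, so the refined bound yields $\gf_\gp(L_{14}/F)\le 4$. Hence the corresponding quadratic subfield $K'_{14}\subseteq\Omega_K$ has $\gf_\gp\le 4$ and Galois closure group $G_{14}$, compatible with no amiability scenario. For $a=35$, use the compositions $f_{35,6}:=f_{35,20}\circ f_{20,6}$ and $f_{35,14}:=f_{35,20}\circ f_{20,14}$ to extract non-trivial derived $G_6$- and $G_{14}$-classes from $V$. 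By the bijection these require two distinct quadratic subfields of $\Omega_K$ with Galois closure groups $G_6$ and $G_{14}$ respectively, whereas amiability admits at most one such subfield, with a single Galois closure type. Thus $a\in\{29,35\}$ is also excluded, leaving only $a=0$.

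\emph{Main obstacle.} The delicate point is that Proposition~\ref{Cond4} gives no refined conductor bound for $a=20$ or $a=35$, so one cannot dispose of these cases by conductor arithmetic alone. The resolution exploits the fiber-product structure that underlies Lemma~\ref{ab}(ii): the surjections $f_{a,b}$ extract derived classes of lower type whose existence imposes rigid and incompatible demands on the set of quadratic subfields of $\Omega_K$, via the bijection of Proposition~\ref{MakeGaFields}. The amiability hypothesis is calibrated precisely so that those demands can never be simultaneously met.
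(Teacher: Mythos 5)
Your proof is correct and follows essentially the same route as the paper's: reduce to $G_a$-classes via Proposition \ref{GaProp}, use the bijection of Proposition \ref{MakeGaFields} with quadratic subfields of $\Omega_K$, the refined conductor bounds of Proposition \ref{Cond4}, and Lemma \ref{ab}(ii) to extract derived $G_6$- and $G_{14}$-classes for $a\in\{20,29,35\}$. The only cosmetic difference is that you eliminate $a=20$ by a Galois-group mismatch on the unique quadratic subfield, whereas the paper handles $a=20$ and $a=35$ uniformly by producing two independent quadratic subextensions of $\Omega_K$, contradicting $\fdeg{\Omega_K}{K}\le 2$.
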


\begin{proof} 
If $K$ is a pair-resolvent field of $F$, then $\fdeg{\Omega_K}{K} \le 2$ by definition of an amiable field.  Suppose that there is a non-trivial class $[\cV]$ in $\Ext^1_{[2],\un{E}}(\cE,\cE)$.  By Proposition  \ref{GaProp}, $[V]$ is $G_a$-class with $a \ne 0$ and $L = \Q(V)$ properly contains $F$.

If $a=20$ or 35, we have both a $G_6$-class $[V']$ and a $G_{14}$-class $[V'']$, according to diagram \eqref{GaDiagram} and Lemma \ref{ab}(ii).  Then Proposition \ref{MakeGaFields} shows that there are two independent quadratic extensions of $K$ in $\Omega_K$ corresponding to $V'$ and $V''$, which contradicts $\fdeg{\Omega_K}{K} \le 2$.

If $a = 6$ or 14, Proposition \ref{MakeGaFields} provides a quadratic extension $K'$ of $K$ in $\Omega_K$ corresponding to $V$ and so $K' = \Omega_K$.   By Proposition \ref{Cond4}, $\gf_\gp(K'/K) \le 2$ if $a = 6$ and $\gf_\gp(K'/K) \le 4$ if $a = 14$.  Either case violates amiability.  

If $a = 29$, then $\gf_\gp(L/F) \le 4$ by Proposition \ref{Cond4}.  Use \eqref{GaDiagram} and Lemma \ref{ab}(ii) to obtain a $G_{14}$ class $[V']$ with $L' = \Q(V')$ contained in $L$.  Hence $\gf_\gp(L'/F) \le 4$, but no such $G_{14}$-class is permitted by amiability.
\end{proof}

\begin{theo}  \label{main}
Let $A$ be a favorable abelian threefold of prime conductor $N$ such that $F=\Q(A[2])$ is amiable. If $B$ is a semistable abelian variety of dimension $3d$ and conductor $N^d$, with $B[2]$  filtered by $\cE = A[2]$, then $B$ is isogenous to $A^d$. 
\end{theo} 

\begin{proof} 
To apply Criterion \ref{Bcrit}, it suffices to show that $\Ext^1_{[2],\un{E}}(\cE,\cE)=0$.  If $K$ is a pair-resolvent  of $F$, then $\fdeg{\Omega_K}{K} \le 2$ since $F$ is  amiable.  Suppose that there is a non-trivial class $[\cV]$ in $\Ext^1_{[2],\un{E}}(\cE,\cE)$.  By Proposition  \ref{GaProp}, $[V]$ is $G_a$-class with $a \ne 0$ and $L = \Q(V)$ properly contains $F$. We next eliminate the other values of $a$.

If $a=20$ or 35, we have both a $G_6$-class $[V']$ and a $G_{14}$-class $[V'']$, according to diagram \eqref{GaDiagram} and Lemma \ref{ab}(ii).  Then Proposition \ref{MakeGaFields} shows that there are two independent quadratic extensions of $K$ in $\Omega_K$ corresponding to $V'$ and $V''$, thus contradicting $\fdeg{\Omega_K}{K} \le 2$.

If $a = 6$ or 14, Proposition \ref{MakeGaFields} provides a quadratic extension $K'$ of $K$ in $\Omega_K$ corresponding to $V$ and so $K' = \Omega_K$.   By Proposition \ref{Cond4}, $\gf_\gp(K'/K) \le 2$ if $a = 6$ and $\gf_\gp(K'/K) \le 4$ if $a = 14$.  Either case violates amiability.  

If $a = 29$, then $\gf_\gp(L/F) \le 4$ by Proposition \ref{Cond4}.  Use \eqref{GaDiagram} and Lemma \ref{ab}(ii) to obtain a $G_{14}$ class $[V']$ with $L' = \Q(V')$ contained in $L$.  Hence $\gf_\gp(L'/F) \le 4$, but no such $G_{14}$-class is permitted by amiability.
\end{proof}

\section{Some data for heptic favorable curves}\label{data} 
Recall Definition \ref{amiable} and Proposition \ref{AmiNec} about amiability.  Consider the subsets of favorable heptic fields $F_0$ enjoying, respectively, the following properties.

\vspace{5 pt}

\noindent\hspace{3pt} $R$:  the number of real places of $F_0$ satisfies $r_1 \le 3$. \\
\hspace*{3pt} $Z$:  the ray class field $\Omega_K = K$, where $K$ is the corresponding pair-resolvent.  \\
\hspace*{3pt} $\gf 4$:   $\fdeg{\Omega_K}{K}=2$, $\gf_\gp(\Omega_K/K)=4$ and the Galois closure of $\Omega_K/\Q$ has group $G_6$.  \\
\hspace*{3pt} $\gf 6$:   $\fdeg{\Omega_K}{K}=2$ and $\gf_\gp(\Omega_K/K)=6$.  \\
\hspace*{3pt} $A$:  $F_0$ is amiable, that is $A = Z \cup \gf 4 \cup \gf 6$.

\vspace{5 pt}

The simplest favorable curves of prime conductor $N$ have the shape  
$$
y^2+y=g(x)=x^7+a_0x^6+a_1x^5+a_2x^4+a_3x^3+a_4x^2+a_5x+a_6
$$
with the  discriminant of $f(x)=1+4g(x)$ equal to $\pm2^{12} N$. The favorable number field $F_0$ defined by a root of $f(x)$ has discriminant $\pm2^6 N$. A small search gave us  38759 favorable curves of prime conductors at most $10^{10}$. Of those, 8171 have at most 3 real places and $N\le 10^9$. We describe the distribution of the amiable curves among them in Table \ref{data_curved}. 

In Column $j$ of Tables  \ref{data_curved} and \ref{data_fields}, we consider primes $N$ between $10^8 j$ and $10^8(j+1)$.  In the respective rows, we count fields $F_0$ in the corresponding sets.  

\begin{table}[h]
\begin{caption} {Amiable  favorable curves with  $r_1\le 3$  and  $N\le 10^9$} \label{data_curved}\end{caption}
\centering{
{\small \begin{tabular}{|c||c|c|c|c|c|c|c|c|c|c||c|}
\hline
$j$ &0&1& 2&3&4& 5&6& 7&8&9& Total\\
\hline
R& 2069& 1041&815&763&706&622&574&538&534&509&8171\cr
\hline
Z&35& 8&13&8& 10& 13& 10&9&  9 &11&126\cr
$ \gf 4$&163& 53& 41& 40& 31& 27& 42& 21& 22& 27&467\cr
$ \gf 6$&256&103& 73& 62& 67& 59& 50& 39& 54& 42&805\cr
\hline
A&454&164&127&110&108&99&102&69&85&80  &1398\cr
\hline
\end{tabular}}}
\end{table}

In addition to the table, we summarize what was found about the even sparser collection of 24132 favorable curves  with $10^9\le N\le 10^{10}$ and at most 3 real places. Of those, 472 satisfy Z, 1110 satisfy $\gf 4$ and 2177 satisfy $\gf 6$ for a total of 3759 amiable curves. The ten amiable  curves of largest conductors found  are given by $y^2=f(x)$ with $f(x)$ in Table \ref{large_curves} and  the last column indicates the aspect of amiability that applies.

\begin{table}[h]
\begin{caption} {Amiable curves with conductor near   $10^{10}$} \label{large_curves}\end{caption}
\centering{
{\small \begin{tabular}{|l|l| c |}
\hline
\multicolumn{1}{|c|}{$f(x)$} &Conductor & \\
\hline
$4x^7 - 12x^6 - 4x^5 - 4x^4 - 12x^3 - 8x^2 + 1$&9936420433 &  $\gf 6$\\
$4x^7 - 8x^5 - 20x^4 + 16x^3 + 48x^2 + 28x + 5$&9942023503 &  $\gf 4$ \\
$4x^7 + 8x^6 - 12x^5 - 28x^4 - 8x^3 + 28x^2 + 24x + 5$&9946958657 &  $\gf 6$\\
$4x^7 - 16x^6 + 28x^5 - 16x^4 - 12x^3 + 16x^2 + 4x - 3$&9950037247  & Z\\
$4x^7 + 12x^5 - 4x^4 - 20x^3 + 20x^2 - 8x + 1$&9951742121 & $\gf 6$ \\
$4x^7 + 12x^6 + 28x^5 + 36x^4 + 32x^3 + 16x^2 - 4x + 1$&9959376079  & $\gf 4$ \\
$4x^7 + 4x^6 - 8x^5 + 4x^4 + 4x^3 - 24x^2 + 24x - 7$&9979868599 & $\gf 6$\\
$4x^7 - 8x^6 + 20x^5 - 36x^4 - 16x^3 + 52x^2 + 4x - 15$&9984318889 & $\gf 4$ \\
$4x^7 + 8x^6 - 4x^5 - 16x^4 + 8x^3 - 4x^2 + 1$&9988954033 & $\gf 4$ \\
$4x^7 + 20x^5 + 12x^4 + 16x^3 + 32x^2 - 16x - 7$&9988955521 & $\gf 4$\\
\hline
\end{tabular}}}
\end{table}
Andrew Sutherland \cite{Suth} has a large database of   hyperelliptic curves of genus 3. Of those,  660 are favorable, including  171 amiable curves.  In fact, the favorable curves in that data base with the eight smallest conductors, namely  
$$
[ 10487, 13399, 18839, 36383, 37321, 38567, 42239, 42473]
$$
are amiable.

\vspace{5 pt}

With no claim of completeness, we gathered 54270 non-isomorphic favorable fields of prime conductor $N\le10^9$.  Some are fields whose Galois closure is the two-division field of a favorable curve.  Others are favorable fields defined by polynomials of the shape 
$$
f=x^7+ 2\,(a_0x^6+a_1x^5+a_2x^4+a_3x^3+a_4x^2+a_5x+2a_6+1),
$$ 
with $a_i\le 20$. We  analyzed these heptic fields as to amiability in Table \ref{data_fields}.

\begin{table}[h]
\begin{caption} {Amiable  favorable fields with  $r_1\le 3$  and  $N\le 10^9$} \label{data_fields}\end{caption}
\centering{
{\small \begin{tabular}{|c||c|c|c|c|c|c|c|c|c|c||c|}
\hline
$j$ &0&1& 2&3&4& 5&6& 7&8&9& Total\\
\hline
R&13889&7265&5572&5343&4688&4319&3994&3812&3658&3328&55868\\
\hline
Z&1746&664&562&490&446&370&367&349&301&291&5586\\ 
$ \gf 4$&1317&591&462&402&316&300&316&250&262&245&4461\\
$ \gf 6$&3308&1467&1122&992&905&809&713&725&655&618&11314\\
\hline
A& 6371&2722&2146&1884&1667&1479&1396&1324&1218&1154&21361\\
\hline
\end{tabular}}}
\end{table}

For comparison, we had found only 1302 out of the  2416 favorable fields with $N\le 31 250 000$ and at most 3 real embeddings in the complete table of heptic fields with discriminant of absolute value at most $2\!\cdot\!10^9$ determined by Driver and Jones in \cite{DJ}.  Of the latter,  530 have property Z, 691 have property $\gf 6$ and 243  have property $\gf 4$  for a total of  $1464$ amiable fields.  

\vspace{10 pt}

\noindent {\em Acknowledgements}.   We thank the anonymous referee for suggestions that helped to improve the exposition.  Research of the second author was partially supported by PSC-CUNY Awards, cycles 50 and 52.  On behalf of all authors, the corresponding author states that there is no conflict of interest.  All accessible data appears in the manuscript.

\appendix \numberwithin{equation}{section}

\section{Conductors for Artin-Schreier-like extensions} \label{ArtinSchreier}

We use the conventions of Serre \cite[IV]{Ser1} for conductors.  Let $G = \Gal(L/K)$ be the Galois group of a finite extension of local fields of residue characteristic $p$.  Write $G_j$ for the $j$-th ramification subgroup of $G$ and $c_{L/K} = \max\{j \, \vert \, G_j \ne \{1\} \}$.  The {\em conductor exponent} of $L/K$ is given by 
$$
\gf(L/K) = 1 + \frac{1}{\vv{G_0}} \left( \vv{G_1} + \dots \vv{G_{c_{L/K}}} \right).
$$
If $H$ is a subgroup of $G$, then $H_j = H \cap G_j$.  In particular, if $G_1 \subseteq H$ and $F$ is the fixed field of $H$, then $H_j = G_j$ for $j \ge 1$ and $c_{L/F} = c_{L/K}$, so
\begin{equation} \label{condH}
\gf(L/K) = 1 +\frac{1}{\fdeg{G_0}{H_0}} \left( \gf(L/F)-1 \right).
\end{equation}

Assume from now on that $K$ is an unramified extension of $\Q_p$ with ring of integers $\cO_K$ and fixed algebraic closure $\ov{K}$.  Finding the field of points of a group scheme $\cV$ over $\cO_K$ from the linear algebra of its associated Honda system often leads to rather complicated congruences.  In \S \ref{ptsEEp}, such congruences were simplified by a suitable change of variables, to arrive at a characteristic 0 version of Artin-Schreier theory.  A key property is that distinct solutions to our Artin-Schreier-like equations differ by units, although the solutions themselves are not integral.

Let $q = p^n$ with $n \ge 1$ and let $F$ be a finite extension of $K(\Mu_{q-1})$ in $\ov{K}$ with ring of integers $\cO_F$, maximal ideal $m_F$ and valuation $v_F$, normalized so that $v_F(p) = e_F$ is the ramification index of $F/\Q_p$.  

\begin{prop} \label{AS}
Let $f(x) = x^q-x+C$, where $C = u w^{-p^m}$ with: \vspace{2 pt}

\centerline{\begin{tabular}{l l l l}
$\bullet$ $u$ a unit in $\cO_{K}$, \hspace{15 pt} &$\bullet$ $w$ in $\gm_F$, \hspace{15 pt} &$\bullet$ $p \nmid v_F(w)$, \hspace{15 pt} &$\bullet$  $0 \le m < n$.  \vspace{2 pt}
\end{tabular}}

\noindent   Let $L$ be the splitting field of $f$.  If $\ord_p(C) > q/(1-q)$, then $\Gal(L/F) \simeq \F_q$ is an elementary abelian $p$-group and $L/F$ is totally ramified, with conductor exponent $\gf(L/F) = v_F(w)+1$.
\end{prop}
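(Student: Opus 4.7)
The strategy is to pass to a tame--wild tower. Since $\gcd(v_F(w),p)=1$, the field $F_1 = F(w_1)$ with $w_1^{p^{n-m}}=w$ is totally ramified of degree $p^{n-m}$ over $F$, and $w_1^q = w^{p^m}$. Substituting $\alpha = \gamma/w_1$ into $\alpha^q - \alpha + C = 0$ and clearing $w_1^q$ produces the integral-coefficient equation
$$ g(\gamma) \, = \, \gamma^q - w_1^{q-1}\gamma + u \, = \, 0 $$
over $F_1$. Since $\F_q$ embeds in the residue field of $F$ and the $q$-th power map is a bijection on that residue field's multiplicative group (its order being coprime to $q$), there is $\tau \in \cO_F$ with $\tau^q = -u$. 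The substitution $\gamma = \tau + z$ transforms $g$ into
$$ h(z) \, = \, z^q + \sum_{k=2}^{q-1}\binom{q}{k}\tau^{q-k}z^k + (q\tau^{q-1}-w_1^{q-1})z - w_1^{q-1}\tau. $$

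Write $t = \ord_p(w_1^{q-1})$; the hypothesis $\ord_p(C) > q/(1-q)$ translates to $t < 1 \le n$. Hence the $z^0$ and $z^1$ coefficients both have $\ord_p = t$ (the $q\tau^{q-1}$ contribution being subdominant at $\ord_p = n$), while each intermediate coefficient $\binom{q}{k}\tau^{q-k}$ has $\ord_p = n - \ord_p(k) \ge 1 > t$. The Newton polygon of $h$ is therefore the single segment from $(0,t)$ to $(q,0)$, so every root satisfies $\ord_p(z) = t/q$. Writing $s = v_F(w)$, one computes $v_{F_1(\gamma)}(z) = e(F_1(\gamma)/F_1)\cdot(q-1)s/q$; since $\gcd((q-1)s,q)=1$ we are forced to $e(F_1(\gamma)/F_1) = q$, so $g$ is irreducible and $F_1 L = F_1(\gamma)$ is totally ramified of degree $q$ over $F_1$. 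The tower relations $[F_1 L : F] = q \cdot p^{n-m}$ and $[F_1 L : L] \le [F_1 : F] = p^{n-m}$ then yield $[L:F] = q$, and the total ramification of $F_1 L/F$ implies that of $L/F$.

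For each $a \in \F_q$ let $\tilde a \in \cO_F$ be its Teichm\"uller lift. Using $\tilde a^q = \tilde a$, $f(\alpha) = 0$ and the hypothesis-implied bound $n + (q-1)\ord_p(\alpha) > 0$, every term of $f(\alpha - \tilde a)$ has strictly positive $\ord_p$ while $f'(\alpha - \tilde a) \equiv -1 \pmod{\gm_L}$; Hensel's lemma then produces a root $\alpha_a = \alpha - \tilde a + \epsilon_a \in L$ of $f$ with $\ord_p(\epsilon_a) > 0$. The $q$ roots $\{\alpha_a\}_{a\in\F_q}$ are distinct, so $L$ is the splitting field of $f$ and $L/F$ is Galois. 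Defining $a_\sigma$ by $\sigma(\alpha) = \alpha_{a_\sigma}$ gives a bijection $\Gal(L/F) \to \F_q$, and the congruence $\tilde a + \tilde b \equiv \widetilde{a+b} \pmod{p}$ combined with reduction modulo $\gm_L$ shows $a_{\sigma\tau} = a_\sigma + a_\tau$. The anticipated main obstacle is the conductor $\gf(L/F) = v_F(w) + 1$: one must build an explicit uniformizer $\pi_L$ of $L$ from $\alpha$, $\pi_F$ and the Hensel corrections $\epsilon_a$, compute $v_L(\sigma_a(\pi_L) - \pi_L)$ to locate the single break of the lower-numbering ramification filtration, and apply Hasse--Arf to obtain the claimed conductor exponent.
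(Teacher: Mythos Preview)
Your approach to showing that $[L:F]=q$ and $L/F$ is totally ramified is genuinely different from the paper's and is essentially sound, though with two small issues.  First, you cannot in general arrange $\tau^q=-u$ exactly in $\cO_F$ (the derivative of $X^q+u$ vanishes modulo $p$, so Hensel does not apply); you only get $\tau^q\equiv -u\pmod{p}$, which is still enough for the Newton polygon argument since $t<1$.  Second, your assertion $F_1L=F_1(\gamma)$ and the degree count logically require the Hensel paragraph to come first (to know $L=F(\alpha)$); as written the tower relations only give $[L:F]\ge q$.  The paper instead stays inside $L$ throughout and constructs a single element whose valuation forces $e(L/F)=q$, avoiding the auxiliary wild extension $F_1/F$.

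The genuine gap is the conductor.  You have reduced yourself to the natural generator $\alpha$, for which $v_L(\alpha)=-p^m v_F(w)$ and $\sigma(\alpha)-\alpha$ is a unit.  When $m=0$ this immediately yields the ramification break at $v_F(w)$ and hence $\gf=v_F(w)+1$.  But when $m>0$, the valuation $v_L(\alpha)$ is divisible by $p$, so the standard identity $v_L(\sigma(t)/t-1)=v_L(\sigma(\pi_L)/\pi_L-1)$ fails for $t=\alpha$, and ``$\sigma(\alpha)-\alpha$ is a unit'' gives only an inequality on the break, not its exact value.  Your alternative of working with $z$ in $F_1L$ does give the break of $F_1L/F_1$, but transporting this back to $L/F$ through the wildly ramified extension $F_1/F$ requires the Herbrand function of $F_1/F$ and is not at all straightforward.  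The paper's key step is to produce the element
\[
t \;=\; \alpha^{p^{\,n-m}} + u_1 w^{-1}\qquad\text{with }u_1^{p^m}\equiv u\pmod{p},
\]
and to check (using $\alpha^q=\alpha-uw^{-p^m}$ and binomial estimates) that $t^{p^m}=\epsilon\alpha$ for a unit $\epsilon$.  This forces $v_L(t)=-v_F(w)$, coprime to $p$, while $\sigma(t)-t\equiv\zeta_\sigma^{p^{\,n-m}}\pmod{\gm_L}$ is a unit; the conductor then drops out of \cite[Prop.~C.5]{BK5}.  Your sketch ``build a uniformizer from $\alpha$, $\pi_F$ and the Hensel corrections'' does not suggest this construction, and without it the case $m>0$ is not handled.
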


\begin{proof}
For $\alpha$ and $\beta$ in $L$ and an ideal $\ga$ of $\cO_L$, write $\alpha=\beta + \rO(\ga)$ if $\alpha-\beta$ is in $\ga$.  To estimate interior terms of a binomial expansion, recall that if $1 \le j \le q-1$, then $\ord_p \binom{q}{j} = n - \ord_p{j} \ge 1$. 

Fix a root $\theta$ of $f$ in $L$, let $g(x) = f(x+\theta)$ and $M = F(\theta)$.   By assumption, $\ord_p(C) < 0$, so $\ord_p(\theta) =  \frac{1}{q} \ord_p(C) > \frac{1}{1-q}$.  For $j = 1, \dots, q-1$, the coefficient of $x^j$ in $(x+\theta)^q$ is in the maximal ideal $\gm_M$.  Indeed, 
$$ 
\textstyle{\ord_p\left(\binom{q}{j} \theta^j\right)  \ge 1 + j \ord_p(\theta) \ge 1 + (q-1)\ord_p(\theta)  > 0}.
$$
Thus, $g(x) = x^q - x + h(x)$, where $h(x)$ is in $\gm_M[x]$ and $h(0) = 0$.

By Hensel's Lemma, every element of $\Mu_{q-1}$ can be refined to a root of $g$ in $M$, so $L = M$.  Furthermore, for $\sigma$ in $\Gal(L/F)$, the root $\sigma(\theta)$ of $f$ has the form $\sigma(\theta) = \theta + \zeta_\sigma + \rO(\gm_L)$, with $\zeta_1 = 0$ if $\sigma = 1$ is the identity and $\zeta_\sigma$ in $\Mu_{q-1}$ otherwise.  Pass to the residue field, to obtain an injective homomorphism $\Gal(L/F) \to \F_q$ given by 
$
\sigma \mapsto \zeta_\sigma \pmod{\gm_L}.
$  
Hence $\Gal(L/F)$ is an elementary abelian $p$-group.

Since $u$ is a unit in the unramified ring $\cO_K$ with perfect residue field, there is a unit $u_1$ in $\cO_K$ satisfying $u_1^{p^m} \equiv u \pmod{p}$.  Set $t = \theta^{p^{n-m}} + u_1 w^{-1}$.   We claim that $t^{p^m} = \epsilon \theta$ for some unit $\epsilon$ in $\cO_L$, thanks to the following estimates.
Note first that  
$$
\ord_p\left(\theta^{p^{n-m}}\right) = p^{-m} \ord_p(C)= -\ord_p(w).
$$
If $m >0$ and $1 \le j \le p^m-1$, we have
$$
\textstyle{\ord_p\left(\binom{p^m}{j \,} (\theta^{p^{n-m}})^j w^{-(p^m-j)}\right) \ge 1- p^m \ord_p(w) = 1 + \ord_p(C)} > \ord_p(\theta), 
$$
where last inequality follows from our lower bound on $\ord_p(C)$.  Hence
$$
t^{p^m} = \theta^{q} + (u_1 w^{-1})^{p^m} + \lambda \theta =  \theta - C + (u_1 w^{-1})^{p^m} + \lambda \theta = \theta + \frac{u_1^{p^m} - u}{w^{p^m}} + \lambda \theta   
$$
for some $\lambda \in \gm_L$.  In addition, 
$$
\textstyle{\ord_p\left((u_1^{p^m} - u)w^{-p^m}\right) \ge \ord_p\left(p w^{-p^m} \right) = 1 +  \ord_p(C) >  \ord_p(\theta)}, 
$$
so $t^{p^m} =  \epsilon \theta$ for some unit $\epsilon$ in $L$, as claimed.  If $m = 0$ take $u_1 = u$, so $t = \theta$. Thus, 
$$
\ord_p(t) = p^{-m} \ord_p(\theta) = - q^{-1} \ord_p(w) \, \text{ for all } m.
$$
Since $p \nmid \ord_p(w)$, the ramification index of $L/F$ is a multiple of $q$.  But $\fdeg{L}{F} \le q$, so $L/F$ is totally ramified of degree $q$ and $L = F(t)$.   Similar estimates of interior terms give 
$$
\sigma(\theta^{p^{n-m}}) = (\theta + \zeta_\sigma+ \rO(\gm_L))^{p^{n-m}} = \theta^{p^{n-m}} + \zeta_\sigma^{p^{n-m}} + \rO(\gm_L).
$$
Then $\sigma(t) - t  = \zeta_\sigma^{p^{n-m}} + \rO(\gm_L)$, since $u_1w^{-1}$ is in $F$.  If $e_L$ is the ramification index of $L/\Q_p$ and $v_L$ is the valuation on $L$ satisfying $v_L(p) = e_L$, we find that
$$
v_L(t) = e_L \ord_p(t) = q e_F \ord_p(t) = -e_F \ord_p(w) = -v_F(w)
$$ 
is prime to $p$.  Hence $\gf = v_F(w)+1$ by \cite[Prop. C.5]{BK5}. 
\end{proof}

\begin{Rem}
If $f(x) = x^q -x+C$, with $C$ in $\cO_F$, then Artin-Schreier theory over $k_F$ implies that the splitting field $L$ of $f(x)$ is unramified over $F$, of degree dividing $p$.  Indeed, as in the proof above, the map $\Gal(L/F) \to \F_q$ is injective, so $\Gal(L/F)$ is an elementary $p$-group and it has order at most $p$, since unramified extensions are cyclic.  In particular, if $\ord_p(C) > 0$, then the elements of $\Mu_{q-1}$ lead to roots of $f$ by Hensel's Lemma, so $L = F$.
\end{Rem}

\end{document}